\DeclareSymbolFontAlphabet{\mathbb}{AMSb}
\DeclareSymbolFontAlphabet{\mathbbl}{bbold}
\def\Sp{\mathsf{Sp}}
\def\O{\mathsf{O}}
\def\SO{\mathsf{SO}}
\def\K{\mathbb{K}}
\definecolor{darkred}{rgb}{0.7,0,0} 
\newcommand{\defn}[1]{{\color{darkred}\emph{#1}}} 
\numberwithin{equation}{section}
\theoremstyle{definition}
\newtheorem* {theorem*}{Theorem}
\newtheorem* {proposition*}{Proposition}
\newtheorem* {corollary*}{Corollary}
\newtheorem* {conjecture*}{Conjecture}
\newtheorem{theorem}{Theorem}[section]
\theoremstyle{definition}
\newtheorem* {example*}{Example}
\newtheorem{lemma}[theorem]{Lemma}
\theoremstyle{definition}
\newtheorem{definition}[theorem]{Definition}
\theoremstyle{definition}
\newtheorem{proposition}[theorem]{Proposition}
\newtheorem{corollary}[theorem]{Corollary}
\newtheorem{remark}[theorem]{Remark}
\newtheorem*{remark*}{Remark}
\theoremstyle{definition}
\theoremstyle{definition}
\theoremstyle{definition}
\theoremstyle{definition}
\def\({\left(}
\def\){\right)}
\newcommand{\CC}{\mathbb{C}}
\def\Hom{\mathrm{Hom}}
\def\CC{\mathbb{C}}
\def\ZZ{\mathbb{Z}}
\def\GL{\textsf{GL}}
\def\SL{\textsf{SL}}
\def\barr{\begin{array}}
\def\earr{\end{array}}
\def\ba{\begin{aligned}}
\def\ea{\end{aligned}}
\def\be{\begin{equation}}
\def\ee{\end{equation}}
\def\quand{\quad\text{and}\quad}
\def\ben{\begin{enumerate}}
\def\een{\end{enumerate}}
\def\bei{\begin{itemize}}
\def\eei{\end{itemize}}
\def\Perm{\textsf{Perm}}
\def\path{\mathsf{path}}
\def\whSym
\def\whQSym
\def\NN{\mathbb{N}}
\def\bar{\overline}
\def\diag{\textsf{diag}}
\lstdefinelanguage{Sage}[]{Python}
{morekeywords={False,sage,True},sensitive=true}
\definecolor{dblackcolor}{rgb}{0.0,0.0,0.0}
\definecolor{dbluecolor}{rgb}{0.01,0.02,0.7}
\definecolor{dgreencolor}{rgb}{0.2,0.4,0.0}
\definecolor{dgraycolor}{rgb}{0.30,0.3,0.30}
\def\CCCt{\CC(\hspace{-0.5mm}(t)\hspace{-0.5mm})}
\def\KLt{\mathbb{K}(\hspace{-0.5mm}(t)\hspace{-0.5mm})}
\def\KPt{\mathbb{K}[\hspace{-0.5mm}[t]\hspace{-0.5mm}]}
\newcommand{\ord}{\operatorname{ord}}
\def\affinvol{\textsf{SymAPM}}
\def\eaffinvol{\textsf{eSymAPM}}
\def\IIfpf{ \widetilde{\mathcal{I}}_{2n}^{\text{fpf}}}
\def\skewsym{\textsf{SkewAPM}}
\begin{document}
\title{Orthogonal and symplectic orbits in the affine flag variety of type A}

\author{
Kam Hung TONG\thanks{
Department of Mathematics, Hong Kong University of Science and Technology, {\tt khtongad@connect.ust.hk}.
}
}
\date{}

\maketitle

\begin{abstract}
It is a classical result that the set $K\backslash G /B$ is finite, where $G$ is a reductive algebraic group over an algebraically closed field with characteristic not equal to two, $B$ is a Borel subgroup of $G$, and $K = G^{\theta}$ is the fixed point subgroup of an involution of $G$. In this paper, we investigate the affine counterpart of the aforementioned set, where $G$ is the general linear group over formal Laurent series, $B$ is an Iwahori subgroup of $G$, and $K$ is either the orthogonal group or the symplectic group over formal Laurent series.
We construct explicit bijections between the double cosets $K \backslash G/B$ and certain twisted affine involutions. This is the first combinatorial description of $K$-orbits in the affine flag variety of type A.
\end{abstract}

\setcounter{tocdepth}{2}
\tableofcontents

\section{Introduction}
\subsection{Classical background}
Let $G$ be a connected reductive algebraic group over the field of complex numbers $\CC$, and let $B \subset G$ be a Borel subgroup of $G$. Then it is a classical result that the set $B \backslash G/B$ is finite, with a distinct set of double coset representatives forming the Weyl group $W$ \cite[Chapter 27]{Bump}. Thus, the \defn{Bruhat decomposition} can be written as 
\[ G = \bigsqcup_{w \in W} BwB. \]

Now let $\theta = \theta^{-1}$ be a holomorphic involution of $G$. Let $K=G^{\theta} = \{g \in G: \theta(g) = g\}$ be the corresponding fixed point subgroup. Then it is again a classical result that the set $K\backslash G/B$ is finite \cite{Matsuki} (see also \cite{Aomoto, Wolf}). The set $K\backslash G/B$ is treated either as a set of $K$-orbits in the \defn{flag variety} $G/B$, or as $B$-orbits in the \defn{symmetric variety} $K\backslash G$ \cite{Richardson90}, or simply as $(B \times K)$-double cosets in $G$.

The classification of $K\backslash G/B$ when $G$ is any classical linear group of any Lie type is well known \cite{MO}. Denote $\ZZ_{>0} = \{1, 2, 3, \dots \}$ and fix $n \in \ZZ_{>0}$. We focus here on the case when $G=\GL_n(\CC)$. Denote $1_n$ to be the $n$-by-$n$ identity matrix. Then there are only three types of involutions $\theta$ up to conjugacy \cite[Chapter 5, \S1.5]{onishchik2012lie}, namely $\theta(g) = (g^T)^{-1}$, $\theta(g) = (-Jg^TJ)^{-1}$ with $n$ is even and $J = \begin{psmallmatrix} 0 & 1_{n/2} \\ -1_{n/2} & 0\end{psmallmatrix}$, and 
\[\theta(g) = \begin{psmallmatrix} 1_p & 0 \\ 0 & -1_q \end{psmallmatrix} g \begin{psmallmatrix} 1_p & 0 \\ 0 & -1_q \end{psmallmatrix}\] 
with $n = p+q$ for nonnegative integers $p$ and $q$. The corresponding fixed point subgroups $K = G^\theta$ are the orthogonal group $\O_n(\CC)$, the symplectic group $\Sp_n(\CC)$, and the product group 
\[\GL_p(\CC) \times \GL_q(\CC) = \left\{\begin{psmallmatrix} k_{1} & 0 \\ 0 & k_{2} \end{psmallmatrix} : k_{1} \in \GL_p(\CC), k_{2} \in \GL_q(\CC)\right\}\] respectively.
The $K$-orbits of the flag variety $G/B$ in these three cases are in bijection with sets of (signed) involutions in the symmetric group $S_n$ \cite{MO}.
More precisely, the $K$-orbits are in bijection with involutions in $S_n$ for $K = \O_n(\CC)$, fixed-point-free involutions in $S_n$ for $K = \Sp_n(\CC)$ \cite{Richardson90}, and certain signed involutions called \defn{$(p,q)$-clans} for $K = \GL_p(\CC) \times \GL_q(\CC)$ \cite{Yamamoto}.

The earliest record of studying $K$-orbits in the flag variety is probably due to Gelfand and Graev \cite{GelfandGraev}. They showed that one can construct new irreducible (unitary) representations or discrete series representations of non-compact real forms of $G$ from spaces of functions on these $K$-orbits \cite{Graev}.

In the 1980s, Matsuki and Ōshima \cite{MO} gave a concrete combinatorial description of $K\backslash G/B$ for complex classical Lie groups $G$ in terms of \defn{clans}, but without giving a detailed proof. Richardson and Springer \cite{Richardson90} and Yamamoto \cite{Yamamoto} later gave detailed proofs for the three cases of $K$ with $G = \GL_n(\CC)$. Yamamoto \cite{Yamamoto} also considered some of the cases in type B, C, D. A useful overview of the $K$-orbit classifications in these cases are provided in the Ph.D. thesis of Wyser \cite{Wyser}.

These studies of $K$-orbits lead to applications in the representation theory of real groups and other topics. Richardson and Springer considered the weak order of the closure of such orbits \cite{Richardson90, RS93, RS94}. Fulton related these results to Schubert calculus \cite{Fulton96a, Fulton92, Fulton96b}, and Graham \cite{Graham} and Wyser \cite{Wyser} related Fulton's work to certain torus-equivariant cohomologies of the flag variety. 
In recent years, Can, Joyce, and Wyser \cite{CJW} studied the maximal chains in the weak order poset for the three types of $K$-orbits when $G = \GL_n(\CC)$ and specifically described a formula for Schubert classes. More applications can be found in \cite{colarusso2014gelfand,  mcgovern2009closures, mcgovern2009pattern, wyser2014polynomials, wyser2017polynomials}, among many others. 

\subsection{Affine analogs}
This paper is about affine generalizations of the $K$-orbits above in the following sense.

One can consider affine analogs of the $B$-orbits and $K$-orbits of the flag varieties, and consider their applications in cohomology of the affine flag variety \cite{lam2021back}. 
Let $\mathbb{K}$ be a quadratically closed field, i.e. a field of characteristic not equal to 2 in which every element has a square root.
Let $\KLt$ be the field of formal Laurent series in $t$ consisting of all the formal sums $\sum^{\infty}_{i\geq N} a_i t^i$, in which $N \in \ZZ$ and $a_i \in \K$ for $i \geq N$. Let $\KPt$ be the ring of formal power series consisting of all the formal sums $\sum^{\infty}_{i\geq 0} a_i t^i$, in which $a_i \in \K$ for $i \geq 0$. 

We redefine $G = \GL_n(\KLt)$ to be the group of invertible $n$-by-$n$ matrices over $\KLt$ and redefine $B$ to be the subgroup consisting of all upper triangular modulo $t$ matrices in $\GL_n(\KPt)$, that is, invertible matrices with entries in $\KPt$ that become upper triangular if we set $t = 0$ for these matrices. Then $G$ is the \defn{(algebraic) loop group} of $\GL_n(\K)$ and $B$ is an \defn{Iwahori subgroup}. In this setting, the \defn{affine Bruhat decomposition} is written as 
\[G =\bigsqcup_{w \in \widetilde W} BwB,\]
 where $\widetilde{W}$ is the \defn{affine Weyl group} of $G$, which is isomorphic to a semidirect product of the symmetric group $S_n$ of permutations of $n$ elements and $\ZZ^n$ of $n$-tuples of integers. More details on the affine Weyl group of $\GL_n(\KLt)$ are given in Section~\ref{chpt2-ref}. The set of cosets $G/B$ is often called the \defn{affine flag variety}, and has been studied in \cite{lam2021back, lee2019combinatorial, Nadler04}, for example.
 
Most constructions related to orbits in flag varieties have affine analogs \cite{lam2021back, lee2019combinatorial, Nadler04}. However, the subject of $K$-orbits in the affine flag variety is mostly unexplored in the literature, with the important exception of the Ph.D. thesis of Mann \cite{Mann}. 
Mann's work, which we review in Section~\ref{mann-thesis-ref}, gives a type-independent classification of the $K$-orbits in $G/B$ in terms of certain conjugacy classes of triples $(H, B, \mu)$.
However, this general classification is non-constructive and its computation is complicated even for matrix groups with small dimensions. By contrast, the results in this paper provide explicit combinatorial descriptions of $K$-orbits in the affine flag variety of type A. It is not straightforward to obtain our results as special cases of Mann's theorem.
The results of this paper concern the $K$-orbits in the affine flag variety $G/B$, where $G = \GL_n(\KLt)$ is the (algebraic) loop group of $\GL_n(\K)$ and $B$ is the Iwahori subgroup as in the setting of affine Bruhat decomposition. In type A, the loop group analogs of $K$ are given by $\O_n(\KLt)$ and $\Sp_n(\KLt)$. We also consider the $\SO_n(\KLt)$-orbits in the affine flag variety of $\SL_n(\KLt)$. The following three subsections describe our main theorems in these three cases. A separate paper will treat the case where $K = \GL_p(\KLt) \times \GL_q(\KLt)$.

\begin{remark}
In this paper, by orbits in affine flag variety we mean $K$-orbits in $G/B$. However, it is sometimes more convenient to consider the $B$-orbits in $K \backslash G$ or the $(K,B)$-double cosets in $G$, which are the orbits for the obvious action of $K \times B$ on $G$. It is clear that there are canonical bijections between these three kinds of orbits, but most of our results are phrased in terms of $(K,B)$-double cosets.
\end{remark}

\subsection{Orbits of the orthogonal group}
Continue to let $G = \GL_n(\KLt)$. 
In this subsection we state our first main theorem classifying the orbits of 
\[K = \O_n(\KLt) = \{g \in G : g^Tg = 1_n\}\]
 in the affine flag variety $G/B$. Here the involution $\theta$ on $G$ is defined as $\theta(g) = (g^T)^{-1}$ and $K = G^{\theta}$. 

Recall that a \defn{monomial matrix} is a matrix with only one non-zero entry in each row and column.
We define an \defn{affine permutation matrix} to be an $n$-by-$n$ monomial matrix with integral powers of $t$ as non-zero entries. 
Below we define two subsets of the set of affine permutation matrices.

\begin{definition}\label{def-extended-aff-twisted-inv-1}
Define $\affinvol_n$ to be the set of all symmetric $n$-by-$n$ affine permutation matrices. Define $\eaffinvol_n$ to be the set of elements in $\affinvol_n$ for which  the sum of the powers of $t$ is even.
\end{definition}
The following are three affine permutation matrices:
\[ \begin{pmatrix}
0 & 0 & t^2 & 0 & 0 & 0\\
t^{1} & 0 & 0 & 0 & 0 & 0\\
0 & t^{-1} & 0 & 0 & 0 & 0\\
0 & 0 & 0 & 0 & 0 & t^3\\
0 & 0 & 0 & 0 & t^{-2} & 0\\
0 & 0 & 0 & t^0 & 0 & 0
\end{pmatrix}, \,
\begin{pmatrix}
t& 0 & 0 & 0 & 0\\
0 & 0 & 0 & t^{-2} & 0\\
0 & 0 & t^{-5} & 0 & 0\\
0 & t^{-2} & 0 & 0 & 0\\
0 & 0 & 0 & 0 & t^3
\end{pmatrix}, \,
 \begin{pmatrix}
t^4& 0 & 0 & 0 & 0\\
0 & 0 & 0 & t^{-2} & 0\\
0 & 0 & t^{-5} & 0 & 0\\
0 & t^{-2} & 0 & 0 & 0\\
0 & 0 & 0 & 0 & t^3
\end{pmatrix}.
\]
Only the second and the third matrices are in $\affinvol_5$, while only the third matrix is in $\eaffinvol_5$.

The main theorem for the case $K = \O_n(\KLt)$ is the following:

\begin{theorem}\label{ortho-orbit-thm-intro}
In the case where $K = \O_n(\KLt)$ and $G = \GL_n(\KLt)$,
for each double coset $\mathcal{O} \in  K\backslash G/B$, there exists a unique $w \in \eaffinvol_n$ such that $g^Tg = w$ for some $g \in \mathcal{O}$.
Moreover, for each $w \in \eaffinvol_n$, the set of matrices $g$ satisfying $g^Tg = w$ is non-empty and its elements lie in the same double coset.
\end{theorem}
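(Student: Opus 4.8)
The plan is to build the correspondence from the map $\phi\colon g\mapsto g^{T}g$ and to split the theorem into the following pieces.

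\textbf{Invariance of $\phi$.} If $g'=kgb$ with $k\in K$ and $b\in B$, then $k^{T}k=1_{n}$ gives $(g')^{T}g'=b^{T}(g^{T}g)b$. Thus $\phi$ sends a double coset into a single orbit of symmetric invertible matrices under the congruence action $S\mapsto b^{T}Sb$ of $B$; this action preserves symmetry, so any affine permutation matrix appearing in such an orbit automatically lies in $\affinvol_{n}$. Moreover, if $w$ is an affine permutation matrix with support permutation $\sigma$ and nonzero entries $t^{a_{1}},\dots,t^{a_{n}}$, then $\det w=\sgn(\sigma)\prod_{i}t^{a_{i}}=\pm t^{N}$ where $N=\sum_{i}a_{i}$ is the sum of the powers of $t$; and when $w=g^{T}g$ we get $\det w=\det(g)^{2}$, whose $t$-adic valuation is even, so $N$ is even and $w\in\eaffinvol_{n}$.

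\textbf{A normal form lemma.} The technical core is the claim that every symmetric invertible matrix $S$ over $\KLt$ is $B$-congruent to a \emph{unique} affine permutation matrix $w$ (necessarily in $\affinvol_{n}$ by symmetry). For existence I would run an explicit symmetric elimination: since $\K$ is quadratically closed, $\KPt^{\times}=(\KPt^{\times})^{2}$, so pivots can be rescaled to pure powers of $t$, and $B$ provides exactly the ``upper-triangular-mod-$t$'' transvections needed to clear the off-pivot entries by simultaneous row and column operations; carrying out this Gram--Schmidt-type process while tracking $t$-adic valuations yields a monomial matrix. For uniqueness, suppose $w_{1},w_{2}\in\affinvol_{n}$ with $w_{2}=b^{T}w_{1}b$, $b\in B$. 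Then $w_{2}\in\bar B w_{1}B$, where $\bar B:=B^{T}$ is the Iwahori subgroup obtained as the image of $B$ under the automorphism $g\mapsto(g^{T})^{-1}$; by the affine Bruhat decomposition applied to the pair $(\bar B,B)$, equality of double cosets $\bar B w_{1}B=\bar B w_{2}B$ forces $w_{1}=w_{2}d$ for some $d\in T(\KPt)$, and comparing entries (pure powers of $t$) forces $d=1_{n}$, so $w_{1}=w_{2}$. Granting this lemma, the first assertion follows: for any double coset $\mathcal{O}$ choose $g_{0}\in\mathcal{O}$ and $b\in B$ with $b^{T}(g_{0}^{T}g_{0})b\in\affinvol_{n}$; then $g_{0}b\in\mathcal{O}$ has $(g_{0}b)^{T}(g_{0}b)\in\eaffinvol_{n}$ by the determinant remark, and if $g_{1},g_{2}\in\mathcal{O}$ with $g_{i}^{T}g_{i}=w_{i}\in\eaffinvol_{n}$, writing $g_{2}=kg_{1}b$ gives $w_{2}=b^{T}w_{1}b$, so $w_{1}=w_{2}$ by the uniqueness half.

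\textbf{The ``moreover'' clause.} First I show $\phi^{-1}(w)\neq\emptyset$ for $w\in\eaffinvol_{n}$. Up to a simultaneous permutation of rows and columns, $w$ is block-diagonal with $1\times1$ blocks $(t^{a})$ — one per fixed point of the underlying involution — and $2\times2$ blocks $\begin{psmallmatrix}0&t^{a}\\t^{a}&0\end{psmallmatrix}$ — one per $2$-cycle. Each $2$-cycle block equals $g_{0}^{T}g_{0}$ for an explicit $g_{0}\in\GL_{2}(\KLt)$, and each even-exponent block $(t^{2c})$ equals $(t^{c})^{T}(t^{c})$; since $N$ is even, the odd-exponent $1\times1$ blocks come in pairs, and $\begin{psmallmatrix}t^{a}&0\\0&t^{a'}\end{psmallmatrix}=g_{0}^{T}g_{0}$ for an explicit $g_{0}$ whenever $a+a'$ is even, using that $-1$ is a square in $\K$. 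Reassembling the block solutions and conjugating back by the permutation matrix gives $g$ with $g^{T}g=w$. Finally, if $g_{1}^{T}g_{1}=g_{2}^{T}g_{2}=w$, then $k:=g_{1}g_{2}^{-1}$ satisfies $k^{T}k=(g_{2}^{T})^{-1}(g_{1}^{T}g_{1})g_{2}^{-1}=(g_{2}^{T})^{-1}(g_{2}^{T}g_{2})g_{2}^{-1}=1_{n}$, so $k\in K$ and $g_{1}=kg_{2}$; hence $\phi^{-1}(w)$ is a single left $K$-coset and in particular lies in one double coset.

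\textbf{Main obstacle.} I expect the reduction algorithm proving the existence half of the normal form lemma to be the crux: one must organize the symmetric elimination so that only operations from the Iwahori $B$ are used, so that it terminates at a monomial matrix, and so that the output does not depend on the choices made, all while managing the interaction of the ambient $t$-adic valuations with the ``modulo $t$'' triangularity constraint. The remaining ingredients are either formal (invariance of $\phi$, the parity count, the fibre computation) or short explicit constructions (surjectivity onto $\eaffinvol_{n}$, and uniqueness via the affine Bruhat decomposition).
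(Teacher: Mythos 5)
Your proposal is correct and follows essentially the same route as the paper: the orbit-stabilizer bijection of $K\backslash G$ onto $\{g^Tg : g\in G\}$, the square-determinant criterion to land in $\eaffinvol_n$, a symmetric $B$-congruence elimination to a monomial normal form, uniqueness via the opposite-Iwahori (Birkhoff/Bruhat) decomposition, and explicit $2\times 2$ square-root blocks for surjectivity and the single-$K$-coset fibre. You rightly flag the $B$-congruence reduction as the crux; the paper carries it out by a detailed $2\times 2$ case analysis (Proposition~\ref{orthogonaln=2}) fed into an induction on the matrix size, with the uniqueness step packaged as Lemma~\ref{uniq-orbit-ortho}.
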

This theorem is non-constructive but
for each $w \in \eaffinvol_n$, we provide an explicit formula for a matrix $g_w \in G$ such that $g_w^Tg_w = w$. See Definition~\ref{def-gw-ortho} and \eqref{sqrt-ortho-affineperm} for more details.

In terms of this notation, the above theorem implies the following corollary.
\begin{corollary}
The map $w \mapsto Kg_wB$ is a bijection between $\eaffinvol_n$ and $K\backslash G/B$. 
\end{corollary}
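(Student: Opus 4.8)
The plan is to deduce the corollary directly from Theorem~\ref{ortho-orbit-thm-intro} together with the explicit square-root construction $g_w$. First I would record the two halves of the theorem as the two properties we need. The theorem says: (i) for every double coset $\mathcal{O} \in K\backslash G/B$ there is a unique $w \in \eaffinvol_n$ with $g^Tg = w$ for some $g \in \mathcal{O}$; and (ii) for every $w \in \eaffinvol_n$ the fiber $\{g \in G : g^Tg = w\}$ is nonempty and contained in a single double coset. Define the map $\Phi \colon \eaffinvol_n \to K\backslash G/B$ by $\Phi(w) = Kg_wB$, where $g_w$ is the matrix of Definition~\ref{def-gw-ortho} satisfying $g_w^Tg_w = w$; this is well-defined since $g_w$ is a specific element of $G$.

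Next I would check injectivity of $\Phi$. Suppose $\Phi(w) = \Phi(w')$, i.e.\ $g_w$ and $g_{w'}$ lie in the same double coset $\mathcal{O}$. Then $g_w^T g_w = w$ and $g_{w'}^T g_{w'} = w'$ are both elements of $\eaffinvol_n$ realized as $g^Tg$ for some $g \in \mathcal{O}$; by the uniqueness clause (i) applied to $\mathcal{O}$, we get $w = w'$.

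Then I would check surjectivity of $\Phi$. Let $\mathcal{O} \in K\backslash G/B$ be arbitrary. By (i) there is $w \in \eaffinvol_n$ and $g \in \mathcal{O}$ with $g^Tg = w$. Now $g_w$ also satisfies $g_w^Tg_w = w$, so $g$ and $g_w$ lie in the common fiber $\{h \in G : h^Th = w\}$, which by (ii) is contained in a single double coset; hence $g_w$ lies in the same double coset as $g$, namely $\mathcal{O}$. Therefore $\Phi(w) = Kg_wB = \mathcal{O}$.

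Since $\Phi$ is both injective and surjective, it is a bijection, which is exactly the assertion of the corollary. In this argument essentially all the content is imported from Theorem~\ref{ortho-orbit-thm-intro} and from the fact that $g_w^Tg_w = w$; the only mild subtlety — really just a bookkeeping point rather than an obstacle — is making sure one invokes the uniqueness clause with the right double coset when proving injectivity, and the single-coset clause with the right fiber when proving surjectivity. No step requires any genuine computation here, so there is no serious hard part: the entire difficulty has been front-loaded into the theorem and the explicit construction of $g_w$.
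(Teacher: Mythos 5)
Your proof is correct and matches the paper's intent: the paper presents this corollary as an immediate consequence of Theorem~\ref{ortho-orbit-thm-intro} together with the identity $g_w^Tg_w = w$ from Definition~\ref{def-gw-ortho} and \eqref{sqrt-ortho-affineperm}, and you have simply spelled out the injectivity and surjectivity bookkeeping that the paper leaves implicit.
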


The proof of Theorem~\ref{ortho-orbit-thm-intro} appears in Section~\ref{sect3.1-ref}.

\subsection{Orbits of the special orthogonal group}
For any commutative ring $R$, denote $\SL_n(R)$ to be the special linear group over $R$. In this subsection we let $G$ be the group $\SL_n(\KLt)$ and $B$ be the Iwahori subgroup consists of upper triangular matrices modulo $t$ in $\SL_n(\KPt)$.

Now we state a theorem classifying the orbits of 
\[K = \SO_n(\KLt) = \{g \in \SL_n(\KLt) : g^Tg = 1_n\}\]
 in the affine flag variety $G/B$. Here $K = G^\theta$ for $\theta(g) = (g^T)^{-1}$.
 
Below, we give the definition of the indexing set for this case.

\begin{definition}\label{def-aff-tw-invol}
Define $i\affinvol_n \subset \SL_n(\KLt)$ to be the set of symmetric monomial matrices such that 
\ben
\item[(i)]if there are any non-zero entries on the diagonal, these diagonal entries are of the form $t^a$, and the non-diagonal non-zero entries are of the form $i = \sqrt{-1}$ times $t^a$, where $a \in \ZZ$.
\item[(ii)]if there are no non-zero entries on the diagonal, then the non-zero entries in the first row and first column are of the form $\pm it^a$, while the remaining non-zero entries are of the form $it^a$, where $a \in \ZZ$.
\een
\end{definition}

The following are three matrices in either $i\affinvol_5$ or $i\affinvol_4$:
\[\begin{pmatrix}
t^6& 0 & 0 & 0 & 0\\
0 & 0 & 0 & it^{-2} & 0\\
0 & 0 & t^{-5} & 0 & 0\\
0 & it^{-2} & 0 & 0 & 0\\
0 & 0 & 0 & 0 & t^3
\end{pmatrix}, \,
 \begin{pmatrix}
0 & 0 & 0 & it^{2} \\
0 & 0 & it^{-2} & 0 \\
0 & it^{-2} & 0 & 0 \\
it^2& 0 & 0 & 0 
\end{pmatrix} , \,
 \begin{pmatrix}
0 & 0 & 0 & -it^{2} \\
0 & 0 & it^{-2} & 0 \\
0 & it^{-2} & 0 & 0 \\
-it^2& 0 & 0 & 0 
\end{pmatrix}.
\]

The following is the main theorem in this subsection:
\begin{theorem}\label{sortho-orbit-thm-intro}
In the case where $K = \SO_n(\KLt)$ and $G = \SL_n(\KLt)$,
for each double coset $\mathcal{O} \in  K\backslash G/B$, there exists a unique $w \in i\affinvol_n$ such that $g^Tg = w$ for some $g \in \mathcal{O}$.
Moreover, for each $w \in i\affinvol_n$, the set of matrices $g$ satisfying $g^Tg = w$ is non-empty and its elements lie in the same double coset.
\end{theorem}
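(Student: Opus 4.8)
The plan is to deduce Theorem~\ref{sortho-orbit-thm-intro} from Theorem~\ref{ortho-orbit-thm-intro} through the map $\tau\colon g\mapsto g^{T}g$. Write $B_{\GL}$ for the Iwahori subgroup of $\GL_n(\KLt)$ of Theorem~\ref{ortho-orbit-thm-intro} and $B_{\SL}=B_{\GL}\cap\SL_n(\KLt)$ for the Iwahori subgroup of $\SL_n(\KLt)$ in the present statement. Two remarks are immediate. If $g'=kgb$ with $k\in\SO_n(\KLt)$ and $b\in B_{\SL}$, then $\tau(g')=b^{T}\tau(g)b$. Conversely, if $g,g'\in\SL_n(\KLt)$ and $\tau(g)=\tau(g')$, then $h:=g(g')^{-1}$ satisfies $h^{T}h=1_n$ and $\det h=1$, so $h\in\SO_n(\KLt)$ and $g,g'$ lie in a single left $\SO_n(\KLt)$-coset. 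Hence every fibre $\tau^{-1}(w)$ lies in one double coset, and $g,g'$ lie in the same $(\SO_n(\KLt),B_{\SL})$-double coset if and only if $\tau(g)$ and $\tau(g')$ are $B_{\SL}$-congruent, meaning congruent under $s\mapsto b^{T}sb$ for some $b\in B_{\SL}$. Thus the theorem is equivalent to: the $B_{\SL}$-congruence classes of $\{g^{T}g:g\in\SL_n(\KLt)\}$ are in bijection with $i\affinvol_n$, each class meeting $i\affinvol_n$ in exactly one point.

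Next, nonemptiness. Given $w\in i\affinvol_n$, let $\bar w$ be the affine permutation matrix obtained from $w$ by deleting every factor of $i$ and every sign, so $w=d^{T}\bar w\,d$ for a diagonal $d$ with entries in $\{\pm1,\pm i\}$. One computes $\det w=t^{N}$, where $N$ is the sum of the $t$-powers of $\bar w$; since $\det w=1$ we get $N=0$, so $\bar w\in\eaffinvol_n$. By Theorem~\ref{ortho-orbit-thm-intro} (concretely, by the matrix of Definition~\ref{def-gw-ortho}) there is $g_{0}$ with $g_{0}^{T}g_{0}=\bar w$; then $(g_{0}d)^{T}(g_{0}d)=w$ and $(\det(g_{0}d))^{2}=\det w=1$, so after possibly replacing $g_{0}$ by $\diag(-1,1,\dots,1)g_{0}$ we get $g_{0}d\in\SL_n(\KLt)$. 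With the first paragraph this proves the second sentence of the theorem.

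Now the first sentence. Fix a double coset, a representative $g$, and put $s=g^{T}g$. By Theorem~\ref{ortho-orbit-thm-intro} there is a unique $w_{0}\in\eaffinvol_n$ in the $(\O_n(\KLt),B_{\GL})$-double coset of $g$, i.e.\ $s=c^{T}w_{0}c$ with $c\in B_{\GL}$; as $\det s=1$ and $\det c\in\KPt^{\times}$, the sum of $t$-powers of $w_{0}$ is $0$. Rescaling $w_{0}$ by a diagonal matrix with entries in $\{\pm1,\pm i\}$ produces some $w\in i\affinvol_n$ with $\bar w=w_{0}$, and since that diagonal matrix lies in $B_{\GL}$ we get $s=b''^{T}w\,b''$ with $b''\in B_{\GL}$ and $(\det b'')^{2}=1$. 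If $\det b''=-1$, we repair it: if the underlying permutation of $w_{0}$ has a fixed point $j$, the diagonal matrix with entry $-1$ in position $j$ stabilizes $w$ under congruence and has determinant $-1$; if it is fixed-point-free, $\diag(-1,1,\dots,1)$ carries $w$ to the other element $w'\in i\affinvol_n$ with $\bar{w'}=w_{0}$. Either way we obtain $s=b^{T}w\,b$ with $b\in B_{\SL}$ and $w\in i\affinvol_n$, giving existence of a label. For uniqueness: if $w,w'\in i\affinvol_n$ are $B_{\SL}$-congruent they are $B_{\GL}$-congruent, hence $\bar w=\bar{w'}$ by the uniqueness in Theorem~\ref{ortho-orbit-thm-intro}; by construction this forces $w=w'$ unless the underlying permutation $\sigma$ of $\bar w$ is fixed-point-free, in which case $\{w,w'\}=\{w_{+},w_{-}\}$, the two preimages of $\bar w$ under $w\mapsto\bar w$.

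The remaining point, which I expect to be the main obstacle, is that $w_{+}$ and $w_{-}$ are not $B_{\SL}$-congruent when $\sigma$ is fixed-point-free. As $\diag(-1,1,\dots,1)$ sends $w_{+}$ to $w_{-}$ with determinant $-1$, and conjugation by the rescaling matrix identifies the congruence stabilizer of $w_{+}$ with that of $\bar w$, this amounts to showing that $\{b\in B_{\GL}:b^{T}\bar w\,b=\bar w\}$ contains no element of determinant $-1$; equivalently, for a square root $g_{\bar w}$ of $\bar w$, the isotropy group $g_{\bar w}\bigl(\O(\bar w)(\KLt)\cap B_{\GL}\bigr)g_{\bar w}^{-1}$ of $g_{\bar w}B_{\GL}$ in $\O_n(\KLt)$ lies in $\SO_n(\KLt)$, where $\O(\bar w)$ is the orthogonal group of the form with Gram matrix $\bar w$. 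Since $\sigma$ is fixed-point-free, $\bar w$ is a sum of hyperbolic planes and $\O(\bar w)$ is an even split orthogonal group; the content is that the Iwahori-type subgroup $\O(\bar w)(\KLt)\cap B_{\GL}$ of isometries preserving the standard lattice flag lies in the special orthogonal subgroup. I would prove this by reducing---via the $B_{\GL}$-stable filtration of $\KPt^{n}$ produced by the standard lattice together with its dual with respect to $\bar w$---to the case in which $\bar w$ has all nonzero entries equal to $1$, and then passing to leading terms: an upper-triangular isometry $\bar b\in\GL_n(\K)$ of a split even form preserving the standard flag has $\bar b_{kk}\bar b_{\sigma(k)\sigma(k)}=1$ for each transposition $\{k,\sigma(k)\}$ of $\sigma$, so $\det\bar b=1$ and hence $\det b=1$. (The case $n=2$, where the $B_{\GL}$-stabilizer of $\begin{psmallmatrix}0&1\\1&0\end{psmallmatrix}$ is exactly $\{\diag(p,p^{-1}):p\in\KPt^{\times}\}$, is representative, and this is also where I expect the technicalities to concentrate.) By contrast, when $\sigma$ has a fixed point $j$ the coordinate reflection $\diag(1,\dots,-1,\dots,1)$ with $-1$ in position $j$ already lies in $\O(\bar w)(\KLt)\cap B_{\GL}$ and has determinant $-1$, which is exactly why no orbit splitting occurs there and why $i\affinvol_n$ is defined with a single form in that case; combined with nonemptiness, this shows the two $B_{\SL}$-congruence classes arising in the fixed-point-free case are precisely those of $w_{+}$ and $w_{-}$, completing the proof.
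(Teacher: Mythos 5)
Your proposal is correct, and it takes a genuinely different route from the paper's proof. Where you reduce Theorem~\ref{sortho-orbit-thm-intro} to Theorem~\ref{ortho-orbit-thm-intro} via the square map $\tau\colon g\mapsto g^Tg$ --- passing through the $B_{\GL}$-congruence classification, then repairing the determinant with a diagonal matrix in $\{\pm1,\pm i\}^n$ --- the paper runs a parallel, self-contained reduction: it re-does the $2\times 2$ analysis in Proposition~\ref{sorthogonaln=2-gl2} for the $\SL_2$ Iwahori, iterates to get a symmetric monomial matrix of determinant $1$, and then uses the explicit identities \eqref{so-adjust-entry1}--\eqref{so-adjust-entry3} to normalize entries to land in $\{h_w\}\cup\{h_w^\pm\}$. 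Your derivation is shorter and makes the relationship between $\eaffinvol_n$ and $i\affinvol_n$ transparent, at the cost of leaning on Theorem~\ref{ortho-orbit-thm-intro} as a black box; the paper's version is self-contained and gives directly the explicit representatives $g_w,g_w^\pm$. Both approaches hinge on the same key lemma, namely the paper's Lemma~\ref{fpf-aff-invol-two-orbits}: the $B_{\GL}$-congruence stabilizer of a fixed-point-free form $h_w^+$ sits in determinant $1$. Your sketch of that lemma, via the constraint $\bar b_{kk}\bar b_{\sigma(k)\sigma(k)}=1$ on constant terms, is exactly the paper's argument; but note that the ``reduction to $\bar w$ with constant entries'' you invoke is unnecessary and a little delicate (the diagonal matrix of $t$-powers that would scale $\bar w$ does not normalize $B_{\GL}$). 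The paper sidesteps this by noting that $(h_w^+)^{-1}b^Th_w^+=b^{-1}$ is itself in $B_{\GL}$ and reading off constant terms at the diagonal positions paired by $\sigma$ --- you may want to substitute that cleaner version when you flesh out the lemma. Finally, your observation that when $\sigma$ has a fixed point $j$ the coordinate reflection at $j$ lies in the stabilizer with determinant $-1$, explaining the absence of splitting there, is a nice piece of motivation the paper does not spell out.
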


The proof of this theorem appears in Section~\ref{sect3.3-ref}.

For each $w \in i\affinvol_n$, we define explicitly $g_w \in \SL_n(\KLt)$ as a double coset representative satisfying $g_w^Tg_w = w$ by Definition~\ref{def-gw-sortho} and Lemma~\ref{sqrt-sortho-affineperm}. 
The above theorem implies the following corollary.
\begin{corollary}
The map $w \mapsto Kg_wB$ is a bijection between $i\affinvol_n$ and $K\backslash G/B$. 
\end{corollary}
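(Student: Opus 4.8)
The plan is to deduce this corollary as a purely formal consequence of Theorem~\ref{sortho-orbit-thm-intro}, with the only external input being the properties of $g_w$ established in Definition~\ref{def-gw-sortho} and Lemma~\ref{sqrt-sortho-affineperm}. Concretely, Theorem~\ref{sortho-orbit-thm-intro} secretly provides two maps: one sending a double coset $\mathcal{O}\in K\backslash G/B$ to the unique $w\in i\affinvol_n$ with $g^Tg=w$ for some $g\in\mathcal{O}$ (well-defined by the uniqueness clause), and one sending $w\in i\affinvol_n$ to the double coset containing every matrix $h$ with $h^Th=w$ (well-defined and nonempty by the ``moreover'' clause). The first task is to observe that these two maps are mutually inverse, and that $w\mapsto Kg_wB$ coincides with the second map: since $g_w\in\SL_n(\KLt)$ and $g_w^Tg_w=w$ by Definition~\ref{def-gw-sortho} and Lemma~\ref{sqrt-sortho-affineperm}, the matrix $g_w$ is one of the matrices $h$ with $h^Th=w$, so $Kg_wB$ is exactly the double coset assigned to $w$ by the theorem.

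Granting this, I would verify bijectivity directly. For well-definedness, $g_w\in\SL_n(\KLt)=G$ means $Kg_wB$ is a genuine element of $K\backslash G/B$. For surjectivity, given $\mathcal{O}\in K\backslash G/B$, Theorem~\ref{sortho-orbit-thm-intro} yields $w\in i\affinvol_n$ and $g\in\mathcal{O}$ with $g^Tg=w$; since $g_w^Tg_w=w$ as well, the ``moreover'' clause (all matrices with the same $h^Th$ lie in one double coset) forces $g$ and $g_w$ into the same coset, i.e. $Kg_wB=\mathcal{O}$. For injectivity, if $Kg_wB=Kg_{w'}B=:\mathcal{O}$ with $w,w'\in i\affinvol_n$, then $g_w,g_{w'}\in\mathcal{O}$ with $g_w^Tg_w=w$ and $g_{w'}^Tg_{w'}=w'$, so the uniqueness clause of Theorem~\ref{sortho-orbit-thm-intro} gives $w=w'$.

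I do not expect any real obstacle here, since all the substantive content — existence of a representative with $g^Tg=w$, the single-orbit property, and uniqueness of $w$ — is already packaged into Theorem~\ref{sortho-orbit-thm-intro}. The only point requiring care is the careful use of Definition~\ref{def-gw-sortho} and Lemma~\ref{sqrt-sortho-affineperm}, namely that the explicit $g_w$ genuinely lands in $\SL_n(\KLt)$ (rather than merely $\GL_n(\KLt)$) and satisfies $g_w^Tg_w=w$; once that is in hand the corollary follows in a few lines. A parallel argument gives the analogous corollary for $\eaffinvol_n$ from Theorem~\ref{ortho-orbit-thm-intro}.
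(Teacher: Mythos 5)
Your proof is correct and follows exactly the route the paper intends: the corollary is stated as an immediate consequence of Theorem~\ref{sortho-orbit-thm-intro} together with the explicit element $g_w\in\SL_n(\KLt)$ (from Definition~\ref{def-gw-sortho} and Lemma~\ref{sqrt-sortho-affineperm}) satisfying $g_w^Tg_w=w$, and your unpacking of surjectivity from the ``moreover'' clause and injectivity from the uniqueness clause is the natural formal argument the paper leaves implicit. The only point worth flagging is the notational dance you implicitly perform: the paper indexes $g_v$, $g_v^\pm$ by $v\in\widetilde{\mathcal{I}_n^0}$ and Lemma~\ref{sqrt-sortho-affineperm} reads $g_v^Tg_v=h_v$ (resp.\ $h_v^\pm$), so ``$g_w$ with $g_w^Tg_w=w$ for $w\in i\affinvol_n$'' really means choosing the appropriate $g_v$, $g_v^+$, or $g_v^-$ according to whether $w=h_v$, $h_v^+$, or $h_v^-$, which is how the paper itself uses the notation.
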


\subsection{Orbits of the symplectic group}
In this subsection let $G = \GL_{2n}(\KLt)$. We state our main theorem classifying the orbits of 
\[K = \Sp_{2n}(\KLt) = \{g \in \GL_{2n}(\KLt) : g^TJg = J\}\]
 in the affine flag variety $G/B$, where $J = \begin{psmallmatrix} 0 & 1_n \\ -1_n & 0\end{psmallmatrix}$. Here $K = G^{\theta}$ for $\theta(g) = (-Jg^TJ)^{-1}$. Below we define another subset of $G$ consisting skew-symmetric matrices.
 
 \begin{definition}\label{def-fpf-ex-aff-tw-invol}
The set $\skewsym_{2n}$ consists of all skew-symmetric $2n$-by-$2n$ monomial matrices whose non-zero entries above the diagonal are integral powers of $t$.
\end{definition}

An example of a matrix in $\skewsym_{2n}$ is 
\[ \begin{pmatrix}
0 & t^2& 0 & 0 \\
- t^2 & 0 & 0 & 0 \\
0 & 0 & 0 & t^{-3} \\
0 & 0 & -t^{-3} & 0
\end{pmatrix}.\] 

The main theorem for the case $K = \Sp_{2n}(\KLt)$ is the following:

\begin{theorem}\label{sp-orbit-thm-intro}
In the case where $K = \Sp_{2n}(\KLt)$ and $G = \GL_{2n}(\KLt)$,
for each double coset $\mathcal{O} \in K\backslash G/B$, there exists a unique $w \in \skewsym_{2n}$ such that $g^{-1}Jg =w$ for some $g \in \mathcal{O}$. 
Moreover, for each $w \in \skewsym_{2n}$, the set of matrices $g$ satisfying $g^{-1}Jg = w$ is non-empty and its elements lie in the double coset. 
\end{theorem}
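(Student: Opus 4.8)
The plan is to reduce Theorem~\ref{sp-orbit-thm-intro} to a normal-form statement for skew-symmetric matrices under the Iwahori, exactly parallel to the orthogonal case in Section~\ref{sect3.1-ref}. Since $\theta(g)=(-Jg^TJ)^{-1}$ one has $K=\{g\in G:g^TJg=J\}$, and we record orbits by the skew-symmetric matrix $\Phi(g):=g^TJg$ (the Gram matrix of $J$ pulled back along $g$): it is constant on left $K$-cosets (because $k^TJk=J$ for $k\in K$) and satisfies $\Phi(gb)=b^T\Phi(g)b$ for $b\in B$. Moreover $\Phi^{-1}(w)=\{g:g^TJg=w\}$, when non-empty, is a single left $K$-coset (if $g_0^TJg_0=w=g^TJg$ then $(gg_0^{-1})^TJ(gg_0^{-1})=J$, so $g\in Kg_0$), hence is contained in one double coset, and for any $g_w$ with $\Phi(g_w)=w$ the coset $Kg_wB$ is well defined; and as $g'$ ranges over a fixed $\mathcal{O}=KgB$, the value $\Phi(g')$ ranges over $\{b^T\Phi(g)b:b\in B\}$. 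Since $\KLt$ is a field, every non-degenerate skew-symmetric $2n\times 2n$ matrix over $\KLt$ has a symplectic basis, hence equals $\Phi(g)$ for some $g$, and distinct double cosets yield distinct $B$-congruence orbits of such matrices. Thus Theorem~\ref{sp-orbit-thm-intro}, together with the bijection $\skewsym_{2n}\to K\backslash G/B$, is equivalent to: \emph{every non-degenerate skew-symmetric $2n\times 2n$ matrix $N$ over $\KLt$ is congruent, via some $b\in B$, to a unique element of $\skewsym_{2n}$}. (No parity condition appears, in contrast with the orthogonal case, because $\det w=t^{2\sum a}$ has even $t$-valuation for every $w\in\skewsym_{2n}$.)

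I would prove uniqueness first. Suppose $w,w'\in\skewsym_{2n}$ with $w'=b^Twb$, $b\in B$. Transposing carries $B$ into the opposite Iwahori $B^-:=B^T$ (lower triangular modulo $t$), so $w'\in B^-wB$. A monomial matrix whose nonzero entries are units times integral powers of $t$ factors as $D\cdot v$ with $D\in T(\KPt)\subseteq B^-\cap B$ a constant diagonal matrix and $v=v(w)$ a representative of a class in the affine Weyl group $\widetilde{W}=\ZZ^{2n}\rtimes S_{2n}$; hence $B^-wB=B^-v(w)B$, and likewise $B^-w'B=B^-v(w')B$. The double cosets $\{B^-vB:v\in\widetilde{W}\}$ are pairwise disjoint (from the Bruhat decomposition for the pair $(B,B^-)$, or directly from the shape of the equation $w'=b^Twb$), so $v(w)=v(w')$; that is, $w$ and $w'$ share the same underlying fixed-point-free involution and the same exponents. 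The signs of the entries of an element of $\skewsym_{2n}$ are determined by that involution together with skew-symmetry (a power of $t$ above the diagonal, its negative below), so $w=w'$. In particular $w\mapsto Kg_wB$ is injective.

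For existence, given a non-degenerate skew $N$ over $\KLt$ I would induct on $n$, using only $B$-moves — rescaling a basis vector by a unit of $\KPt$; adding a $\KPt$-multiple of $e_i$ to $e_j$ when $i<j$; and adding a $t\KPt$-multiple of $e_i$ to $e_j$ when $i>j$; but \emph{not} permuting basis vectors — to isolate one symplectic pair $\{v_i,v_j\}$ with $\langle v_i,v_j\rangle=\pm t^a$ and $v_i,v_j$ orthogonal to every other basis vector, leaving a non-degenerate skew form on the span of the remaining $2n-2$ basis vectors, equipped with its induced Iwahori, to which the inductive hypothesis applies. The main obstacle is performing this splitting with legal moves only. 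The naive recipe — pair $e_1$ with the index $j_0$ minimizing $\ord_t N_{1j}$ — is incorrect: if $\langle e_1,e_2\rangle$ has high valuation while $e_2$ is ``captured'' by a low-valuation pairing $\langle e_2,e_3\rangle$, then $e_1$ ends up paired not with $e_2$ but with a corrected vector such as $e_4+e_2$, and determining the correct partner and exponent requires tracking the $t$-adic valuations of all pairings through the lattice chain and verifying that every required coefficient lands in the right ideal ($t\KPt$ whenever a larger-indexed vector is added to a smaller-indexed one). This is precisely the bookkeeping carried out for symmetric forms in Section~\ref{sect3.1-ref}; the alternating case is if anything simpler, since the self-pairing (``fixed-point'') case occurring there cannot arise for a skew form.

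Finally, non-emptiness of $\{g:g^TJg=w\}$ for $w\in\skewsym_{2n}$ is elementary: writing $w$ via its fixed-point-free involution $\pi$ and exponents $a_{\{i,\pi(i)\}}$, take $g_w\in\GL_{2n}(\KLt)$ to be the product of a permutation matrix carrying each pair $\{i,\pi(i)\}$ to a pair $\{l,n+l\}$ and a diagonal matrix with entries suitable powers of $t$, the signs chosen so that $g_w^TJg_w=w$; then $Kg_wB$ is the double coset indexed by $w$, completing the classification of $K\backslash G/B$.
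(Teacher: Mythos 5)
Your proposal is correct and takes essentially the same route as the paper's: record orbits by $\Phi(g)=g^TJg$, reduce to classifying $B$-congruence orbits of nondegenerate skew-symmetric matrices, prove uniqueness via the affine Bruhat decomposition after factoring each $w\in\skewsym_{2n}$ as a constant sign diagonal (which lies in $B\cap B^-$) times an affine permutation matrix, and prove existence by the inductive block-elimination of Section~\ref{sect3.1-ref} driven by the globally minimal-order entry. (You tacitly and correctly use $g^TJg$ where the theorem statement reads $g^{-1}Jg$; the latter is not even constant on $Kg$ and is evidently a typo, consistent with the paper's own proof using $g^TJg$.)
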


For each $w \in \skewsym_{2n}$, we give an explicit formula for a matrix $g_w \in \GL_{2n}(\KLt)$ such that $g_w^{-1}Jg_w = w$. See Definition~\ref{def-gw-sp} for the precise definition of $g_w$. 

The above theorem implies the following corollary.
\begin{corollary}
The map $w \mapsto Kg_wB$ is a bijection between $\skewsym_{2n}$ and $K \backslash G/B$.
\end{corollary}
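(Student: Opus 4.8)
The plan is to derive the corollary as a purely formal consequence of Theorem~\ref{sp-orbit-thm-intro} and the explicit construction of $g_w$ in Definition~\ref{def-gw-sp}. Observe that the theorem already packages exactly the two ingredients one needs: that every $(K,B)$-double coset realizes a \emph{unique} element of $\skewsym_{2n}$ as an invariant $g^{-1}Jg$, and that \emph{any} matrix realizing a given $w \in \skewsym_{2n}$ lies in a single prescribed double coset. So the remaining work is just to unwind the definition of the map $w \mapsto Kg_wB$ and check injectivity and surjectivity, in parallel with the analogous deductions in the orthogonal and special orthogonal cases.

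First I would check the map is well-defined: for each $w \in \skewsym_{2n}$, Definition~\ref{def-gw-sp} supplies an explicit $g_w \in \GL_{2n}(\KLt)$ with $g_w^{-1}Jg_w = w$, so $Kg_wB$ is a genuine element of $K\backslash G/B$ and the assignment $w \mapsto Kg_wB$ makes sense. Next, for surjectivity, I would take an arbitrary double coset $\mathcal{O} \in K\backslash G/B$; the first assertion of Theorem~\ref{sp-orbit-thm-intro} produces some $w \in \skewsym_{2n}$ and some $g \in \mathcal{O}$ with $g^{-1}Jg = w$. Since $g_w$ also satisfies $g_w^{-1}Jg_w = w$, the second assertion of the theorem forces $g$ and $g_w$ into the same double coset, whence $g_w \in \mathcal{O}$ and $Kg_wB = \mathcal{O}$. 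So every double coset is hit.

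For injectivity, suppose $Kg_wB = Kg_{w'}B =: \mathcal{O}$ for $w,w' \in \skewsym_{2n}$. Then $g_w, g_{w'} \in \mathcal{O}$ with $g_w^{-1}Jg_w = w$ and $g_{w'}^{-1}Jg_{w'} = w'$, so both $w$ and $w'$ are elements of $\skewsym_{2n}$ arising as $h^{-1}Jh$ for some $h \in \mathcal{O}$. The uniqueness clause of Theorem~\ref{sp-orbit-thm-intro} then gives $w = w'$. Combining the three steps shows $w \mapsto Kg_wB$ is a bijection between $\skewsym_{2n}$ and $K\backslash G/B$.

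There is no real obstacle in this argument beyond quoting Theorem~\ref{sp-orbit-thm-intro} accurately; the one point that warrants a moment's care is the distinction between an invariant $w$ being realized by \emph{some} $g$ in a coset versus by \emph{every} $g$ in it. Both injectivity and surjectivity above use only the former (``some $g$''), which is precisely what the theorem asserts, so no strengthening is required. If desired, one can additionally remark that surjectivity plus the uniqueness of $w$ already pins down a well-defined inverse map $\mathcal{O} \mapsto w$, giving an alternative packaging of the same bijection.
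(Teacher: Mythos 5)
Your proposal is correct and is essentially the same approach the paper takes: the paper simply asserts the corollary as an immediate consequence of Theorem~\ref{sp-orbit-thm-intro} without spelling out the unwinding, and your three-step argument (well-definedness via Definition~\ref{def-gw-sp} and Lemma~\ref{sqrt-sp-affineperm}, surjectivity from the ``moreover'' clause, injectivity from the uniqueness clause) is exactly the intended deduction. One small remark: you quote the invariant as $g^{-1}Jg$, following the theorem statement, but that appears to be a typo in the paper --- the proof of Theorem~\ref{sp-orbit-thm-intro} and Lemma~\ref{sqrt-sp-affineperm} both work with $g^TJg$, and the bijection $Kg \mapsto g^TJg$ is what is actually used; this does not affect the formal validity of your deduction, since the argument only uses the theorem as a black box. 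There is also a minor notational conflation in the paper (inherited in your write-up) between $w \in \IIfpf$ and the corresponding skew-symmetric matrix $h^{\mathsf{sk}}_w \in \skewsym_{2n}$, so that ``$g_w$ for $w \in \skewsym_{2n}$'' really means $g_v$ for the unique $v \in \IIfpf$ with $h^{\mathsf{sk}}_v = w$; again this is harmless.
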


The proof of Theorem~\ref{sp-orbit-thm-intro} appears in Section~\ref{sect4-ref}.
\section*{Acknowledgments}
We thank Eric Marberg, Yongchang Zhu, and Peter Magyar for many useful comments and discussions.
\section{Preliminaries}\label{chpt2-ref}

In this section, we introduce the necessary notations and preliminaries for this paper, focusing on the concept of loop groups, the (extended) affine Weyl group, and the affine flag variety. These topics have various approaches and contain rich theories, but Magyar's work \cite{Magyar} provides a comprehensive exposition on them. For more detailed information, refer to \cite{Kumar, PS}.
\subsection{Loop groups}

Let $F = \KLt = \left\{ \displaystyle\sum^{\infty}_{i\geq N} a_i t^i : N \in \ZZ, a_i \in \K \right\}$ be the field of \defn{formal (complex) Laurent series}. We define the order \defn{$\ord$} of a formal Laurent series $f = \displaystyle\sum^{\infty}_{i = -\infty} a_i t^i \in \KLt$ to be the smallest integer $N$ such that $a_N \neq 0$. 

The ring of \defn{formal (complex) power series}, denoted by $A = \KPt$, consists of all formal Laurent series $f$ with $\ord(f) \geq 0$. The ring $\KPt$ is a discrete valuation ring with a valuation $\ord$, and $\KLt$ is the quotient field of $\KPt$ with the same discrete valuation $\ord$.

It is well-known that the units $\KPt^*$ in $\KPt$ are precisely the elements of $\KPt \backslash t\KPt$, hence $\KPt$ is a local ring with a maximal ideal generated by $t$.

Notice that $\ord(f) \geq 0$ if and only if $f \in \KPt$, and $\ord(f) = 0$ if and only if $f \in \KPt^*$.

Moving forward, we consider a fixed natural number $n$. For any commutative ring $R$, let $\GL_n(R)$ be the group of invertible matrices with entries in $R$. The group $G = \GL_n(\KLt)$ is often called the \defn{(algebraic) loop group} of $\GL_n(\K)$, because $\GL_n(\CCCt)$ can be viewed as a completion of the group of polynomial maps from the circle $S^1 \subset \CC^{\times}$ to $\GL_n(\CC)$ \cite[\S 1.1]{Magyar}. 

We define $B_n \subset \GL_n(\KPt)$ to be the \defn{Iwahori subgroup} consists of upper triangular matrices modulo $t$. For example,
\[ \begin{pmatrix} 1 & 1+t & \sum_{i\geq 0}^{\infty}t^i \\
0 & 1+t^2 & t + t^3\\
0 & 0 & -1 + t^2 + t^4 \end{pmatrix} \in B_3, \quand
\begin{pmatrix} 1 & 1+t & \sum_{i\geq 0}^{\infty}t^i \\
t & 1+t^2 & t + t^3\\
 \sum_{i\geq 1}^{\infty}t^i & t + t^3 + t^5 & -1 + t^2 + t^4 \end{pmatrix} \in B_3, \]
while $\begin{pmatrix} 1 & 1+t & \sum_{i\geq 0}^{\infty}t^i \\
0 & 1 & t + t^3\\
0 & 0 & t \end{pmatrix} \notin B_3$, since its determinant is equal to $t$, which is not a unit in $\KPt$, and hence the matrix is not invertible in $\GL_3(\KPt)$.

If $n$ is clear from context, we write $B$ instead of $B_n$ for the Iwahori subgroup.

\subsection{Extended affine Weyl group}
The extended affine Weyl group and affine Weyl group of $\GL_n(\KLt)$ are concretely described as the following.

Let $\Perm(\ZZ)$ denote the group of bijective maps from $\ZZ$ to $\ZZ$, with composition of maps as the group operation.
Let $\sigma$ be the shift bijection with $\sigma(i) = i+1$ for all $i \in \ZZ$, and let $\tau = \sigma^n$, so that $\tau(i) = i+n$ for all $i \in \ZZ$. 
\begin{definition}
The \defn{extended affine symmetric group} $\widetilde S_n^+$ is the subgroup of $\Perm(\ZZ)$ consisting of bijective maps $w: \ZZ \rightarrow \ZZ$ satisfying $w(i+n) = w(i) + n$ for all $i \in \ZZ$. We refer to elements $w \in \widetilde S_n^+$ as an \defn{affine permutations}.
\end{definition}

Throughout this paper, denote the set $\{1, 2, \dots, n\}$ as $[1,n]$. For $\mathbf{c} = (c_1, c_2, \dots, c_n) \in \ZZ^n$, we define $\tau^{\mathbf{c}}$ by the formula
 \[\tau^{\mathbf{c}}(i + kn) = i + kn + nc_i\]
  for $i \in [1,n]$ and $k \in \ZZ$. This gives an embedding from $\ZZ^n$ to $\Perm(\ZZ)$. 

Define $S_n$ as the subgroup of $\widetilde S_n^+$ consists of affine permutations preserving $[1,n]$. This subgroup $S_n$ is isomorphic to the permutation group of $n$ elements.

In fact we can express any element $\pi \in \widetilde S_n^+$ uniquely as 
\[ \pi = \bar \pi \tau^{\mathbf{c}},\]
where $\bar \pi \in S_n$ and $\mathbf{c} \in \ZZ^n$.
 For any two elements $\pi_1 = \bar \pi_1 \tau^{\mathbf{c_1}}$ and $\pi_2 = \bar \pi_2 \tau^{\mathbf{c_2}}$ in $\widetilde S_n^+$, we have
\[ \pi_1 \pi_2 = \bar \pi_1 \tau^{\mathbf{c_1}}\bar \pi_2 \tau^{\mathbf{c_2}} =\bar \pi_1 \bar \pi_2 \tau^{\bar \pi_2^{-1} (\mathbf{c_1})} \tau^{\mathbf{c_2}} =  \bar \pi_1 \bar \pi_2 \tau^{\bar \pi_2^{-1} (\mathbf{c_1}) + \mathbf{c_2}}.  \]
This shows that $\widetilde S_n^+$ is isomorphic to a semi-direct product $S_n \ltimes \ZZ^n$.
\begin{definition}
The \defn{affine symmetric group} is the subgroup $\widetilde{S_n} = \{ \pi \in \widetilde S_n^+ : \sum^n_{i = 1} (\pi(i) - i) = 0 \}$ of the extended affine symmetric group $\widetilde S_n^+$.
\end{definition} 
We have $\widetilde{S_n} \cong S_n \ltimes \ZZ_0^n$, where $\ZZ_0^n = \{\mathbf{c} \in \ZZ^n : \sum^n_{i=1} c_i =0 \} = \ZZ^n \cap \widetilde{S_n}$. The group $\widetilde{S}_n$ is the affine symmetric group of extended Dynkin type $\widehat A_{n-1}$.

For $i \in \ZZ$, define the \defn{simple transposition} $s_i \in \widetilde S_n^+$ as the affine permutation satisfying $s_i(i + kn) = i+1 + kn$ and $s_i(i+1+kn) = i+kn$ for $k \in \ZZ$, and $s_i(j) = j$ for $j \not\equiv i, i+1$ mod $n$. It follows that $s_i = s_{i+n}$ for any $i \in \ZZ$.

The subgroup $S_n \subset \widetilde S_n^+$ is generated by $s_1, s_2, \dots, s_{n-1}$, while the group $\widetilde{S_n}$ is generated by $s_1, s_2, \dots s_{n-1}$ and $s_0 = s_n$.
\begin{remark}
The groups $\widetilde S_n$ and $\widetilde S_n^+$ are the \defn{affine Weyl group} and the \defn{extended affine Weyl group} of $\GL_n(\KLt)$ respectively.
In the literature, these (extended) affine Weyl groups are also defined in the following way.
Let $T$ be the subgroup of $\GL_n(\KLt)$ consisting of diagonal matrices, and let $\hat T$ be the subgroup of $\GL_n(\KPt)$ consisting of diagonal matrices.

Recall that if $H'$ is a subgroup of $H$, then the \defn{normalizer} of $H'$ in $H$, denoted as $N_H(H')$, is defined as the group $\{h \in H : h^{-1}H'h = H'\}$.
For $G = \GL_n(\KLt)$, the extended affine Weyl group is defined as the quotient $N_G(\hat T)/\hat T$ and the affine Weyl group is defined as the quotient $N_{G_0}(\hat T) / \hat T$, where $G_0 = \{ g\in G : \ord (\det g) = 0\}$. It holds that 
$ \widetilde S_n^+ \cong N_G(T)/T$
 and 
 $\widetilde{S_n} \cong N_{G_0}(\hat T) / \hat T$.
  See \cite[\S 1.2]{Magyar} for more details.
\end{remark}

Below, we describe a way to identify elements in $S_n$ and $\widetilde S_n^+$ with matrices in $\GL_n(\KLt)$. Let $e_1, e_2, \dots, e_n$ denote the standard $\KLt$-basis of $\KLt^n$, and also the standard $\ZZ$-basis of $\ZZ^n$. For a permutation $\pi \in S_n$, the \defn{permutation matrix} associated with $\pi$ is an $n$-by-$n$ matrix (also denoted as $\pi$) such that $\pi e_i = e_{\pi(i)}$ for all $i \in [1,n]$. In other words, the matrix $\pi$ is an $n$-by-$n$ monomial matrix with $1$'s in the entries $(\pi(i), i)$ for all $i \in [1,n]$. Recall that  a \defn{monomial matrix} is a matrix with only one non-zero entry in each row and column, and an \defn{affine permutation matrix} in $\GL_n(\KLt)$ is a monomial matrix with non-zero entries being integral powers of $t$.

Similar to the permutations in $S_n$, for an affine permutation $\pi = \bar \pi \tau^{\mathbf{c}} \in \widetilde S_n^+$, the affine permutation matrix associated with $\pi$ is an $n$-by-$n$ matrix (also denoted as $\pi$) such that $\pi e_i = t^{c_i}e_{\pi(i) \text{ mod } n}$ for all $i \in [1,n]$. In other words, the matrix $\pi$ is an $n$-by-$n$ monomial matrix with $t^{c_i}$'s in the entries $(\bar \pi(i), i)$ for $i \in [1,n]$.

For example if $n = 6$ and $\pi = (1\, 2 \, 3) (4 \, 6) \tau^{(1,-1,2,0, -2, 3)}$, then the affine permutation matrix associated with $\pi$ is 
\[\begin{pmatrix}
0 & 0 & t^2 & 0 & 0 & 0\\
t^{1} & 0 & 0 & 0 & 0 & 0\\
0 & t^{-1} & 0 & 0 & 0 & 0\\
0 & 0 & 0 & 0 & 0 & t^3\\
0 & 0 & 0 & 0 & t^{-2} & 0\\
0 & 0 & 0 & t^0 & 0 & 0
\end{pmatrix} .
\]

\begin{remark}\label{affine-perm-as-matrix}
Using the conventions in the previous paragraphs, we can now identify elements in $\widetilde S_n^+$ as affine permutation matrices, and elements in $\widetilde{S_n}$ as affine permutation matrices for which the sum of the powers of $t$ of all non-zero entries is equal to $0$.\end{remark}

\subsection{Discussion of related work}\label{mann-thesis-ref}

$K$-orbits in the affine flag variety were also studied in the Ph.D. thesis of Mann \cite{Mann}. In this section we explain the relationship between the results in this paper and Mann's work. Mann's thesis addresses a problem mentioned by Lusztig \cite{Lusztig} concerning affine generalizations of the results in his paper with Vogan \cite{lusztig1983singularities}.

Vogan \cite{Vogan} showed that in the finite case the $K$-orbits on the flag variety are in bijection with the $K$-conjugacy classes of the pair $(H,B)$, where $H$ is a $\theta$-stable maximal torus in $G$, and $B$ is a $\theta$-stable Borel subgroup of $G$ containing $H$. For the moment, we denote $G(\CCCt)$ as the loop group of a semisimple complex Lie group $G$. To distinguish the Iwahori subgroup from the Borel subgroup, we denote the Iwahori group by $I$ in this subsection only. 

The following theorem is one of the main results in Mann's thesis.

\begin{theorem}[{\cite[Theorem 6.1]{Mann}}]
The $K(\CCCt)$-orbits on the affine flag variety $G(\CCCt)/I$ are in bijection with the $K$-conjugacy classes of the triples $(H, B, \mu)$, where $H$ is a $\theta$-stable maximal torus in $G$, $B$ is a $\theta$-stable Borel subgroup containing $H$, and $\mu$ is a split cocharacter of $H$, meaning that $\mu \in \Hom(\CC^*, H)$ such that $\theta \mu = \mu^{-1}$.
\end{theorem}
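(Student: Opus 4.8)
The natural plan is to transport the analysis of Vogan \cite{Vogan} and Richardson--Springer \cite{Richardson90} of $K$-orbits on a finite flag variety to the loop-group setting, exploiting the fact that $\CCCt$ has cohomological dimension one, so that the arithmetic obstructions which could appear are as mild as in the split finite case. Write $\hat G = G(\CCCt)$ and $\hat K = K(\CCCt)$, with $\theta$ acting coefficientwise. \emph{Step 1 (reduction to twisted involutions):} first I would pass from $\hat K$-orbits on $\hat G/I$ to orbits of $\hat K \times I$ on $\hat G$, and study the map $\psi\colon \hat G \to \hat G$ given by $g \mapsto \theta(g)^{-1}g$. Since $\psi(kg) = \psi(g)$ for $k \in \hat K$ and $\psi(gh) = \theta(h)^{-1}\psi(g)h$, the image of $\psi$ lies in the set of \emph{$\theta$-twisted involutions} $\mathcal{X} = \{x \in \hat G : \theta(x) = x^{-1}\}$ (equivalently, the set of $x$ such that $x\theta$ is an involution in $\hat G \rtimes \langle \theta \rangle$), and $\psi$ descends to a map from $\hat K\backslash\hat G/I$ into $\mathcal{X}$ modulo the twisted-conjugation action $x \mapsto \theta(h)^{-1}xh$, $h \in I$; connectedness of $\hat G$ confines the image to a single $\hat G$-twisted conjugacy class. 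The technical input is that this descended map is a bijection onto its image: surjectivity amounts to producing, for each such $x$, a square root $g$ with $\psi(g) = x$ (exactly what the explicit matrices $g_w$ of Theorems \ref{ortho-orbit-thm-intro}, \ref{sortho-orbit-thm-intro} and \ref{sp-orbit-thm-intro} accomplish in type A), while injectivity is a Lang-type statement — the fibre of $\psi$ over $x$ is a single $\hat K$-coset, since the component group of the $\theta$-twisted centralizer of $x$ is trivial over a field of cohomological dimension one.

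\emph{Step 2 (normal form via $\theta$-stable tori):} the heart of the argument is to show that every $x \in \mathcal{X}$ is twisted-$I$-conjugate to a distinguished representative attached to a triple $(H,B,\mu)$. I would first move $x$ into a parahoric subgroup and pass to its reductive quotient in order to arrange that the ``finite part'' of $x$ normalizes a maximal torus, and then invoke the affine analogue of the Richardson--Springer result that a twisted involution can be conjugated so as to normalize a $\theta$-stable maximal torus $H$ and to stabilize a $\theta$-stable Borel $B \supseteq H$. Representing $x$ then by an element of $N_{\hat G}(H(\CCCt))$ whose class in the extended affine Weyl group $\widetilde W \cong W \ltimes X_*(H)$ is $\bar w\,\tau^{\mu}$, the relation $\theta(x) = x^{-1}$ becomes a combinatorial constraint on the pair $(\bar w, \mu)$; after the normalization of $(H,B)$ this forces, in particular, $\mu$ to be a \emph{split} cocharacter, $\theta\mu = \mu^{-1}$. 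I expect this step to be the main obstacle: establishing that every orbit meets a $\theta$-stable structure requires an induction on a length/type function on $\widetilde W$ together with the relevant $H^1$-vanishing (equivalently, a ``descent to the closed orbit'' argument through $\theta$-stable parahoric subgroups), and because $\hat G/I$ is an infinite-dimensional ind-scheme one must replace the finiteness and properness used in the finite case by their ind-scheme counterparts.

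\emph{Step 3 (uniqueness and assembly):} finally I would check that the triple is well defined up to conjugacy by $K = G^\theta$ and that conjugate triples yield the same orbit. The forward direction is formal: conjugating $(H,B,\mu)$ by $k \in K$ conjugates the chosen representative of $x$ by $k$, hence leaves the $\hat K$-orbit unchanged. For the converse, running Step 1 backwards shows that the only freedom in recovering the $(\hat K, I)$-coset from $x$ is by $\hat K$, which on the rigid datum $(H, B, \mu)$ descends precisely to $K$-conjugacy, since the $\theta$-stable torus, the $\theta$-stable Borel through it, and the cocharacter are each rigid under twisted $I$-conjugation. Combining Steps 1--3 yields the asserted bijection between $\hat K$-orbits on $\hat G/I$ and $K$-conjugacy classes of triples $(H,B,\mu)$.
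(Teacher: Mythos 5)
This theorem is not proved in the paper: it is quoted verbatim from Mann's thesis (\cite[Theorem~6.1]{Mann}) and the paper merely illustrates it with an explicit computation for $G = \SL_2(\CC)$, $K = \SO_2(\CC)$, in Section~\ref{mann-thesis-ref}. There is therefore no proof in this paper to compare against, and the task of reproving Mann's result is well outside the paper's stated scope, which is to give \emph{explicit} type~A classifications rather than to reprove the type-independent one.

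On its own terms, your sketch is a sensible high-level plan and correctly identifies the two standard pillars: the map $\psi(g) = \theta(g)^{-1}g$ reducing $(\hat K, I)$-double cosets to twisted conjugacy classes, and the Richardson--Springer/Vogan-style normalization of a twisted involution relative to a $\theta$-stable pair $(H,B)$. However, as a proof it has genuine gaps that your own Step~2 acknowledges but does not close. In particular: (i) the assertion that the fibre of $\psi$ is a single $\hat K$-coset ``since the component group of the $\theta$-twisted centralizer is trivial'' is not a consequence of cohomological dimension one alone — twisted centralizers can be disconnected, and this is precisely why the datum $(H,B,\mu)$ must record the Borel $B$ and not just $H$ and $\mu$ (compare the paper's own $\SO_n$ case, where $g_w^+$ and $g_w^-$ yield distinct orbits); (ii) the ``affine analogue of the Richardson--Springer result'' that any twisted involution can be conjugated into $N_{\hat G}(H(\CCCt))$ normalizing a $\theta$-stable Borel is exactly the hard content of Mann's theorem, and you do not supply the inductive length argument or the parahoric descent needed to establish it; (iii) the claim that connectedness of $\hat G$ confines the image of $\psi$ to one twisted conjugacy class is not true at this level of generality and is not actually needed. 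So the proposal correctly reconstructs the shape of the argument but leaves its load-bearing step as an announced obstacle rather than a completed lemma.
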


In Vogan's work and Mann's thesis, the indexing set is slightly different. The $B$ in the pairs $(H,B)$ or the triples $(H, B, \mu)$ is replaced by $\Delta^+$, a positive root system of the Lie algebra $\mathfrak{h}$ of torus $H$ instead. However, by the root space decomposition of the Lie algebra $\mathfrak{g}$ of $G$, a positive root system of the Lie algebra $\mathfrak{h}$ of torus $H$ uniquely determines a Borel subalgebra $\mathfrak{b}$ containing $\mathfrak{h}$, which in turn determines a Borel subgroup $B$ by the exponential map. The $\theta$-stable property of Lie groups carries through to the differential $d \theta$-stable property of Lie algebras (see, for example, \cite[Thm. 3.28]{hall2013lie}), and so $B$ is $\theta$-stable. Therefore each triple $(H, \Delta^+ , \mu)$ encodes the same data as $(H, B, \mu)$ for some $\theta$-stable Borel subgroup $B$ containing $H$.

Here we clarify what is meant by $K$-conjugacy of the triples $(H, B, \mu)$. Two triples $(H, B, \mu)$ and $(H', B', \lambda)$ are $K$-conjugate if there is a $k \in K$ such that $H' = \textrm{Ad}_kH$, $B' = \textrm{Ad}_kB$ and $\lambda =\textrm{Ad}_k \circ \mu$, where $\textrm{Ad}_k$ is the map from $G$ to itself given by the formula $\textrm{Ad}_k(g) = k^{-1} gk$.

The main results of this paper also provide such an orbit classification, but only in type A affine flag variety in which $G = \GL_n(\CCCt)$. Mann's classification is more general but much less explicit. As will be discussed below, it is not straightforward to obtain our results by specializing Mann's classification to type A. Moreover, Mann's work has not been published and is not easily accessible.

Here, we quote the explicit example given in Mann's thesis \cite{Mann} to illustrate the triples $(H, B, \mu)$. We consider $G = \SL_2(\CC)$ and $\theta(g) = (g^T)^{-1}$, so that $K = G^\theta = \SO_2(\CC)$. Explicit calculations show that in this case, the distinct double coset representatives of $K(\CCCt) \backslash G(\CCCt)/ I$ are (for more details, see Lemma~\ref{sorthogonaln=2-gl2}):
\[ \begin{pmatrix}t^n & 0 \\ 0 & t^{-n} \end{pmatrix} \quad \text{ with $n \in \ZZ$, } \quad \begin{pmatrix}0 & i \\ i & 0 \end{pmatrix} \quand \begin{pmatrix}0 & -i \\ -i & 0 \end{pmatrix} \text{ with $i = \sqrt{-1}$. } \]
Now, we list the triples $(H, B, \mu)$ for this example. The $K$-conjugacy classes of $\theta$-stable maximal tori consist of $H$, the diagonal matrices, and $H' = K = \SO_2(\CC)$.

For the case when $H$ is the group of diagonal matrices, all $\mu$'s are of the form $\mu: z \mapsto \begin{psmallmatrix} z^n & 0 \\ 0 & z^{-n} \end{psmallmatrix}$, where $n \in \ZZ$. Therefore, all $\mu$'s are split, and $\Hom(\CC^*, H) \cong \ZZ$ as groups. There are two choices of positive roots, namely $\alpha = e_{11}^* - e_{22}^* \in \mathfrak{h}^* = \Hom_{\CC}(\mathfrak{h}, \CC)$, where $\mathfrak{h}$ is the Lie algebra of $H$, and $-\alpha = e_{22}^* - e_{11}^*$. However, in this case, $d\theta \alpha = -\alpha$ (the $\theta$ action on the positive roots is induced from its action on $H$). Therefore, the two positive root systems, corresponding to the two Borel subgroups of upper or lower triangular matrices in $G$, are in the same $K$-conjugacy class. Consequently, the orbit representatives for this $H$ are 
\[(H, B, \mu \in \Hom(\CC^* , H) \cong \ZZ),\]
 where $B$ is the standard Borel subgroup of $G$, and the triples correspond to the double coset representatives $\begin{psmallmatrix}t^n & 0 \\ 0 & t^{-n} \end{psmallmatrix}$.

In the other case, where $H' = K = \SO_2(\CC)$, all cocharacters are fixed by $\theta$, and thus the only split cocharacter is $\mu = 0 \in \ZZ$ or the identity map. Furthermore, the small Weyl group, which counts the number of $K$-conjugate positive root systems, is $N_{K} H' / Z_{K} H' = 1$. Therefore, the two Borel subgroups containing $H' = K$ are fixed by $K$, and hence there are two distinct choices of $\Delta^+$, namely $ \{\alpha'\} = \{ i(e_{11}^* - e_{22}^*) + (e_{12}^* + e_{21}^*)\}$ or $\{ - \alpha'\}$. Consequently, the orbit representatives for this $H'$ are the triples 
\[(H', B_{\alpha'}, 0) \quand (H', B_{-\alpha'}, 0),\]
 where $B_{\alpha'}$ and $B_{-\alpha'}$ are $\theta$-stable Borel subgroups of $G$ corresponding to $\Delta^+ = \{\alpha'\}$ and $\Delta^+ = \{-\alpha'\}$ respectively. The two triples correspond to the double coset representatives $\begin{psmallmatrix}0 & i \\ i & 0 \end{psmallmatrix}$ and $\begin{psmallmatrix}0 & -i \\ -i & 0 \end{psmallmatrix}$, respectively.

The above example is somewhat complicated even for the simple cases with explicit matrix groups and small dimensions. 
Mann's results have the advantage that the classification of $LK$-orbits is type-independent. 
However, the classification given by Mann is non-constructive. It is not clear what the double coset representatives look like for general cases.

In this paper, although we limit our scope to $G = \GL_n$ and type A affine flag varieties, the double coset representatives $K\backslash G/I$ are explicitly given in matrix forms. They are also related to certain concrete affine (twisted) involutions under explicit bijections. These concrete affine twisted involutions open the door to determining the weak order of the ``closure" of the double cosets in $K\backslash G/I$, which would generalise the results of \cite{CJW}. 
\section{Orbits of orthogonal groups} \label{chpt3-ref}

In this section we study the orbits of orthogonal groups in the affine flag variety. In Section~\ref{sect3.1-ref}, we briefly review the strategy for finding double coset representatives of $K\backslash G/B$ with $(G,K) = (\GL_n(\CC), \O_n(\CC))$ and $B$ the standard Borel subgroup of $G$. We also investigate the case $(G, K) = (\GL_n(F), \O_n(F))$ in the same section, where $F = \KLt$. We then investigate the case $(G, K) = (\SL_n(F), \SO_n(F))$ in Section~\ref{sect3.3-ref}.

\subsection{Orbits of the orthogonal group}\label{sect3.1-ref}
The following decomposition in the finite case was known in \cite[Theorem 4.1]{MO} and \cite[Example 10.2]{Richardson90}:
\[\GL_n(\CC) = \bigsqcup_{w = w^{-1} \in S_n} \O_n(\CC)g_wB,\]
where $g_w$ is a certain element in $\GL_n(\CC)$ satisfying the property $g_w^T g_w = w$, with $w$ treated as a symmetric permutation matrix.

Now we are going to consider the affine version of the above decomposition. Before this, we will review how the above decomposition can be proven in the finite case \cite[Example 10.2]{Richardson90}. 
Suppose $k$ is a field with characteristic not equal to two. Denote the $n$-by-$n$ identity matrix by $1_n$.
\ben
\item[(i)] Consider the right action of $\GL_n(k)$ on itself, given by $x \cdot g:= g^Txg$. The stabilizer subgroup for $x = 1_n$ under this action consists of the elements $g \in \GL_n(k)$ such that $1_n = g^Tg$, i.e. the orthogonal group $\O_n(k)$. Therefore by the orbit-stabilizer theorem, the map $\O_n(k) g \mapsto g^Tg$ is a bijection between the set of right cosets $\O_n(k)\backslash \GL_n(k)$ and the set $\{g^Tg: g \in \GL_n(k)\}$.
\item[(ii)] Define a right $B$-action on the set $X = \{g^Tg: g \in \GL_n(k)\}$ by $g^Tg \cdot b := b^Tg^Tgb$ for $b \in B$. This right action commutes with the right $B$-action on $\O_n(k)\backslash \GL_n(k)$ given by right multiplication under the bijection in (i).
\item[(iii)] Hence, there is a bijection between the set $\O_n(k)\backslash \GL_n(k)/B$ and the $B$-orbits on the set $X$. 
\een

In the case where $k = \CC$, the set $X$ is the set of all symmetric matrices in $\GL_n(\CC)$. This can be seen as a corollary of the Autonne-Takagi factorization \cite[Corollary 4.4.4(c)]{horn2012matrix}. Therefore the $B$-orbits on $X$ are the symmetric $n$-by-$n$ permutation matrices, and hence $\O_n(\CC)\backslash \GL_n(\CC)/B$ is indexed by the involutions as described.

The affine case where $k = \KLt$ is more complicated. Recall that everywhere in this paper $\K$ is a quadratically closed field. From now until the end of this subsection, we assume $F = \KLt$, $G = \GL_n(F)$ and $K = \O_n(F)$. Define $B$ to be the Iwahori subgroup instead of the Borel subgroup. Now, the orbit $1_n \cdot G =\{g^Tg: g \in \GL_n(F)\}$ no longer consists of all symmetric matrices. Yet we can classify what exactly are the elements in $1_n \cdot G$. We shall explore the case $n=2$ first.

We first state a lemma concerning the squares in $F = \KLt$ and $A = \KPt$.

\begin{lemma}
An element $g \in F$ is a square if and only if $\ord(g)$ is even.
\begin{proof}
Suppose $\ord(g)$ is even, so that $g = t^{2m} u$ where $\ord(g) = 2m$ for some integer $m$, and $u \in A^*$. Note that $u$ can be written as $u_0(1+ tu')$ for some $u' \in \KPt$, where $u_0$ is the constant term of $u$. Because $\K$ is a quadratically closed field, the element $u_0$ has a well-defined square root. Therefore $u$ also has a square root $u^{1/2} = u_0^{1/2}(1+tu')^{1/2}$ \cite[Theorem 4.1]{Sambale} and it holds that $g =( t^m u^{1/2})^2$.

Conversely suppose $g$ is a square. Then $g = h^2$ for some $h \in \KLt$, and so $\ord(g) = 2 \ord(h)$, which is even.
\end{proof}
\end{lemma}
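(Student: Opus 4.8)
The plan is to prove the two implications separately, with essentially all the content in the reverse direction. For the ``only if'' direction, suppose $g = h^2$ for some nonzero $h \in F$. Since $\ord$ is a discrete valuation on $F$, we get $\ord(g) = \ord(h^2) = 2\ord(h)$, which is even. (The zero element is trivially a square and $\ord$ is only defined on nonzero elements, so we may assume $g \neq 0$ throughout.)

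For the ``if'' direction, write $g = t^{2m} u$ where $2m = \ord(g)$ and $u$ is a unit in $A = \KPt$, i.e. $\ord(u) = 0$; it then suffices to exhibit a square root of $u$ inside $A$, since then $g = (t^m u^{1/2})^2$. I would produce $u^{1/2}$ by solving the coefficient equations for $h = c + h_1 t + h_2 t^2 + \cdots$ recursively, where $c \in \K$ is chosen with $c^2 = u_0$ (possible since $\K$ is quadratically closed, $u_0$ being the constant term of $u$): matching the coefficient of $t^k$ in $h^2 = u$ gives $2c\,h_k = u_k - \sum_{i=1}^{k-1} h_i h_{k-i}$, which determines $h_k$ uniquely from the earlier coefficients since $2c \neq 0$. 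Equivalently, writing $u = u_0(1 + t v)$ with $v \in A$, the binomial series $(1+tv)^{1/2} = \sum_{k\geq 0}\binom{1/2}{k}(tv)^k$ converges $t$-adically (its $k$-th term has order at least $k$) and squares to $1 + tv$, so $u^{1/2} = c\,(1+tv)^{1/2}$; this is the square-root statement of \cite[Theorem 4.1]{Sambale}. One can also phrase this via Hensel's lemma applied to $x^2 - u$ over the complete discrete valuation ring $A$, with the approximate root $c$ modulo $t$ whose derivative $2c$ is a unit.

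The only genuine subtlety is the existence of $u^{1/2}$ in $A$, and it is worth flagging that this is exactly where both standing hypotheses on $\K$ enter: quadratic closedness is needed to extract the square root of the constant term $u_0$, and $\mathrm{char}\,\K \neq 2$ is needed to divide by $2c$ at every stage of the recursion (equivalently, for the binomial coefficients $\binom{1/2}{k}$ to make sense, equivalently for the derivative in the Hensel argument to be a unit). Completeness of $A$ in the $t$-adic topology, noted already in the preliminaries, guarantees that the recursively defined series actually lies in $A$. Once $u^{1/2} \in A$ is in hand, no further work is needed.
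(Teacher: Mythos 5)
Your proof is correct and follows essentially the same route as the paper: both directions proceed by writing $g = t^{2m}u$ with $u \in A^*$, extracting a square root of the constant term via quadratic closedness of $\K$, and taking a square root of the remaining $1+tv$ factor inside $A$. The only difference is that you fill in the justification for the latter step (via the recursive coefficient equations / binomial series / Hensel's lemma), whereas the paper delegates it to the cited reference.
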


Before we consider the case $n=2$ and general $n \in \ZZ_{\geq 0}$, we first present results on the generators of the Iwahori subgroup $B_n$. 

For $m \in \ZZ$, we define $U_{ij,m} := 1_n + t^mA E_{ij}$. Let $T(A)$ be the group of $n$-by-$n$ diagonal matrices with all entries in $A$. Then the Iwahori subgroup $B$ of $\GL_n(A)$ can be written as \cite[Theorem 2.5]{IM}:
\[ B = {T}(A) \prod_{i<j}U_{ij,0} \prod_{i>j} U_{ij,1}.\]

In particular for $n=2$, the generators of $B_2$ are
\[ \begin{pmatrix} \alpha &0 \\ 0 & \beta \end{pmatrix} \text{ with } \alpha, \beta \in A^*, \quad \begin{pmatrix} 1 & t^k u \\ 0 & 1 \end{pmatrix} \text{ with } k\geq 0, u \in A^* , \quand \begin{pmatrix} 1 & 0 \\ t^k u & 1 \end{pmatrix} \text{ with } k\geq 1, u \in A^*. \]

The following fact will be used throughout this paper. Notice that multiplying elements $u_{ij}^T$ on the left of a matrix, where $u_{ij} \in U_{ij,0}$ or $U_{ij,1}$ is the same as doing row operations, either adding a $A$-multiple of a row to another row below, or adding a $tA$-multiple of a row to another row above. Similarly, multiplying the corresponding elements $u_{ij} \in U_{ij,0}$ or $U_{ij,1}$ on the right of a matrix is the same as doing column operations, either adding a $A$-multiple of a column to another column on the right, or adding a $tA$-multiple of a column to another column on the right.

\begin{lemma} \label{charsym1}
If $h = h^T \in \GL_n(F)$, then there exists $g \in \GL_n(F)$ such that 
 $g^Thg $ is a symmetric affine permutation matrix with only $1$'s in the off-diagonal entries, and $1$'s or $t$'s in the diagonal entries. 
 \end{lemma}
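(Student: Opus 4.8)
The plan is to reduce an arbitrary symmetric $h = h^T \in \GL_n(F)$ to the claimed normal form by a congruence transformation $h \mapsto g^T h g$, building $g$ as a product of elementary matrices (row/column operations of the type recorded in the paragraph before the lemma, i.e.\ conjugates of the generators of $B$) together with permutation matrices and diagonal rescalings. I would proceed by induction on $n$. The base case $n = 1$ is immediate: a symmetric $1$-by-$1$ matrix is just a unit $u \in F^*$; writing $\ord(u) = 2m + \epsilon$ with $\epsilon \in \{0,1\}$ and using the preceding lemma (squares in $F$ are exactly the elements of even order), we find $f \in F^*$ with $f^2 u = t^\epsilon$, so $g = (f)$ works.

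For the inductive step, among all nonzero entries $h_{ij}$ of $h$, choose one of \emph{minimal order}; call this minimal order $d$. There are two cases. If some \emph{diagonal} entry $h_{kk}$ attains the minimal order $d$, I first bring it to the $(1,1)$ position by a symmetric permutation $P^T h P$, then rescale by a diagonal matrix so that the $(1,1)$ entry becomes $t^d$ (possible since a unit times $t^d$ has a square root times $t^{\lceil d/2\rceil}$... more precisely one clears the unit part and the parity of $d$ by the squares lemma, landing on $t^0$ or $t^1$). Now the $(1,1)$ entry is $t^\epsilon$ with $\epsilon \in \{0,1\}$ and it divides every entry in the first row and column (all of which have order $\geq d \geq \epsilon$, with quotient in $A$); so I can use symmetric row-and-column operations $g^T(\cdot)g$ with $g = \prod (1 - c\, E_{1j})$ to clear the entire first row and first column. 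This is legitimate: adding an $A$-multiple of column $1$ to a later column, and simultaneously the same operation on rows, is exactly a congruence by an element of the required elementary form. What remains is a symmetric $(n-1)$-by-$(n-1)$ matrix in $\GL_{n-1}(F)$, to which induction applies. If instead \emph{no} diagonal entry attains the minimal order, pick an off-diagonal $h_{ij}$ of order $d$; by a symmetric permutation move it to position $(1,2)$ (and $(2,1)$). Then the $2$-by-$2$ top-left block is $\begin{psmallmatrix} h_{11} & h_{12} \\ h_{12} & h_{22}\end{psmallmatrix}$ with $\ord(h_{12}) = d$ strictly less than $\ord(h_{11}), \ord(h_{22})$, so its determinant has order $2d$; the congruence $g_0 = \begin{psmallmatrix} 1 & 0 \\ 1 & 1 \end{psmallmatrix}$ (or a variant) produces a diagonal entry of order exactly $2d$ — wait, that is $\ge d$, so rather I use the hyperbolic-to-diagonal congruence $\begin{psmallmatrix}1 & 1 \\ 1 & -1\end{psmallmatrix}$-type change of basis to split the anisotropic/hyperbolic plane: over $F$ (with $\tfrac12 \in F$) any nondegenerate symmetric $2$-by-$2$ form diagonalizes, yielding two diagonal entries whose orders I can then normalize to $0$ or $1$ by the squares lemma, and whose minimum is still $d$. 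This reduces to the previous case on a $2$-by-$2$ block and peels off a $1$-by-$1$ piece, again completing the inductive step.

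Throughout, I must check that the cumulative $g$ genuinely lies in $\GL_n(F)$ (automatic, as a finite product of invertible matrices over $F$) — note the lemma only asserts $g \in \GL_n(F)$, \emph{not} $g \in B$, so there is no Iwahori constraint to respect, which is what makes permutations and arbitrary diagonal rescalings allowable. The main obstacle is the bookkeeping in the off-diagonal case: one must verify that diagonalizing a $2$-by-$2$ symmetric block via an explicit change of basis (i) is a valid congruence over $F$, (ii) keeps the resulting matrix symmetric and invertible, and (iii) does not decrease the minimal order below $d$, so that the clearing step in the inductive hypothesis still goes through and the process terminates. Once the minimal-order entry has been promoted to a diagonal $t^\epsilon$ and used to clear its row and column, the off-diagonal entries that survive in the final matrix are exactly the $1$'s produced by normalizing $2$-by-$2$ hyperbolic planes (or, if one prefers to keep pairs rather than diagonalize, the $1$'s in the off-diagonal slots of such blocks), matching the statement; I would double-check that normalizing a hyperbolic plane $\begin{psmallmatrix}0 & t^d \\ t^d & 0\end{psmallmatrix}$ by a diagonal congruence indeed lands on $\begin{psmallmatrix}0 & 1 \\ 1 & 0\end{psmallmatrix}$, which follows since $t^d$ times a unit is a square times $t^{2\lceil d/2\rceil}$ only when $d$ is even — here instead one scales the two rows/columns by $t^{-a}$ and $t^{-b}$ with $a + b = d$, which is a legitimate (non-symmetric in the scaling exponents but perfectly valid) diagonal congruence, giving off-diagonal entry $1$.
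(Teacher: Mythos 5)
Your proposal is correct and takes essentially the same route as the paper's proof — symmetric Gaussian elimination (Lagrange reduction by congruences) followed by a final diagonal rescaling — but a couple of details deserve comment. First, the mid-proof ``wait'' was a false alarm: if $\ord h_{12} = d$ is strictly smaller than $\ord h_{11}$ and $\ord h_{22}$, the congruence by $\begin{psmallmatrix}1 & 0 \\ 1 & 1\end{psmallmatrix}$ produces the $(1,1)$ entry $h_{11} + 2h_{12} + h_{22}$, which has order exactly $d$ (not $2d$), because $\mathrm{char}\,\K \neq 2$ makes $2h_{12}$ the dominant term; so your first instinct already furnishes a minimal-order diagonal pivot and you could have proceeded directly without the hyperbolic-to-diagonal detour (which also works, for the same reason). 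Second, the minimal-order bookkeeping is unnecessary: since $g$ is allowed to range over all of $\GL_n(F)$ rather than $B$ — as you correctly observe — the elimination ratios need not lie in $A$, and the paper simply takes the uppermost/leftmost nonzero entry $h_{j1}=h_{1j}$ of the first row and column as pivot without regard to order, clears its row and column, and when $j>1$ additionally clears the $(j,j)$ entry by the $-c/(2a)$ trick before rescaling the pivot to $1$, deliberately leaving a hyperbolic $\begin{psmallmatrix}0 & 1 \\ 1 & 0\end{psmallmatrix}$ block, which the lemma's target form explicitly permits. In both your argument and the paper's, orders only genuinely enter at the last step, normalizing the surviving diagonal entries to $t^0$ or $t^1$ via the preceding squares lemma, and you apply that lemma correctly.
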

One example of the specified form of $g^Thg$, where $h = h^T \in \GL_6(F)$ is the following: 
 \[
 \begin{pmatrix}
 1& 0 & 0 & 0 & 0 & 0\\
 0 & 0 & 0 & 1 & 0 & 0\\
 0 & 0 & t & 0 & 0 & 0\\
 0 & 1 & 0 & 0 & 0 & 0\\
 0 & 0 & 0 & 0 & 0 & 1\\
 0 & 0 & 0 & 0 & 1 & 0
 \end{pmatrix}.
 \]

\begin{proof}
Consider the first row and first column of $h$, which are transposes of each other since $h = h^T$. There must be a non-zero entry in the first row and column since $h \in \GL_n(F)$. 
Suppose $h_{j1} = h_{1j} \neq 0$ are the uppermost and leftmost non-zero entries in first column and first row respectively.

By performing row operations on $h$, we can eliminate all the entries below $h_{j1}$. Similarly, by performing corresponding column operations, we can eliminate all the entries to the right of $h_{1j}$. This is equivalent to calculating $g^Thg$ where $g$ is a product of elementary matrices. The elimination process takes the following form, for example:
\[ \begin{pmatrix} 1 & 0 & 0 \\ 0& 1 & 0 \\ 0 & -b/a & 1 \end{pmatrix} \begin{pmatrix} 0& a & b \\ a & c & d\\ b &d & * \end{pmatrix} \begin{pmatrix}1 & 0 & 0\\ 0& 1 & -b/a \\ 0 &0 & 1 \end{pmatrix} 
= \begin{pmatrix} 0& a & 0 \\ a& c & d - bc/a \\0 &d - bc/a & *  \end{pmatrix},  \]
where different $*$'s are different elements in $F$, not written explicitly.  

Suppose $j=1$.
We can eliminate each entry below and to the right of $h_{11}$ to get all zeros in the first column and row. The resulting matrix is equal to $(g')^T hg'$ for some $g' \in \GL_n(F)$. Write $h_{11}= t^{b_{11}}u_{11}$ for some integer $b_{11}$ and $u_{11} \in A^*$. We then conjugate the resulting matrix $g'^T hg'$ by 
\[d:=\diag(t^{-\lfloor b_{11}/2 \rfloor}u_{11}^{-1/2}, 1, 1, \dots, 1).\]
 As a result, the $(1,1)$ entry of the resulting matrix $(g'd)^Th(g'd)$ is either $1$ or $t$, depending on whether $b_{11}$ is even or odd, respectively. We can then proceed similarly with the second row and column of $(g'd)^Th(g'd)$, since the $(1,2)$ and $(2,1)$ entries are zero.

Suppose, otherwise, $j > 1$.
We can repeat the process in the previous paragraph to eliminate every entry below and to the right of entries $h_{j1}$ and $h_{1j}$, except $h_{jj}$. To eliminate $h_{jj}$ it suffices to consider a generalisation of the following example with $n =2$:
\[ \begin{pmatrix} 1 & 0 \\ -c/2a &1 \end{pmatrix} \begin{pmatrix} 0 & a \\ a &c \end{pmatrix} \begin{pmatrix}1 & -c/2a \\ 0&1 \end{pmatrix} = \begin{pmatrix}  0 & a \\ a & c/2 \end{pmatrix} \begin{pmatrix}1 & -c/2a \\ 0&1 \end{pmatrix} = \begin{pmatrix} 0 & a \\ a & 0 \end{pmatrix}  \]
The resulting matrix is again equal to $g'^T hg'$ for some $g' \in \GL_n(F)$ being a product of elementary matrices.
From here, we conjugate the resulting matrix $g'^T hg'$ by $d = \diag(h_{j1}^{-1}, 1, 1, \dots, 1)$. As a result, the $(j,1)$ and $(1,j)$ entries of $(g'd)^T h(g'd)$ are $1$, i.e., we have $\begin{psmallmatrix} 0 & 1 \\ 1 & 0 \end{psmallmatrix}$ in the submatrix $(g'd)^T h(g'd)\big|_{\{1,j\} \times \{1, j \}}$.  We can then proceed similarly with the second row and column of $(g'd)^Th(g'd)$, observing the entries $(1,2)$, $(j,2)$, $(2,1)$ and $(2,j)$ are all zero.

By performing induction on the number of rows and columns, we can take $g$ equal to the product of the matrices described above for each row and column. We get the desired form of $g^Thg$, that is, a symmetric affine permutation matrix with only $1$'s in the off-diagonal entries, and $1$'s or $t$'s in the diagonal entries. 
\end{proof}
By using Lemma~\ref{charsym1} above, we derive a characterization of the elements in the set $\{g^Tg: g \in \GL_n(F)\}$ as provided by the following lemma.

\begin{lemma}\label{gtransposeg_n} \label{charsym2}
Suppose $h^T = h \in \GL_n(F)$.
Then $h \in \{g^Tg: g \in \GL_n(F)\} $
if and only if $\det (h)$ is a square in $F$.
\begin{proof}
If $h = g^Tg$, then $\det (h) = \det (g^T)\det(g) = \det(g)^2$. 

Conversely suppose $\det(h)$ is a square in $F$. Notice $h \in \{g^Tg: g \in \GL_n(F) \} $ if and only if there exists $g \in \GL_n(F)$ such that $g^Thg = 1$. 

By Lemma~\ref{charsym1} there exists $g$ such that $g^Thg$ is a symmetric affine permutation matrix with only $1$'s in the off-diagonal entries, and $1$'s or $t$'s in the diagonal entries.

Since the determinant of $h$ and hence $g^Thg$ is a square in $F$, the number of $t$'s in $g^Thg$ is even. Therefore we can pair up the $t$'s in $g^Thg$, so that for any pair of $t$'s located at the $(x,x)$ and $(y,y)$ entries of $g^Thg$, the submatrix $g^Thg\big|_{\{x,y\} \times \{x,y\}}$ is equal to $\begin{psmallmatrix} t& 0 \\ 0 & t\end{psmallmatrix}$. Fix such a pairing. We can also pair up the non-diagonal $1$'s located in the $(x,y)$ and $(y,x)$ entries of $g^Thg$, so that the submatrix $g^Thg\big|_{\{x,y\} \times \{x,y\}}$ is equal to $\begin{psmallmatrix} 0& 1 \\ 1 & 0 \end{psmallmatrix}$. Now notice that
\[  \begin{pmatrix} t& 0 \\ 0 & t\end{pmatrix} =  \begin{pmatrix} 1&  (t-1)^{1/2}\\  -(t-1)^{1/2} & 1 \end{pmatrix}\begin{pmatrix} 1&-(t-1)^{1/2} \\ (t-1)^{1/2} & 1 \end{pmatrix}, \] 
and 
\[ \begin{pmatrix} 0 &1 \\ 1& 0 \end{pmatrix}  = h^Th, \quad \text{ where } h = \begin{pmatrix} i &-\frac{1}{2} i \\1 & \frac{1}{2} \end{pmatrix}.\]
Therefore $g^Thg= X^T X$, where $X$ is formed from $g^Thg$ by replacing all pairs of $t$ as submatrices  $\begin{psmallmatrix} t& 0 \\ 0 & t\end{psmallmatrix}$ in $g^Thg$ by $\begin{psmallmatrix} 1&-(t-1)^{1/2} \\ (t-1)^{1/2} & 1 \end{psmallmatrix}$, and all the submatrices $\begin{psmallmatrix} 0 &1 \\ 1& 0 \end{psmallmatrix}$ in $g^Thg$ by $\begin{psmallmatrix} i &-\frac{1}{2} i \\1 & \frac{1}{2} \end{psmallmatrix}$. 
Therefore 
\[h = (Xg^{-1})^T (Xg^{-1}) \in  \{g^Tg: g \in \GL_n(F)\}.\]
\end{proof}
\end{lemma}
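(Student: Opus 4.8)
The plan is to prove the nontrivial (``if'') direction by reducing $h$ to the normal form of Lemma~\ref{charsym1} and then trivialising one small diagonal block at a time. The ``only if'' direction is immediate: if $h = g^Tg$ then $\det h = \det(g)^2$ is a square in $F$. For the converse, suppose $\det h$ is a square. The reduction step is this: $h \in \{g^Tg : g \in \GL_n(F)\}$ as soon as we can find $g_1 \in \GL_n(F)$ with $g_1^T h g_1$ of the form $X^TX$ for some $X \in \GL_n(F)$, because then $h = (X g_1^{-1})^T(X g_1^{-1})$. By Lemma~\ref{charsym1} choose $g_1$ so that $N := g_1^T h g_1$ is a symmetric affine permutation matrix whose off-diagonal nonzero entries are all $1$ and whose diagonal entries are $1$'s and $t$'s; it then remains to write $N = X^TX$.

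The second step is a parity count. Since $\det N = \det(g_1)^2\det h$ is still a square in $F$, and $\det N = \pm\, t^{m}$ where $m$ is the number of diagonal entries of $N$ equal to $t$ (the sign being the sign of the underlying permutation), the preceding lemma characterising squares in $F$ — together with the fact that $-1$ is a square in $\K$ — forces $m$ to be even. Fix a matching of these $t$-entries into $m/2$ pairs: for a pair at positions $(x,x)$ and $(y,y)$ the principal submatrix $N|_{\{x,y\}\times\{x,y\}}$ is $\begin{psmallmatrix} t & 0 \\ 0 & t\end{psmallmatrix}$. The off-diagonal $1$'s occur in transpose pairs $(x,y),(y,x)$ with $N|_{\{x,y\}\times\{x,y\}} = \begin{psmallmatrix} 0 & 1 \\ 1 & 0\end{psmallmatrix}$, and the remaining diagonal $1$'s are $1\times1$ blocks $(1)$. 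Hence $N$ splits, on disjoint sets of indices, into blocks of these three shapes, and it suffices to factor each block as $X_0^TX_0$ and recombine the factors coordinate-wise.

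The third step is the explicit factorisations. A $1\times1$ block is $(1) = (1)^T(1)$. For the antidiagonal block, $\begin{psmallmatrix} 0 & 1 \\ 1 & 0\end{psmallmatrix} = H^TH$ with $H = \begin{psmallmatrix} i & -i/2 \\ 1 & 1/2\end{psmallmatrix}$, using $i = \sqrt{-1} \in \K$. For the doubled-$t$ block, the identity $\begin{psmallmatrix} t & 0 \\ 0 & t\end{psmallmatrix} = \begin{psmallmatrix} 1 & (t-1)^{1/2} \\ -(t-1)^{1/2} & 1\end{psmallmatrix}\begin{psmallmatrix} 1 & -(t-1)^{1/2} \\ (t-1)^{1/2} & 1\end{psmallmatrix}$ works, where $(t-1)^{1/2} \in F$ exists since $\ord(t-1) = 0$ is even. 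Replacing each such submatrix of $N$ by the corresponding right-hand factor (and leaving diagonal $1$'s as $1$'s) assembles into a matrix $X \in \GL_n(F)$ with $X^TX = N$, and $Y := X g_1^{-1}$ then satisfies $Y^TY = h$.

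I expect the only genuinely non-routine point to be recognising why the square-class of $\det h$ is the exact obstruction. A lone $t$ on the diagonal cannot be removed: the $1\times1$ equation $(t) = X^TX$ is unsolvable since $t$ is not a square in $F$. What saves the day is that a \emph{pair} of $t$'s can be rotated away via the off-diagonal-square-root identity above, so the decisive quantity is the parity of the number of $t$'s — and this parity is precisely whether $\det N$, equivalently $\det h$, is a square. Everything else (the reduction via Lemma~\ref{charsym1}, the determinant computation, the $2\times2$ checks) is routine bookkeeping.
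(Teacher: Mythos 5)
Your proposal is correct and follows essentially the same route as the paper: reduce via Lemma~\ref{charsym1} to a symmetric affine permutation matrix $N$ with $1$'s off the diagonal and $1$'s or $t$'s on the diagonal, deduce from the square determinant that the number of $t$'s is even, and then factor each $1\times1$ and $2\times2$ block explicitly before recombining. The only difference is cosmetic: you track the sign in $\det N = \pm t^m$ and note the existence of $(t-1)^{1/2}$ explicitly, which the paper leaves implicit.
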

As before by the orbit-stablizer theorem, the map $\O_2(F)g \mapsto g^Tg$ is a bijection between 
$\O_2(F) \backslash \GL_2(F)$ and $ \{g^Tg: g \in \GL_2(F)\}$.
We want to find all the distinct orbits of $\O_2(F) \backslash \GL_2(F)/B_2$ by considering orbits of the right action of $b \in B_2$ acting on $x \in \{g^Tg : g \in \GL_2(F)\}$ defined by $x \cdot b := b^Txb$. The same right action of $b \in B_2$ can act on the set of all symmetric matrices in $\GL_2(F)$ as well.

The following proposition serves two purposes. First, the proposition provides the inductive step that will be used in the proof of Theorem~\ref{ortho-orbit-thm-intro}. Second, the proposition gives a list of orbit representatives in $\{g^Tg : g \in \GL_2(F)\}$ for this $B_2$-action. We will show later that these representatives belong to different orbits.

\begin{proposition}
\label{orthogonaln=2}
Each $B_2$-orbit in the set of symmetric matrices in $\GL_2(F)$ contains at least one element of the form
 $\begin{pmatrix} t^{m_1} & 0 \\ 0 & t^{m_2} \end{pmatrix}$ with $m_1,m_2 \in \ZZ$ or $\begin{pmatrix} 0& t^k \\ t^k & 0 \end{pmatrix}$ for $k \in \ZZ$.

\begin{proof}
Notice that for any symmetric matrix $\begin{psmallmatrix} a&c \\ c & d \end{psmallmatrix} \in \GL_n(F)$ and any $\alpha, \beta \in F^*$,
\begin{align}\label{ortho-remove-units}
 \begin{pmatrix} \alpha &0 \\ 0 & \beta \end{pmatrix}\begin{pmatrix} a&c \\ c & d \end{pmatrix} \begin{pmatrix} \alpha &0 \\ 0 & \beta \end{pmatrix} = \begin{pmatrix} a\alpha^2 &c\alpha\beta \\ c\alpha\beta & d\beta^2 \end{pmatrix}.
 \end{align}
For any $p(t) \in F$, write $p(t) = t^{\ord(p)} u_p$ with $u_p \in A^* = A \backslash tA$.
Set $m_1 = \ord(a)$, $m_2 = \ord(d)$. 
If $a$ or $d$ is $0$, we can set $(\alpha, \beta)  = (1, u_d^{-1/2})$ or $(u_a^{-1/2} ,1)$ respectively to assume $a = t^{m_1}$ or $d = t^{m_2}$ respectively. Also assume $k = \ord(c)$.

For any symmetric matrix $\begin{psmallmatrix} a&c \\ c & d \end{psmallmatrix} \in \{g^Tg: g\in \GL_2(F) \}$, we want to simplify the matrix as much as possible by a series of right $B_2$-actions. We divide our investigation into cases, with the results summarized in Table~\ref{table-ortho-rep}. 
\ben
\item Suppose $c = 0$. Then by putting $(\alpha, \beta)  = (u_a^{-1/2}, u_d^{-1/2})$ in the left hand side of \eqref{ortho-remove-units}, the right hand side equals to $\begin{psmallmatrix} t^{m_1} & 0 \\ 0 & t^{m_2} \end{psmallmatrix}$.

\item Suppose $ c \neq 0$ and $a = d = 0$. In this case put $(\alpha, \beta)  = (1, u_b^{-1/2})$ in the left hand side of \eqref{ortho-remove-units}, and the right hand side is equal to $\begin{psmallmatrix} 0 & t^k \\ t^k & 0\end{psmallmatrix}$.

\item Suppose $c \neq 0$, $d=0$ and $a \neq 0$. We first put $(\alpha, \beta)  = (u_a^{-1/2}, 1)$ in the left hand side of \eqref{ortho-remove-units} to obtain the matrix $\begin{psmallmatrix}t^{m_1} & cu_a^{-1/2} \\ cu_a^{-1/2} & 0 \end{psmallmatrix}$ on the right hand side. 
Denote $\gamma =cu_a^{-1/2}$. Then putting $(\alpha, \beta)  = (1, u_{\gamma}^{-1})$ and replacing $\begin{psmallmatrix} a&c \\ c & d \end{psmallmatrix}$ by $\begin{psmallmatrix}t^{m_1} & cu_a^{-1/2} \\ cu_a^{-1/2} & 0 \end{psmallmatrix}$ on the left hand side of \eqref{ortho-remove-units}, the right hand side equals to $\begin{psmallmatrix} t^{m_1} & t^k \\ t^k & 0\end{psmallmatrix}$.
\ben
\item If $k \geq m_1$, then $b^T \begin{psmallmatrix} t^{m_1} & t^k \\ t^k & 0\end{psmallmatrix} b = \begin{psmallmatrix} t^{m_1} & 0 \\ 0 & t^{2k-m_1} \end{psmallmatrix}$, where $b = \begin{psmallmatrix} 1 & -t^{k-m_1} \\ 0 & 1\end{psmallmatrix} \in B_2$.

\item If $k < m_1$, then $b^T \begin{psmallmatrix} t^{m_1} & t^k \\ t^k & 0\end{psmallmatrix} b = \begin{psmallmatrix}0 & t^k \\ t^k & 0 \end{psmallmatrix}$, where $b = \begin{psmallmatrix} 1 & 0\\ -\frac{1}{2} t^{m_1-k} & 1\end{psmallmatrix} \in B_2$.
\een
\item Suppose $c \neq 0$, $d\neq 0$ and $a=0$. The situation is symmetric to case 3. Using the equation \eqref{ortho-remove-units} to transform the matrix $\begin{psmallmatrix} a&c \\ c & d \end{psmallmatrix}$ into $\begin{psmallmatrix} 0 & t^k \\ t^k & t^{m_2}\end{psmallmatrix}$.
\ben
\item If $k > m_2$, then $b^T \begin{psmallmatrix} 0 & t^k \\ t^k & t^{m_2}\end{psmallmatrix} b = \begin{psmallmatrix} t^{2k-m_2} & 0 \\ 0 & t^{m_2} \end{psmallmatrix}$, where $b = \begin{psmallmatrix} 1 & 0 \\ -t^{k-m_2}  & 1\end{psmallmatrix} \in B_2$.
\item If $k \leq  m_2$, then $b^T \begin{psmallmatrix} 0 & t^k \\ t^k & t^{m_2}\end{psmallmatrix} b = \begin{psmallmatrix} 0 & t^k \\ t^k & 0 \end{psmallmatrix}$, where $b = \begin{psmallmatrix} 1 & -\frac{1}{2} t^{m_2-k}\\  0& 1\end{psmallmatrix} \in B_2$.
\een

\item Suppose $a, b, d \neq 0$. We first put $(\alpha, \beta)  = (u_a^{-1/2}, u_d^{-1/2})$ in the left hand side of \eqref{ortho-remove-units} to obtain the matrix $\begin{psmallmatrix}t^{m_1} & \gamma \\ \gamma & t^{m_2} \end{psmallmatrix}$ on the right hand side, where $\gamma = cu_a^{-1/2}u_d^{-1/2}$. 
Apply $\diag( u_a^{-1/2}, u_d^{-1/2})$ first to replace $d$ by $t^n$, $b$ by $\beta =bu_a^{-1/2}u_d^{-1/2}$, $a$ by $t^m$. 
We further divide the investigation into several cases:
\ben
\item Suppose $m_1 \geq m_2$. 
\ben
\item Suppose $\ord (\gamma) \geq m_1 \geq m_2$. then $b^T \begin{psmallmatrix} t^{m_1} & \gamma \\ \gamma & t^{m_2} \end{psmallmatrix} b = \begin{psmallmatrix}  t^{m_1} & 0 \\ 0 & t^{m_2} - \gamma^2/t^{m_1} \end{psmallmatrix}$, where $b = \begin{psmallmatrix} 1 & -\gamma / t^{m_1} \\ 0  & 1\end{psmallmatrix} \in B_2$.
Applying equation \eqref{ortho-remove-units} with $(\alpha, \beta)  = (1, u_{t^{m_2} - \gamma^2/t^{m_1}})$ and $\begin{psmallmatrix}  t^{m_1} & 0 \\ 0 & t^{m_1} - \gamma^2/t^{m_1} \end{psmallmatrix}$ in place of  $\begin{psmallmatrix}  a & c \\ c& d\end{psmallmatrix}$ on the left hand side to obtain $\begin{psmallmatrix} t^{m_1} & 0 \\ 0 & t^{m_3} \end{psmallmatrix}$ on the right hand side, where $m_3 = \ord(t^{m_2} - \gamma^2/t^{m_1})$.

Suppose $\ord (\gamma) \leq m_2 \leq m_1$. Observe that 
\[ \begin{pmatrix} 1 & 0 \\ p & 1\end{pmatrix}\begin{pmatrix} t^{m_1} &\gamma \\ \gamma & t^{m_2} \end{pmatrix} \begin{pmatrix} 1 &p   \\ 0 & 1 \end{pmatrix} = \begin{pmatrix} t^{m_1} &\gamma + pt^{m_1}\\ \gamma + pt^{m_1} & p^2t^{m_1} + 2p\gamma + t^{m_2} \end{pmatrix}. \] 

We want to eliminate the $(2,2)$ entry, so we set $p$ as unknown and solve the equation $p^2t^{m_1} + 2p\gamma + t^{m_2} = 0$. We have \[p = (-2\gamma \pm (4\gamma^2-4t^{m_1+ m_2})^{1/2})/2t^{m_1}.\]
 But notice that 
\begin{align*}
(4\gamma^2-4t^{m_1+ m_2})^{1/2} &= 2\gamma (1-t^{m_1+ m_2}/\gamma^2)^{1/2}\\
& = 2\gamma (1- t^{m_1+ m_2}/2\gamma^2 + \dots) \\
&= 2\gamma - t^{m_1+ m_2}/\gamma + \dots
\end{align*}

Taking the plus sign in the formula for $p$, we have $p = -t^{m_2}/ (2\gamma) +$ higher degree terms. Therefore $p$ is in $A$ with order $m_2-\ord(\gamma) \geq 0 $, and hence $\begin{psmallmatrix} 1 &p   \\ 0 & 1 \end{psmallmatrix} \in B_2$. 
Let $\delta := \gamma + pt^{m_1}$. Depending on whether $\ord(\delta) < m_1$ or $\ord(\delta) \geq m_1$, applying right $B_2$-actions on the matrix $\begin{psmallmatrix} t^{m_1} & \gamma + pt^{m_1} \ \gamma + pt^{m_1} & 0 \end{psmallmatrix}$ results in one of the following elements in $\GL_2(F)$:
\[ \begin{pmatrix}0& t^{\ord (\delta) } \\  t^{\ord (\delta)}& 0 \end{pmatrix}  \text{ or }  \begin{pmatrix} t^{m_1} & 0 \\ 0 & t^{2\ord(\delta)-m_1} \end{pmatrix}. \]
from $\begin{psmallmatrix} t^{m_1} &\gamma + pt^{m_1}\\ \gamma + pt^{m_1} &0 \end{psmallmatrix}$ using the equation \eqref{ortho-remove-units} and a right $B_2$-action as stated in case 3(a) or 3(b), depending on whether $\ord(\delta) < m_1$ or $\ord(\delta) \geq m_1$.

\item Suppose $m_1 > \ord(\gamma) >  m_2$.  Then $b^T \begin{psmallmatrix} t^{m_1} & \gamma \\ \gamma & t^{m_2} \end{psmallmatrix} b = \begin{psmallmatrix} t^{m_1} - \gamma^2/t^{m_2} & 0 \\ 0  & t^{m_2}\end{psmallmatrix}$, where 
\[b = \begin{psmallmatrix} 1 & 0  \\ -\gamma / t^{m_2}  & 1\end{psmallmatrix} \in B_2.\]

Using case (1), we can reduce the resulting matrix to $\begin{psmallmatrix} t^{m_4} & 0 \\ 0 & t^{m_2} \end{psmallmatrix}$, where $m_4 = \ord( t^{m_1} - \gamma^2/t^{m_2} )$.
\een

\item Suppose $m_1<m_2$. This case is symmetric to case 5(a), so we just list the results in Table~\ref{table-ortho-rep}.
\een
\een
Notice that $\ord (\gamma) = \ord(cu_a^{-1/2}u_d^{-1/2})  = \ord(c) = k$. 
Table~\ref{table-ortho-rep} summarises all the cases.
\begin{table}[htbp]
\begin{center}
\begin{tabular}{ c c  }
Distinct cases for $a, c, d$ in $\left(\begin{smallmatrix} a & c \\ c & d \end{smallmatrix}\right)$ & Orbit representatives    \\ 
$c = 0$ & $\begin{psmallmatrix} t^{m_1} & 0 \\ 0 & t^{m_2} \end{psmallmatrix}$  \\  
$c \neq 0$, $a = d = 0$ & $\begin{psmallmatrix} 0 & t^k \\ t^k & 0 \end{psmallmatrix}$   \\
$a, c \neq 0$, $d = 0$, $k \geq m_1$ & $\begin{psmallmatrix} t^{m_1} & 0 \\ 0 & t^{2k-m_1} \end{psmallmatrix}$  \\
$a, c \neq 0$, $d = 0$, $k < m_1$ & $\begin{psmallmatrix} 0 & t^k \\ t^k & 0 \end{psmallmatrix}$ \\
$c, d \neq 0$, $a = 0$, $k > m_2$ & $\begin{psmallmatrix} t^{2k-m_2} & 0 \\ 0 & t^{m_2} \end{psmallmatrix}$ \\
$c, d \neq 0$, $a = 0$, $k \leq m_2$ & $\begin{psmallmatrix} 0 & t^k \\ t^k & 0  \end{psmallmatrix}$ \\
$a, c, d \neq 0$, $k \geq m_1 \geq m_2$ & $\begin{psmallmatrix} t^{m_1} & 0 \\ 0 & t^{m_3} \end{psmallmatrix}$\\
$a, c, d \neq 0$, $ m_1 \geq m_2 \geq k$  &  $\begin{psmallmatrix} 0& t^{\ord (\delta) } \\  t^{\ord (\delta)}\end{psmallmatrix}$ or $\begin{psmallmatrix} t^{m_1} & 0 \\ 0 & t^{2\ord(\delta)-m_1} \end{psmallmatrix}$ \\
$a, c, d \neq 0$, $m_1 > k > m_2$  & $\begin{psmallmatrix} t^{m_3} & 0 \\ 0 & t^{m_2} \end{psmallmatrix}$ \\
$a, c, d \neq 0$, $m_1< m_2\leq k$ & $\begin{psmallmatrix} t^{m_4} & 0 \\ 0 & t^{m_2} \end{psmallmatrix}$ \\
$a, c, d \neq 0$, $k < m_1< m_2 $ & $\begin{psmallmatrix} 0 & t^k \\ t^k & 0 \end{psmallmatrix}$\\
$a, c, d \neq 0$, $m_1\leq k < m_2 $ & $\begin{psmallmatrix} t^{m_1} & 0 \\ 0 & t^{m_3} \end{psmallmatrix}$
\end{tabular}
\caption{Orbit representatives of the right $B_2$-action on the set $\{g^Tg: g\in \GL_2(F) \}$. Here $m_1 = \ord(a)$, $m_2 = \ord(d)$, $k = \ord (c)$, $\gamma = cu_a^{-1/2}u_d^{-1/2}$, $\delta= \gamma + pt^{m_1}$ where $p \in F$ is a root of certain quadratic equation in $F$, $m_3 = \ord(t^{m_2} - \gamma^2/t^{m_1})$ and $m_4 = \ord( t^{m_1} - \gamma^2/t^{m_2} )$. }\label{table-ortho-rep}
\end{center}
\end{table}
\end{proof}
\end{proposition}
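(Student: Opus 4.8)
The plan is to normalize an arbitrary symmetric matrix $\begin{psmallmatrix} a & c \\ c & d\end{psmallmatrix}\in\GL_2(F)$ by repeatedly applying the right $B_2$-action $x\mapsto b^{T}xb$, using only two families of moves. The first is conjugation by a diagonal $b=\diag(\alpha,\beta)$ with $\alpha,\beta\in A^{*}$, which rescales $(a,c,d)$ to $(a\alpha^{2},c\alpha\beta,d\beta^{2})$ as in \eqref{ortho-remove-units}; since $\K$ is quadratically closed, every unit of $A$ has a square root in $A$, so these moves let me replace each nonzero diagonal entry by a pure power $t^{m_1}$, $t^{m_2}$ of $t$, and clear the leading unit of $c$ whenever convenient. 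The second family consists of the unipotent generators $\begin{psmallmatrix}1 & t^{j}u\\0 & 1\end{psmallmatrix}$ $(j\ge 0)$ and $\begin{psmallmatrix}1 & 0\\t^{j}u & 1\end{psmallmatrix}$ $(j\ge 1)$, whose action performs symmetric row/column operations. Throughout set $m_1=\ord(a)$, $m_2=\ord(d)$, $k=\ord(c)$ whenever the relevant entries are nonzero.

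First I would dispose of the degenerate configurations. If $c=0$ the matrix is already $\diag(t^{m_1},t^{m_2})$ after rescaling, and if $a=d=0$ with $c\ne 0$ it is already $\begin{psmallmatrix}0 & t^{k}\\t^{k} & 0\end{psmallmatrix}$. If exactly one diagonal entry vanishes, say $d=0$ and $a,c\ne 0$ (the other case being symmetric), I first rescale to $\begin{psmallmatrix}t^{m_1} & t^{k}\\t^{k} & 0\end{psmallmatrix}$ and then remove an off-diagonal entry: conjugating by $\begin{psmallmatrix}1 & -t^{k-m_1}\\0 & 1\end{psmallmatrix}\in B_2$ when $k\ge m_1$ yields $\diag(t^{m_1},t^{2k-m_1})$, while conjugating by $\begin{psmallmatrix}1 & 0\\-\tfrac{1}{2} t^{m_1-k} & 1\end{psmallmatrix}\in B_2$ when $k<m_1$ yields $\begin{psmallmatrix}0 & t^{k}\\t^{k} & 0\end{psmallmatrix}$; in each case the exponent conditions are exactly what make the unipotent matrix lie in $B_2$.

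The substantive case is $a,c,d$ all nonzero. After rescaling take $a=t^{m_1}$, $d=t^{m_2}$, $c=\gamma$ with $\ord(\gamma)=k$, and assume $m_1\ge m_2$ (the reverse is symmetric). If $k\ge m_1$, conjugation by $\begin{psmallmatrix}1 & -\gamma/t^{m_1}\\0 & 1\end{psmallmatrix}\in B_2$ clears both off-diagonal entries and leaves $\diag\!\big(t^{m_1},\,t^{m_2}-\gamma^{2}/t^{m_1}\big)$; the second entry is nonzero since the matrix is invertible, so after one more rescaling this is diagonal of the required form. If $m_1>k>m_2$, conjugating instead by the lower unipotent $\begin{psmallmatrix}1 & 0\\-\gamma/t^{m_2} & 1\end{psmallmatrix}\in B_2$ clears both off-diagonal entries and leaves $\diag\!\big(t^{m_1}-\gamma^{2}/t^{m_2},\,t^{m_2}\big)$, again diagonal of the required form after rescaling.

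The delicate subcase is $k\le m_2\le m_1$, where (having already settled $k\ge m_1$) we may assume $k<m_1$. Here no single off-diagonal clearing is available with a guaranteed element of $B_2$, so I conjugate by $b=\begin{psmallmatrix}1 & p\\0 & 1\end{psmallmatrix}$ and choose $p$ to kill the resulting $(2,2)$ entry, i.e.\ so that $p^{2}t^{m_1}+2p\gamma+t^{m_2}=0$. I expect this to be the main obstacle: one must check that the root $p=\big(-2\gamma+(4\gamma^{2}-4t^{m_1+m_2})^{1/2}\big)/2t^{m_1}$ actually lies in $A$, so that $b\in B_2$. Expanding by the binomial series — legitimate because $\ord(t^{m_1+m_2}/\gamma^{2})=m_1+m_2-2k\ge 1$ in this subcase — gives $(4\gamma^{2}-4t^{m_1+m_2})^{1/2}=2\gamma-t^{m_1+m_2}/\gamma+\cdots$, whence $p=-t^{m_2}/(2\gamma)+\cdots$ has order $m_2-\ord(\gamma)\ge 0$, as needed. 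Writing $\delta=\gamma+pt^{m_1}$, the matrix becomes $\begin{psmallmatrix}t^{m_1} & \delta\\\delta & 0\end{psmallmatrix}$, which is exactly the one-vanishing-diagonal shape already treated, finishing this branch. Finally, the procedure terminates: every branch either outputs a matrix of the claimed form or reduces to an earlier, already-settled case and never loops back. Assembling the branches gives Table~\ref{table-ortho-rep} and the proposition.
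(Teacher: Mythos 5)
Your proposal follows the same route as the paper: normalize units via diagonal conjugation \eqref{ortho-remove-units}, dispatch the degenerate cases, and when all of $a,c,d$ are nonzero with $m_1\ge m_2$, split on the position of $k=\ord(\gamma)$ relative to $m_1,m_2$, solving the quadratic $p^2t^{m_1}+2p\gamma+t^{m_2}=0$ in the hard branch and feeding the result back into the one-vanishing-diagonal case. The only real difference is a small improvement in care: you explicitly note that in the branch $k\le m_2\le m_1$ one may assume $k<m_1$ (the boundary $k=m_1=m_2$ being absorbed by the $k\ge m_1$ branch), so that $\ord(t^{m_1+m_2}/\gamma^2)=m_1+m_2-2k\ge 1$ and the binomial expansion of $(1-t^{m_1+m_2}/\gamma^2)^{1/2}$ is a genuine power series. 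The paper states the overlapping inequalities $\ord(\gamma)\ge m_1\ge m_2$ and $\ord(\gamma)\le m_2\le m_1$ without flagging the need to exclude the overlap, and you also supply the brief justification that $t^{m_2}-\gamma^2/t^{m_1}$ is nonzero because the matrix is invertible. These are minor polishings of the same argument rather than a different method.
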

The above proposition together with Lemma~\ref{charsym2} implies the following corollary.
\begin{corollary}
Each $B_2$-orbit in the set $\{g^Tg: g\in \GL_2(F)\}$ contains at least one element of the form
 $\begin{pmatrix} t^{m_1} & 0 \\ 0 & t^{m_2} \end{pmatrix}$ with $m_1,m_2 \in \ZZ$, $m_1 + m_2$ even, or $\begin{pmatrix} 0& t^k \\ t^k & 0 \end{pmatrix}$ for $k \in \ZZ$.
 \end{corollary}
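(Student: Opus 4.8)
The plan is to deduce this immediately from Proposition~\ref{orthogonaln=2} together with Lemma~\ref{charsym2}, the only genuinely new observation being that the set $X := \{g^Tg : g \in \GL_2(F)\}$ is stable under the right $B_2$-action $x \cdot b = b^T x b$. Indeed, if $x = g^Tg$ and $b \in B_2$, then $b^T x b = (gb)^T(gb) \in X$. Consequently every $B_2$-orbit $\mathcal{O}$ occurring in the statement is entirely contained in $X$, while at the same time it is a $B_2$-orbit inside the larger set of all symmetric matrices in $\GL_2(F)$, so Proposition~\ref{orthogonaln=2} applies to it.

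First I would invoke Proposition~\ref{orthogonaln=2} to choose a representative $r \in \mathcal{O}$ that is either $\begin{psmallmatrix} t^{m_1} & 0 \\ 0 & t^{m_2}\end{psmallmatrix}$ with $m_1, m_2 \in \ZZ$, or $\begin{psmallmatrix} 0 & t^k \\ t^k & 0\end{psmallmatrix}$ with $k \in \ZZ$. In the antidiagonal case there is nothing left to prove, as this is already one of the two permitted forms. In the diagonal case it remains only to verify the parity condition $m_1 + m_2 \in 2\ZZ$. Since $\mathcal{O} \subseteq X$, the chosen representative $r = \begin{psmallmatrix} t^{m_1} & 0 \\ 0 & t^{m_2}\end{psmallmatrix}$ itself lies in $X$; it is symmetric and invertible, so Lemma~\ref{charsym2} yields that $\det(r) = t^{m_1+m_2}$ is a square in $F$. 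By the lemma characterizing squares in $F$ (an element of $F$ is a square precisely when its order is even), $m_1 + m_2 = \ord(t^{m_1+m_2})$ is even, which is exactly what is required.

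Since the argument only assembles two facts already established in the excerpt, I do not expect a real obstacle here. The single point deserving a moment's attention is the verification that $X$ is $B_2$-stable: this is what guarantees that the representative produced by Proposition~\ref{orthogonaln=2} still lies in $X$, and hence is subject to the determinant-is-a-square constraint of Lemma~\ref{charsym2} that forces the parity condition in the diagonal case.
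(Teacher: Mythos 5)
Your argument is exactly the one the paper intends when it writes that the corollary follows from Proposition~\ref{orthogonaln=2} together with Lemma~\ref{charsym2}: apply the proposition to get a representative in one of the two forms, note that $\{g^Tg : g \in \GL_2(F)\}$ is $B_2$-stable so the representative lies in it, and then use Lemma~\ref{charsym2} plus the square-order lemma to force $m_1+m_2$ even in the diagonal case. The proposal is correct and matches the paper's approach.
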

We have not yet shown that $\begin{pmatrix} t^m & 0 \\ 0 & t^n \end{pmatrix}$ with $m,n \in \ZZ$, $m+n$ even, and $\begin{pmatrix} 0& t^n \\ t^n & 0 \end{pmatrix}$ for $n \in \ZZ$ are in different orbits under the right action of $B_2$. We will prove this result for general $n$ in the proof of Theorem~\ref{ortho-orbit-thm-intro}.

Recall that we identify each element in the extended affine symmetric group $\widetilde S_n^+$ as an affine permutation matrix; see the paragraph before Remark~\ref{affine-perm-as-matrix}. We also use the notation $\tau^{\mathbf{c}} = \diag(t^{c_1}, t^{c_2}, \dots, t^{c_n})$ for $\mathbf{c} = (c_1, c_2, \dots, c_n)$.

We now recall the necessary definitions for the combinatorial indexing set for the double cosets in $\O_n(F) \backslash \GL_n(F) / B_n$. 
Recall from Definition~\ref{def-extended-aff-twisted-inv-1} that the set $\affinvol_n$ consists of all symmetric $n$-by-$n$ affine permutation matrices, and the set $\eaffinvol_n$ consists of all elements in $\affinvol_n$ for which  the sum of the powers of $t$ is even.
\begin{remark}
Define $^*$ to be the automorphism on $\bar{w}\tau^{\mathbf{c}} \in \widetilde S_n^+$ by $\bar{w}\tau^{\mathbf{c}} \mapsto \bar{w}\tau^{\mathbf{-c}}$. Then the set $\affinvol_n$ consists of all affine permutations $w$ with the property $w^* = w^{-1}$, and therefore we can call these affine permutations as \defn{extended affine twisted involutions}. Similarly, the set $\eaffinvol_n$ consists of all affine permutations in $\affinvol_n$ for which $c_1+c_2+\dots+c_n$ is an even integer. Therefore we can also refer to elements in $\eaffinvol_n$ as \defn{even extended affine twisted involutions}.
\end{remark}

We shall need the following lemma to show that different elements in $\eaffinvol_n$ lie in different orbits of a certain right $B_n$-action on the set $\{g^Tg : g \in G\}$.

\begin{lemma} \label{uniq-orbit-ortho}
Suppose $w, w' \in \eaffinvol_n$. Let $B_{n}'$ be the subgroup of lower triangular matrices modulo $t$ in $\GL_n(A)$. If $b_-wb = w'$ for some $b_- \in B_{n}'$ and $b \in B_n$, then $w = w'$.

\begin{proof}
By the affine Bruhat decomposition \cite[Theorem 2.16]{IM}, it holds that $w = w'$.
\end{proof}
\end{lemma}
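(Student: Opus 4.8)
The plan is to reduce the statement to the ordinary affine Bruhat decomposition $G = \bigsqcup_{v} B_n v B_n$ (with $v$ ranging over affine permutation matrices), which is recalled in the introduction and is \cite[Theorem 2.16]{IM}. Note first that the hypotheses that $w, w'$ be symmetric with even $t$-exponent sum are not actually needed here: all that is used below is that elements of $\eaffinvol_n$ are affine permutation matrices, i.e. the canonical matrix representatives of elements of $\widetilde S_n^+$.

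\emph{Step 1 (reduce to double cosets).} From $b_- w b = w'$ with $b_- \in B_n'$ and $b \in B_n$ we get $w' \in B_n' w B_n$, hence $B_n' w B_n = B_n' w' B_n$. So it suffices to show that distinct affine permutation matrices lie in distinct $(B_n', B_n)$-double cosets.

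\emph{Step 2 (relate $B_n'$ to $B_n$).} Let $w_0 \in S_n$ be the permutation matrix of $i \mapsto n+1-i$, so $w_0^2 = 1_n$. Conjugation by $w_0$ reverses the order of both rows and columns, and therefore turns ``upper triangular modulo $t$'' into ``lower triangular modulo $t$''; concretely $(w_0 M w_0)_{ij} = M_{n+1-i,\,n+1-j}$, so $M \bmod t$ is upper triangular iff $w_0 M w_0 \bmod t$ is lower triangular. Hence $w_0 B_n w_0 = B_n'$. Consequently left multiplication by $w_0$ carries the double coset $B_n' g B_n$ onto $B_n (w_0 g) B_n$ (using $w_0 B_n' = B_n w_0$), and this induces a bijection $B_n' \backslash G / B_n \xrightarrow{\ \sim\ } B_n \backslash G / B_n$. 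Applying this to Step 1 gives $B_n (w_0 w) B_n = B_n (w_0 w') B_n$, and $w_0 w$, $w_0 w'$ are again affine permutation matrices (a product of a permutation matrix with a monomial matrix whose nonzero entries are powers of $t$).

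\emph{Step 3 (conclude).} By the affine Bruhat decomposition, affine permutation matrices form a set of pairwise distinct representatives of $B_n \backslash G / B_n$: if two affine permutation matrices differed by an element of $\hat T$, that element would be a diagonal matrix whose entries are simultaneously powers of $t$ and units of $\KPt$, forcing it to equal $1_n$. Therefore $w_0 w = w_0 w'$, i.e. $w = w'$.

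The only genuinely nontrivial input is the affine Bruhat decomposition itself; the one point to set up with care is the passage from the mixed $(B_n', B_n)$-cosets to the ordinary $(B_n,B_n)$-cosets via $w_0$ in Step 2, but this is entirely elementary, so I do not expect a real obstacle. (Alternatively, one may quote directly the mixed decomposition $G = \bigsqcup_v B_n' v B_n$ coming from the affine $BN$-pair structure, bypassing $w_0$ altogether.)
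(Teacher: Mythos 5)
Your argument is correct and rests on the same key input the paper uses — the affine Bruhat decomposition from \cite[Theorem 2.16]{IM}; the paper simply cites it in one line, while you have usefully spelled out the elementary reduction from $(B_n',B_n)$-double cosets to ordinary $(B_n,B_n)$-double cosets via conjugation by $w_0$, together with the observation that distinct affine permutation matrices represent distinct classes in $N_G(\hat T)/\hat T$. No gap; this is essentially the paper's proof made explicit.
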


Consider the following identities:
for $a,b$ being odd integers, it holds that
\begin{equation} \label{diag22blockbreak}
\begin{pmatrix} t^a& 0 \\ 0 & t^b\end{pmatrix} =  \begin{pmatrix} t^{\frac{a-1}{2}}&t^{\frac{a-1}{2}} (t-1)^{\frac{1}{2}}\\  -(t-1)^{\frac{1}{2}}t^{\frac{b-1}{2}} & t^{\frac{b-1}{2}} \end{pmatrix}\begin{pmatrix} t^{\frac{a-1}{2}}&-(t-1)^{\frac{1}{2}}t^{\frac{b-1}{2}} \\ t^{\frac{a-1}{2}} (t-1)^{\frac{1}{2}} & t^{\frac{b-1}{2}} \end{pmatrix}, 
\end{equation}
and for any integer $a$, it holds that
\begin{equation}\label{antidiag22blockbreak}
 \begin{pmatrix} 0& t^a \\ t^a & 0\end{pmatrix} = \begin{pmatrix} i&1 \\ -i t^a/2 & t^a/2 \end{pmatrix}  \begin{pmatrix} i& -it^a/2 \\ 1& t^a/2 \end{pmatrix}.
 \end{equation} 
\begin{definition}\label{def-gw-ortho}
Given $w = \bar{w} \tau^{\mathbf{c}} \in \eaffinvol_n$, we define $g_w$ as follows:
\ben
\item[(i)] If $c_i$ is even and $\bar{w}(i) = i$, replace each $t^{c_i}$ in $w$ with $t^{c_i/2}$,
\item[(ii)] For each pair $(i,j)$ with odd $c_i$ and $c_j$ and $\bar{w}(i) = i$, $\bar{w}(j) = j$, replace the $2$-by-$2$ submatrices $\{i,j\} \times \{i,j\}$ in $w$ with the matrix $\begin{pmatrix} t^{\frac{a-1}{2}}&-(t-1)^{\frac{1}{2}}t^{\frac{b-1}{2}} \\ t^{\frac{a-1}{2}} (t-1)^{\frac{1}{2}} & t^{\frac{b-1}{2}} \end{pmatrix}$.
\item[(iii)] For each pair $(c_i, c_j) = (c_j, c_i)$ where $(i,j)$ forms a cycle in $\bar{w}$, we replace the $2$-by-$2$ submatrices $\{i,j\} \times \{i,j\}$ in $w$ with the matrix $\begin{pmatrix} i& -it^a/2 \\ 1& t^a/2 \end{pmatrix}$.
\een
\end{definition}

As an example if $ w= \begin{psmallmatrix}
t^4& 0 & 0 & 0 & 0\\
0 & 0 & 0 & t^{-2} & 0\\
0 & 0 & t^{-5} & 0 & 0\\
0 & t^{-2} & 0 & 0 & 0\\
0 & 0 & 0 & 0 & t^3
\end{psmallmatrix} = (2 \, 4) \tau^{(4, -2, -5, -2, 3)}$, then
\[ g_w = \begin{pmatrix}
t^2& 0 & 0 & 0 & 0\\
0 & i & 0 & -it^{-2}/2 & 0\\
0 & 0 & t^{-3} & 0 & -(t-1)^{\frac{1}{2}} t\\
0 & 1 & 0 & t^{-2}/2 & 0\\
0 & 0 &  t^{-3}(t-1)^{\frac{1}{2}} & 0 & t
\end{pmatrix}.\]
By the identities \eqref{diag22blockbreak} and \eqref{antidiag22blockbreak}, for any $w \in \eaffinvol_n$, it holds that
\begin{equation}\label{sqrt-ortho-affineperm}
g_w^Tg_w = w.
\end{equation}
Recall that $G = \GL_n(F)$ and $K = \O_n(F)$.
For $w \in \eaffinvol_n$, define $\mathcal{O}_w = K g_w B$. Proving Theorem~\ref{ortho-orbit-thm-intro} is equivalent to showing that \[w \mapsto \mathcal{O}_w \]
is a bijection between $\eaffinvol_n$ and $ K\backslash G/ B$.

\begin{proof}[Proof of Theorem~\ref{ortho-orbit-thm-intro}]
The fact that (1) the quotient $K\backslash G$ is in bijection with the set $\{g^Tg : g \in G\}$, which is a subset of symmetric matrices in $G$ with square determinant (by Lemma~\ref{gtransposeg_n}), and (2) the bijection $Kg \mapsto g^Tg$ commutes with the right $B$-actions $Kg \cdot b = Kgb$ and $(g^Tg)\cdot b = b^Tg^Tgb$, is similar to the classical case explained in the beginning of Section~\ref{sect3.1-ref}.

Suppose $h \in \{g^Tg : g \in G\}$. Among the non-zero entries in $h$ with the least order, denote $h_{ij}$ to be the $(i,j)$ entry of $h$ that is the leftmost and upmost along its column and row (i.e., all the non-zero entries above and to the left of $h_{ij}$ have strictly greater order, and all the non-zero entries below and to the right of $h_{ij}$ have orders not less than that of $h_{ij}$).

Recall that conjugating a matrix $h$ by an elementary matrix $u \in B$, that is, computing $u^Th u$, is the same as performing simultaneous row and column operations. These operations can be either:
(i) consecutively adding a $F$-multiple of a row to another row below and then adding the same $F$-multiple of the corresponding column to the corresponding column on the right, or
(ii) consecutively adding a $tF$-multiple of a row to another row above and then adding the same $tF$-multiple of the corresponding column to the corresponding column on the left.

Suppose $i=j$. Write $h_{ii} = t^{\ord(h_{ii})} u_{ii}$, where $u_{ii} \in A^*$, and denote 
\[d = \diag(1, 1, \dots, 1, u_{ii}^{-1/2}, 1, \dots, 1),\]
 where $u_{ii}^{-1/2}$ is located at the $i$-th entry. Notice that $d$ is well-defined because $\K$ is quadratically closed. Then the $(i,i)$ entry of $dgd$ is $t^{\ord(h_{ii})}$, and the order of all other entries in $dgd$ is the same as those in $g$. We can then conjugate by a series of elementary matrices $u_{j} \in B$, with $j \in [1,m]$ for some $m\in \NN$, to eliminate all the non-zero entries in the $i$-th row and column, using $t^{\ord(h_{ii})}$.
The resulting matrix is 
\[u_m^T\dots u_2^Tu_1^Tdgdu_1u_2\dots u_m,\]
 and $t^{\ord(h_{ii})}$ is the only non-zero entry in the $i$-th row and column of this matrix. Deleting the $i$-th row and column of the matrix results in an $(n-1)$-by-$(n-1)$ invertible matrix. 

Suppose $i\neq j$. Without loss of generality, we assume $i>j$ since $h_{ij} = h_{ji}$. Similar to the previous paragraph, write $h_{ij} = t^{\ord(h_{ij})} u_{ij}$, where $u_{ij} \in A^*$, and denote 
\[d = \diag(1, 1, \dots, 1, u_{ij}^{-1}, 1, \dots, 1),\]
 where $u_{ij}^{-1}$ is located at the $i$-th entry. Then the $(i,j)$ and $(j,i)$ entries of $dgd$ are $t^{\ord(h_{ij})}$, and the order of all other entries in $dgd$ is the same as the corresponding entries in $g$. We can then conjugate by a series of elementary matrices $u_{k} \in B$, with $k \in [1,m]$ for some $m\in \NN$, to eliminate all the non-zero entries in the $i$-th and $j$-th rows and columns, using $t^{\ord(h_{ij})}$, except for the $(i,i)$ and $(j,j)$ entries. The resulting matrix is 
 \[u_m^T\dots u_2^Tu_1^Tdgdu_1u_2\dots u_m,\]
  and the only possible non-zero entries in the $i$-th and $j$-th rows and columns are the $(i,i)$, $(i,j)$, $(j,i)$, and $(j,j)$ entries. Therefore, we can use the recipe in Proposition~\ref{orthogonaln=2} to the $i$-th and $j$-th columns and rows of $u_m^T\dots u_2^Tu_1^Tdgdu_1u_2\dots u_m$ by further conjugation with elements in $B$. This allows us to obtain a resulting matrix in which either the $(i,i)$ and $(j,j)$ entries or the $(i,j)$ and $(j,i)$ entries are non-zero in the $i$-th and $j$-th columns and rows, and both non-zero entries are equal to a power of $t$. In particular, if the $(i,j)$ and $(j,i)$ entries are non-zero, then they are equal to the same power of $t$. Deleting the $i$-th and $j$-th rows and columns of this matrix results in an $(n-2)$-by-$(n-2)$ invertible matrix.

Regardless of whether $i = j$ or $i \neq j$, we can observe that the resulting matrices above can be conjugated by elementary matrices $u \in B$ without altering the $i$-th and $j$-th rows and columns. Thus, through induction on the matrix size, we deduce the existence of $b \in B$ such that $b^Thb$ is an element of $\affinvol_n$. Given that $h \in \{g^Tg: g \in G\}$, we can conclude that $b^Thb \in \eaffinvol_n$ by Lemma~\ref{charsym2}. Write $h = g^Tg$ for some $g\in G$, and denote $b^Thb \in \eaffinvol_n$ as $w_g$.  

The above construction proves the existence of $w_g$ for $K g B$ with $g \in G$, and the construction of $g_w$ right before the theorem gives the desired property of $g_w$ from a given $w \in \eaffinvol_n$. Thus, we have proven that the map $w \mapsto \mathcal{O}_w$ is a surjection.

It remains to show that the representatives are disjoint under the right $B$-action on the set $\{g^Tg : g\in G\}$. Suppose $\mathcal{O}_w = \mathcal{O}_{w'}$ for some $w, w' \in \eaffinvol_n$. By the proof above, we see that $g_{w'}^{T}g_{w'} \in \{b^Tg_{w}^Tg_{w}b : b \in B\}$. Therefore, $w' = b^Twb$ for some $b \in B$, and by Lemma~\ref{uniq-orbit-ortho}, we have $w' = w$. Hence, the elements $w$ in $\eaffinvol_n$ correspond to distinct double cosets $Kg_wB$.
\end{proof}

\subsection{Choices of $K$ and $B$}\label{sect3.2-ref}

In the previous section, we considered $K = \O_n(F)$ as the fixed-point subgroup of $G$ under the Cartan involution $\theta: G \rightarrow G$, defined as $\theta(g) = (g^T)^{-1}$. However, as in Richardson and Springer's paper \cite{Richardson90}, it is sometimes convenient to consider a different Cartan involution $\theta_1: G \rightarrow G$ defined as $\theta_1(g) = w_0 (g^T)^{-1} w_0$, where

\[ w_0 = \begin{pmatrix} & & & & 1\\
&&&1&\\
& & \iddots & & \\
& 1 & & & \\
1 & & & & \end{pmatrix}. \] 

The fixed-point subgroup of this $\theta_1$ is a conjugate of $K$. More precisely, it is the orthogonal subgroup $L = eKe^{-1}$, where $e$ is an $n$-by-$n$ matrix satisfying $e^2 = w_0$. 

Richardson and Springer introduced this twisted version of the Cartan involution because it has the additional property of fixing $B$, the Borel subgroup. The reason for insisting on the involution $\theta_1$ fixing $B$ is that $B$ determines a system of positive roots $\Delta^+$ in the root system $\Delta$ with respect to a $\theta_1$-stable maximal torus $T \subset B$, and $\theta \Delta^+ = \Delta^+$. This allows us to consider the $\theta$-action on the root system and root subgroups in $G$, which facilitates calculations involving the closure of orbits $KgB$. For more details, refer to \cite{Springer}.

It is worth noting that the double cosets in $K \backslash G/ B$ and $L \backslash G/ B$ are in bijection because $K$ and $L$ are conjugate subgroups of $G$.

\subsection{Orbits of the special orthogonal group}\label{sect3.3-ref}

In this section, we focus on the orbits of the special orthogonal group over the affine flag variety.

Let $\SL_n(F)$ and $\SO_n(F)$ denote the special linear group and the special orthogonal group over formal Laurent series, respectively. In this subsection we denote $G = \SL_n(F)$ and $K = \SO_n(F)$. In particular, $G$ and $K$ are the subgroups of matrices with a determinant equal to $1$ in $\GL_n(F)$ and $\O_n(F)$, respectively.

We now let $B=B_n$ be the standard Iwahori subgroup of $G$, which consists of matrices in $\SL_n(A)$ that are upper triangular modulo $t$. The method of finding double coset representatives in $K \backslash G/B$ is exactly the same as the case $\O_n(F) \backslash \GL_n(F)/B$.

As in the previous section, we first consider the case where $n = 2$. Recall that the right action of $b \in B_2$ acting on $x \in \{g^Tg : g \in \GL_2(F)\}$ defined by $x \cdot b := b^Txb$. 

The following proposition with similar proof as the Proposition~\ref{orthogonaln=2} will be used in the proof of Theorem~\ref{sortho-orbit-thm-intro}.
\begin{proposition}
\label{sorthogonaln=2-gl2}
When $B_2$ is the standard Iwahori subgroup of $\SL_2(F)$, each $B_2$-orbit in the set of all symmetric matrices in $\GL_2(F)$ contains at least one element of the form $\begin{pmatrix} t^m u & 0 \\ 0 & t^{n} \end{pmatrix}$ or $\begin{pmatrix} t^m & 0 \\ 0 & t^{n}u \end{pmatrix}$ with $m,n \in \ZZ$ and $u \in A^*$, or $\begin{pmatrix} 0& c  \\ c & 0 \end{pmatrix}$ with $c \in F^*$.
\end{proposition}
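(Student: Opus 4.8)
The plan is to run the argument of Proposition~\ref{orthogonaln=2} essentially verbatim; the only feature of the $\GL_2(F)$ situation that changes is the size of the torus part of the Iwahori subgroup. Here $B_2$ is the intersection of the Iwahori subgroup of $\GL_2(F)$ with $\SL_2(F)$, so in the factorisation $T(A)\,U_{12,0}\,U_{21,1}$ of the $\GL_2(F)$-Iwahori (see \cite[Theorem 2.5]{IM}) only the diagonal factor shrinks: the unipotent generators $U_{12,0}=\{\,\begin{psmallmatrix}1 & p\\ 0 & 1\end{psmallmatrix}:p\in A\,\}$ and $U_{21,1}=\{\,\begin{psmallmatrix}1 & 0\\ q & 1\end{psmallmatrix}:q\in tA\,\}$ already lie in $\SL_2(F)$, hence in $B_2$ as before, whereas the diagonal matrices in $B_2$ are now exactly the $\diag(\alpha,\alpha^{-1})$ with $\alpha\in A^*$. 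In particular every row and column elimination move used in the proof of Proposition~\ref{orthogonaln=2} remains available here.

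The one genuine difference is the effect of the diagonal conjugations. For $\alpha\in A^*$,
\[
\begin{psmallmatrix}\alpha & 0\\ 0 & \alpha^{-1}\end{psmallmatrix}\begin{psmallmatrix}a & c\\ c & d\end{psmallmatrix}\begin{psmallmatrix}\alpha & 0\\ 0 & \alpha^{-1}\end{psmallmatrix}=\begin{psmallmatrix}a\alpha^{2} & c\\ c & d\alpha^{-2}\end{psmallmatrix},
\]
so such a move leaves the off-diagonal entry untouched and merely transfers unit factors between the two diagonal entries. Writing $a=t^{\ord(a)}u_a$, $d=t^{\ord(d)}u_d$, $c=t^{\ord(c)}u_c$ with $u_a,u_d,u_c\in A^*$, and using that every unit of $A$ has a square root in $A^*$ because $\K$ is quadratically closed, we may take $\alpha=u_a^{-1/2}$ to turn the $(1,1)$-entry into a bare power of $t$, or $\alpha=u_d^{1/2}$ to do the same to the $(2,2)$-entry; but we cannot do this to both diagonal entries at once, and we cannot rescale the $(1,2)$-entry at all. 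This is precisely why the representatives carry a single leftover unit $u\in A^*$ in one diagonal slot, and why in the antidiagonal case $c$ must be allowed to be an arbitrary element of $F^*$ rather than a power of $t$.

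With these two observations one replays the case analysis of Proposition~\ref{orthogonaln=2} ($c=0$; $c\neq 0$ with $a=d=0$; $a,c\neq 0$ with $d=0$; $c,d\neq 0$ with $a=0$; and $a,c,d\neq 0$). In each case one first normalises the $(1,1)$-entry (or, when $a=0$, the $(2,2)$-entry) to a power of $t$ by a $\diag(\alpha,\alpha^{-1})$ move, and then performs exactly the unipotent row and column operations of the earlier proof, carrying the single leftover unit along. Wherever that proof produced a diagonal matrix $\diag(t^{m_1},t^{m_2})$ one now obtains $\diag(t^{m_1},t^{m_2}u)$, which a final $\diag(\alpha,\alpha^{-1})$ move converts into either $\begin{psmallmatrix}t^m u & 0\\ 0 & t^n\end{psmallmatrix}$ or $\begin{psmallmatrix}t^m & 0\\ 0 & t^n u\end{psmallmatrix}$; wherever it produced $\begin{psmallmatrix}0 & t^k\\ t^k & 0\end{psmallmatrix}$ one instead lands at $\begin{psmallmatrix}0 & c\\ c & 0\end{psmallmatrix}$ with $c=t^k u\in F^*$. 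This gives the asserted list of representatives.

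The only step that is not pure bookkeeping is the subcase of $a,c,d\neq 0$ in which a diagonal entry is cleared by solving a quadratic over $F$ — the place in the proof of Proposition~\ref{orthogonaln=2} where one conjugates by $b=\begin{psmallmatrix}1 & p\\ 0 & 1\end{psmallmatrix}$ with $p$ a root of $p^{2}t^{m_1}+2pc+t^{m_2}u'=0$, where $u'\in A^*$ is the unit left on the $(2,2)$-entry after the initial normalisation. Expanding the square root of the discriminant as a binomial series in $t$, exactly as there (and valid for the same reason, namely that the perturbation term has strictly positive order), shows that the appropriate root satisfies $\ord(p)=\ord(d)-\ord(c)\geq 0$, so that $b\in B_2$; the extra unit $u'$ plays no role in this valuation estimate. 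This is the one mildly delicate point, and it is settled by the same computation as in the $\GL_2$ case; everything else is the routine transcription described above.
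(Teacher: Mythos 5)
Your argument is essentially the paper's own proof of Proposition~\ref{sorthogonaln=2-gl2}: you isolate the same key structural fact (the Iwahori of $\SL_2$ has the same unipotent generators as that of $\GL_2$ but only $\diag(\alpha,\alpha^{-1})$ on the torus, so exactly one unit on the diagonal can be normalised and the off-diagonal entry cannot be rescaled at all), and then you replay the case analysis of Proposition~\ref{orthogonaln=2} carrying a single leftover unit, including the same binomial-series valuation estimate for the root $p$ of the quadratic in the $a,c,d\neq 0$ subcase. The bookkeeping details match; no genuinely different lemma, decomposition, or idea is introduced.
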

We state the proposition in this way for convenience, but notice that the first two matrices are in the same $B_2$-orbit.
\begin{proof}
Suppose $\begin{psmallmatrix} a&c \\ c & d \end{psmallmatrix} \in \GL_2(F)$.
Write $a = t^{m_1} u_a$, $c = t^k u_c$ and $d = t^{m_2} u_d$, where $u_a$, $u_c$ and $u_d \in A^*$. 
A special case of \eqref{ortho-remove-units} is:
\begin{align}\label{sortho-remove-units}
 \begin{pmatrix} \alpha &0 \\ 0 & \alpha^{-1} \end{pmatrix}\begin{pmatrix} a&c \\ c & d \end{pmatrix} \begin{pmatrix} \alpha &0 \\ 0 & \alpha^{-1} \end{pmatrix} = \begin{pmatrix} a\alpha^2 &c \\ c & d\alpha^{-2} \end{pmatrix}. 
 \end{align}

For any symmetric matrix $\begin{psmallmatrix} a&c \\ c & d \end{psmallmatrix} \in \GL_2(F)$, we want to simplify the matrix as much as possible by a series of right $B_2$-actions. We divide our investigation into cases.
\ben
\item Suppose $c = 0$.  By putting $\alpha  = u_d^{-1/2}$ in \eqref{sortho-remove-units}, the right hand side equals $\begin{psmallmatrix} t^{m_1} u_a u_d^{-1}& 0 \\ 0 & t^{m_2} \end{psmallmatrix}$.

\item Suppose $c \neq 0$ and $a = d = 0$. In this case there is nothing to show.
\item Suppose $c \neq 0$, $d=0$, $a \neq 0$. 
 We first put $\alpha  = u_a^{-1/2}$ in the left hand side of \eqref{sortho-remove-units} to obtain $\begin{psmallmatrix}t^{m_1} & c\\ c& 0 \end{psmallmatrix}$ on the right hand side. 
\ben
\item If $k \geq m_1$, then $b^T \begin{psmallmatrix}t^{m_1} &  c \\  c & 0\end{psmallmatrix} b = \begin{psmallmatrix}  t^{m_1} & 0 \\ 0 & t^{m_1+2k}(-u_c^2) \end{psmallmatrix}$, where $b = \begin{psmallmatrix}1 & -ct^{-m_1} \\ 0 & 1\end{psmallmatrix} \in B_2$.
\item If $k < m_1$, then $b^T \begin{psmallmatrix} t^{m_1} &  c\\ c & 0\end{psmallmatrix} b = \begin{psmallmatrix}0 & c \\ c & 0 \end{psmallmatrix}$, where $b = \begin{psmallmatrix} 1 & 0\\ \frac{-1}{2c} t^{m_1} & 1\end{psmallmatrix} \in B_2$.
\een
\item Suppose $c \neq 0$, $d\neq 0$ and $a=0$. The situation is similar to case 3. Put $\alpha  = u_d^{1/2}$ in the left hand side of \eqref{sortho-remove-units} to obtain $\begin{psmallmatrix}0 & c\\ c& t^{m_2} \end{psmallmatrix}$ on the right hand side. 
\ben
\item If $k > m_2$, then $b^T \begin{psmallmatrix} 0 & c \\ c & t^{m_2}\end{psmallmatrix} b = \begin{psmallmatrix} t^{2k-m_2}(-u_c^2) & 0 \\ 0 & t^{m_2} \end{psmallmatrix}$, where $b = \begin{psmallmatrix} 1 & 0 \\ -ct^{-m_2}  & 1\end{psmallmatrix} \in B_2$. 
\item If $k \leq  m_2$, then $b^T \begin{psmallmatrix} 0 & c \\ c & t^{m_2}\end{psmallmatrix} b = \begin{psmallmatrix} 0 & c \\ c & 0 \end{psmallmatrix}$, where $b = \begin{psmallmatrix} 1 & \frac{-1}{2c} t^{m_2}\\  0& 1\end{psmallmatrix} \in B_2$.
\een

\item Suppose $a, c, d \neq 0$. We can put $\alpha = u_a^{-1/2}$ or $u_d^{1/2}$ in the left hand side of \eqref{sortho-remove-units} to obtain the matrices $\begin{psmallmatrix}t^{m_1} & c \\ c & du_a \end{psmallmatrix}$ or $\begin{psmallmatrix}au_d & c \\ c & t^{m_2} \end{psmallmatrix}$ on the right hand side respectively.
We further divide the investigation into several cases:
\ben
\item Suppose $m_1 \geq m_2$. 
\ben
\item Suppose $k \geq m_1 \geq m_2$. then $b^T \begin{psmallmatrix} t^{m_1} & c \\ c & du_a \end{psmallmatrix} b = \begin{psmallmatrix}  t^{m_1} & 0 \\ 0 & du_a - c^2/t^{m_1} \end{psmallmatrix} = \begin{psmallmatrix}  t^{m_1} & 0 \\ 0 & t^{m_3}u \end{psmallmatrix} $, where $b = \begin{psmallmatrix} 1 & -c / t^{m_1} \\ 0  & 1\end{psmallmatrix} \in B_2$, $m_3 = \ord(du_a - c^2/t^{m_1} ) \in \ZZ$ and $u \in A^*$.
\item Suppose $m_1 \geq m_2 \geq k$. Similar to the proof of case 5(a)ii in Lemma~\ref{orthogonaln=2},
\[ \begin{pmatrix} 1 & 0 \\ p & 1\end{pmatrix}\begin{pmatrix} t^{m_1} &c \\ c & du_a \end{pmatrix} \begin{pmatrix} 1 &p  \\ 0 & 1 \end{pmatrix} = \begin{pmatrix} t^{m_1} &c + pt^{m_1} \\ c + pt^{m_1} & 0 \end{pmatrix}, \]
where $p = (-2c + (4c^2 - 4t^{m_1}du_a)^{1/2})/2t^{m_1} \in A$. Write $c + pt^{m_1} = t^{m_p}u_p$.   
Using case 3(a) or 3(b) above, the $B_2$-orbit containing $\begin{psmallmatrix} a & c \\ c & d\end{psmallmatrix}$ either contains $\begin{psmallmatrix} t^{m_1}& 0 \\ 0 & t^{m_1 + 2m_p} (-u_p^2)\end{psmallmatrix}$ or $\begin{psmallmatrix} 0 & c + pt^{m_1} \\ c + pt^{m_1} & 0 \end{psmallmatrix}$, depending on $m_1 \leq m_p$ or $m_1 > m_p$ respectively.

\item Suppose $m_1 > k>  m_2$.  then $b^T \begin{psmallmatrix}au_d & c \\ c & t^{m_2} \end{psmallmatrix} b = \begin{psmallmatrix} t^{m_1} - c^2/t^{m_2} & 0 \\ 0  & t^{m_2} \end{psmallmatrix} =\begin{psmallmatrix} t^{m_4}u & 0 \\ 0  & t^{m_2} \end{psmallmatrix}$, where $b = \begin{psmallmatrix} 1 &0  \\  -c / t^{m_2}  & 1\end{psmallmatrix} \in B_2$, $m_4 = \ord (t^{m_1} - c^2/t^{m_2}) \in \ZZ$ and $u \in A^*$. 
\een

\item Suppose $m_1<m_2$. The case study is similar to case 5(a), considering the subcases $k \geq m_1 > m_2$, $m_1 > k > m_2$ and $m_1 > m_2 \geq k$. Each symmetric matrix in $\SL_2(F)$ are in the same $B_2$-orbit with matrices of the form given in the description of the proposition. We omit the details here.
\een
\een
\end{proof}
We now give the necessary definition for the combinatorial indexing set for the double cosets in $\SO_n(F) \backslash \SL_n(F) / B = K \backslash G /B$.
Recall that $\widetilde{S_n}$ is the set of affine permutations $\{ \pi \in \widetilde S_n^+ : \sum^n_{i = 1} (\pi(i) - i) = 0 \}$, which is a subset of the extended affine permutations $\widetilde S_n^+$. Also recall that $\widetilde {\mathcal{I}_n}$ is the set of extended affine twisted involutions; see Definition~\ref{def-extended-aff-twisted-inv-1}.  

Define the set of \defn{affine twisted involutions} to be the set $\widetilde {\mathcal{I}_n^0} := \widetilde {\mathcal{I}_n}  \cap \widetilde{S_n}$. 
Therefore, the affine twisted involutions are the extended affine twisted involutions $w = \bar w \tau^{(c_1, c_2, \dots, c_n)}$ satisfying $c_1 + c_2 + \dots + c_n = 0$.

In case (i) of Definition~\ref{def-aff-tw-invol} where matrices in $i\affinvol_n$ contain at least one non-zero diagonal entries, each matrix is indexed by an element in $\widetilde {\mathcal{I}_n^0}$ using the following definition. 
\begin{definition} \label{hw}
For $w \in \widetilde {\mathcal{I}_n^0}$, define $h_w$ as the symmetric monomial matrix obtained from the affine permutation matrix of $w$ by multiplying every non-diagonal entry by $i$. 
\end{definition}
Notice that $h_w \in i\affinvol_n$ for every $w \in \widetilde {\mathcal{I}_n^0}$.
For example, if $n=5$ and 
\[w = (1\,2)(3\,4) (5)  \tau^{(2,2,-1,-1, -2)} \in \widetilde {\mathcal{I}_n^0},\]
 then 
\[ h_w =  \begin{pmatrix}
0 & it^2& 0 & 0 & 0 \\
it^2 & 0 & 0 & 0 & 0 \\
0 & 0 & 0 & it^{-1} & 0 \\
0 & 0 &  it^{-1}& 0 & 0 \\
0 & 0 & 0 & 0 & t^{-2}
\end{pmatrix}.
\]
However in case (ii) of Definition~\ref{def-aff-tw-invol} where matrices in $i\affinvol_n$ contain no non-zero diagonal entries, we need extra notations to index them properly by elements in $\widetilde {\mathcal{I}_n^0}$.
\begin{definition}\label{hw-plus-minus}
Suppose $w = \bar w \tau^{\mathbf{c}} \in \widetilde {\mathcal{I}_n^0}$. When $\bar w$ has no fixed points, we define $h_w^+ = h_w$ and $h_w^- = d h_w d$ where $d = \diag(-1, 1, 1, \dots 1)$.
\end{definition}
Notice that $h_w^+, h_w^- \in i\affinvol_n$.
For example if $n = 4$ and $w = (1\,2)(3\,4) \tau^{(2,2,-2,-2)}$, then
\[ h_w^+ =  \begin{pmatrix}
0 & it^2& 0 & 0 \\
 it^2 & 0 & 0 & 0 \\
0 & 0 & 0 & it^{-2} \\
0 & 0 &  it^{-2} & 0
\end{pmatrix} \quand 
h_w^- =  \begin{pmatrix}
0 & -it^2& 0 & 0 \\
 -it^2 & 0 & 0 & 0 \\
0 & 0 & 0 & it^{-2} \\
0 & 0 &  it^{-2} & 0
\end{pmatrix}
.\] 
We now recall some relevant identities.
For odd integers $a$ and $b$, we have
\begin{equation} \label{sl2diag22blockbreak}
\begin{pmatrix} t^a& 0 \\ 0 & t^b\end{pmatrix} =  \begin{pmatrix} t^{\frac{a-1}{2}}&t^{\frac{a-1}{2}} (t-1)^{\frac{1}{2}}\\  -(t-1)^{\frac{1}{2}}t^{\frac{b-1}{2}} & t^{\frac{b-1}{2}} \end{pmatrix}\begin{pmatrix} t^{\frac{a-1}{2}}&-(t-1)^{\frac{1}{2}}t^{\frac{b-1}{2}} \\ t^{\frac{a-1}{2}} (t-1)^{\frac{1}{2}} & t^{\frac{b-1}{2}} \end{pmatrix}. 
\end{equation}
Also for any $a\in \ZZ$,
\begin{equation} \label{sl2antidiag22blockbreak1}
  \begin{pmatrix} 0& it^a \\ it^a & 0\end{pmatrix} =  \begin{pmatrix} t^a/2 & it^a/2 \\ i & 1 \end{pmatrix}  \begin{pmatrix} t^a/2 & i  \\ it^a/2 & 1 \end{pmatrix}, 
  \end{equation}
\begin{equation} \label{sl2antidiag22blockbreak2}
  \begin{pmatrix} 0& -it^a \\ -it^a & 0\end{pmatrix} = \begin{pmatrix} i & 1 \\ -t^a/2 & -it^a/2 \end{pmatrix}  \begin{pmatrix} i & -t^a/2  \\ 1 & -it^a/2 \end{pmatrix}.  
  \end{equation}
\begin{definition}\label{def-gw-sortho}
Given $w  = \bar w \tau^{(c_1, c_2, \dots, c_n)} \in \widetilde {\mathcal{I}_n^0}$, define matrices $g_w$, $g_{w}^+$, and $g_{w}^-\in \SL_n(F)$ depending on whether $\bar w$ contains fixed points or not.
\ben
\item[(1)] If $\bar w$ contains one or more fixed points, we define $g_w \in \SL_n(F)$ as follows:
\ben
\item[$\bullet$]Replace each $t^{c_j}$ in $w$ with $t^{c_j/2}$ if $c_j$ is even and $\bar w(j) = j$.
\item[$\bullet$]There are even number of indices $j$'s satisfying $\bar w(j) = j$ and $c_j$'s being odd integers. If these indices are listed as $j_1 < j_2 < \dots < j_{2\ell}$ for some $\ell \in \mathbb{Z}_{>0}$, pair them as follows: $j_1$ with $j_2$, $j_3$ with $j_4$, and so on, up to $j_{2\ell-1}$ with $j_{2\ell}$.
Replace each $2$-by-$2$ submatrix $\{j,k\} \times \{j,k\}$ in $w$, such that $j$ and $k$ are paired, with the matrix 
\[ \begin{pmatrix} t^{\frac{a-1}{2}}&-(t-1)^{\frac{1}{2}}t^{\frac{b-1}{2}} \\ t^{\frac{a-1}{2}} (t-1)^{\frac{1}{2}} & t^{\frac{b-1}{2}} \end{pmatrix}.\]
\item[$\bullet$]For each pair $c_j = c_k$ that corresponds to a cycle $(j\,k)$ in $\bar w$, replace the $2$-by-$2$ submatrices $\{j,k\} \times \{j,k\}$ in $w$ with the matrix $\begin{pmatrix} t^a/2 & i  \\ it^a/2 & 1 \end{pmatrix}$.
\een
\item[(2)] If $\bar w$ contains no fixed points (necessarily $n$ is even), we define $g_{w}^+\in \SL_n(F)$ in the same way as $g_w$ in (1), with only the third bullet point being used.
\item[(3)] Define $g_{w}^-\in \SL_n(F)$ in the same way as $g_{w}^+$, with the only difference being that for the cycle $(1\,k)$ in $\bar w$, the $2$-by-$2$ submatrix $\{1,k\} \times \{1,k\}$ in $w$ is replaced with the matrix $\begin{pmatrix} i & -t^a/2  \\ 1 & -it^a/2 \end{pmatrix}$.
\een
\end{definition}
We give two examples illustrating the above procedures. Suppose $n=5$, and let 
\[ w = (1 \, 2) \tau^{(2,2,-1,-1, -2)} \quand h_w= \begin{pmatrix}
0 & it^2& 0 & 0 & 0 \\
it^2 & 0 & 0 & 0 & 0 \\
0 & 0 & t^{-1} & 0 & 0 \\
0 & 0 & 0& t^{-1} & 0 \\
0 & 0 & 0 & 0 & t^{-2}
\end{pmatrix}.\]
 Then
\[ g_w = \begin{pmatrix}
t^2/2 & i& 0 & 0 & 0 \\
 it^2/2 & 1 & 0 & 0 & 0 \\
0 & 0 & t^{-1}&-(t-1)^{\frac{1}{2}}t^{-1} & 0 \\
0 & 0 & t^{-1} (t-1)^{\frac{1}{2}} & t^{-1} & 0 \\
0 & 0 & 0 & 0 & t^{-1}
\end{pmatrix}.\]
Notice that $g_w^Tg_w = h_w$.

Suppose $n = 4$ and let $w = (1\,2)(3\,4) \tau^{(2,2,-2,-2)}$. Then 
\[h_w^+ =  \begin{pmatrix}
0 & it^2& 0 & 0 \\
 it^2 & 0 & 0 & 0 \\
0 & 0 & 0 & it^{-2} \\
0 & 0 &  it^{-2} & 0
\end{pmatrix} , \, h_w^- =  \begin{pmatrix}
0 & -it^2& 0 & 0 \\
 -it^2 & 0 & 0 & 0 \\
0 & 0 & 0 & it^{-2} \\
0 & 0 &  it^{-2} & 0
\end{pmatrix},\] and
\[g_{w}^+ = \begin{pmatrix}
t^2/2 & i& 0 & 0 \\
 it^2/2 & 1 & 0 & 0 \\
0 & 0 & t^{-2}/2 & i \\
0 & 0 &  it^{-2}/2 & 1
\end{pmatrix}, \quad 
g_{w}^- = \begin{pmatrix}
i & -t^2/2& 0 & 0 \\
1 & -it^2/2 & 0 & 0 \\
0 & 0 & t^{-2}/2 & i \\
0 & 0 &  it^{-2}/2 & 1
\end{pmatrix}.\]
Notice that $(g_{w}^+)^T g_{w}^+ = h_w^+$ and $(g_{w}^-)^T g_{w}^- = h_w^-$.

Again the following lemma holds by identities \eqref{sl2diag22blockbreak}, \eqref{sl2antidiag22blockbreak1} and \eqref{sl2antidiag22blockbreak2}.

\begin{lemma}\label{sqrt-sortho-affineperm}
If $w \in \widetilde {\mathcal{I}_n^0}$ is not fixed-point-free then $g_w^T g_w = h_w$. If $w \in \widetilde {\mathcal{I}_n^0}$ is fixed-point-free then $(g_{w}^+)^T g_{w}^+ = h_w^+$ and $(g_{w}^-)^T g_{w}^- = h_w^-$.
\end{lemma}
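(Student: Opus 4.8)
The plan is to exploit the block structure of all the matrices involved and reduce each claimed identity to the prepared $2$-by-$2$ factorizations \eqref{sl2diag22blockbreak}, \eqref{sl2antidiag22blockbreak1} and \eqref{sl2antidiag22blockbreak2}, together with the trivial $1$-by-$1$ identity $(t^{c/2})^T(t^{c/2}) = t^c$.

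First I would record the combinatorial consequences of $w = \bar w\tau^{\mathbf c} \in \widetilde{\mathcal{I}_n^0}$ being an affine twisted involution. From $w^* = w^{-1}$ one obtains that $\bar w$ is an involution in $S_n$ — hence a disjoint union of fixed points and transpositions — and that $c_{\bar w(i)} = c_i$ for all $i$, so that $c_j = c_k$ whenever $(j\,k)$ is a transposition of $\bar w$. Consequently the affine permutation matrix of $w$, restricted to the rows and columns indexed by $\{j,k\}$ with $j < k$, equals $\begin{psmallmatrix} t^{c_j} & 0 \\ 0 & t^{c_k}\end{psmallmatrix}$ when $j,k$ are both fixed points of $\bar w$, and $\begin{psmallmatrix} 0 & t^{c_j} \\ t^{c_j} & 0\end{psmallmatrix}$ when $(j\,k)$ is a transposition. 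Passing to $h_w$ multiplies the off-diagonal entries by $i$, and passing from $h_w = h_w^+$ to $h_w^- = d h_w d$, with $d = \diag(-1,1,\dots,1)$, negates only the two off-diagonal entries lying in the first row and the first column; equivalently, it negates the off-diagonal entries of the single $2$-by-$2$ block attached to the transposition of $\bar w$ containing $1$ and leaves every other block unchanged.

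Next I would observe that, relative to the partition of $[1,n]$ into the singletons $\{j\}$ for the fixed points $j$ of $\bar w$ with $c_j$ even, the pairs $\{j,k\}$ of odd-$c$ fixed points selected in Definition~\ref{def-gw-sortho}, and the supports $\{j,k\}$ of the transpositions of $\bar w$, every one of $g_w$, $g_w^\pm$, $h_w$ and $h_w^\pm$ is block diagonal. Since both the transpose of a matrix and the product of two block-diagonal matrices respect this partition, it suffices to check the identity one block at a time. On a singleton block it is the trivial $1$-by-$1$ identity above. On a pair $\{j,k\}$ of odd-$c$ fixed points it is exactly \eqref{sl2diag22blockbreak} with $(a,b) = (c_j, c_k)$, once one notes that the $2$-by-$2$ matrix inserted by Definition~\ref{def-gw-sortho} is the right-hand factor there while its transpose is the left-hand factor. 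On the support of a transposition $(j\,k)$ of $\bar w$, writing $a := c_j = c_k$, it is \eqref{sl2antidiag22blockbreak1} — which covers the relevant blocks of $g_w$ and of $g_w^+$ — except for the transposition $(1\,k)$ containing $1$ in the fixed-point-free case, where the block inserted into $g_w^-$ is the right-hand factor of \eqref{sl2antidiag22blockbreak2} and its transpose-times-itself equals $\begin{psmallmatrix} 0 & -it^a \\ -it^a & 0\end{psmallmatrix}$, which is precisely the $\{1,k\}$-block of $h_w^- = d h_w d$. Reassembling the blocks yields $g_w^T g_w = h_w$ when $w$ is not fixed-point-free, and $(g_w^+)^T g_w^+ = h_w^+$, $(g_w^-)^T g_w^- = h_w^-$ when $w$ is fixed-point-free. (The same block decomposition also shows $\det g_w = \det g_w^\pm = t^{\frac12\sum_i c_i} = 1$, reconciling Definition~\ref{def-gw-sortho} with the requirement $g_w \in \SL_n(F)$.)

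There is no real difficulty here: all the substance is contained in the three prepared $2$-by-$2$ factorizations, and everything else is bookkeeping. The one point to watch — and the only obstacle I anticipate — is keeping the index conventions consistent: one must line up the row/column order of each $2$-by-$2$ block of $w$ (and of $h_w$, $h_w^\pm$) with the order in which the replacement matrix of Definition~\ref{def-gw-sortho} is written, and confirm that conjugation by $d$ in the definition of $h_w^-$ touches only the block containing the index $1$, which holds because $1$ lies in exactly one transposition of $\bar w$.
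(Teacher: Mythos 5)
Your proof is correct and follows the same route the paper intends: the paper simply cites the three $2\times 2$ identities \eqref{sl2diag22blockbreak}, \eqref{sl2antidiag22blockbreak1}, \eqref{sl2antidiag22blockbreak2} as the content of the lemma, and your argument supplies exactly the bookkeeping they leave implicit — namely, that $w$ being an affine twisted involution forces $\bar w$ to be an involution with $c_{\bar w(i)} = c_i$, that all matrices in play are block diagonal with respect to the resulting partition of $[1,n]$, and that each block identity (including the sign flip in the $\{1,k\}$ block for $h_w^-$ under conjugation by $d$) is one of the prepared factorizations.
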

The following lemma explains why $Kg_{w}^+B$ and $Kg_{w}^-B$ are distinct double cosets if $\bar w$ is fixed-point-free. In this lemma we denote $B(\GL_n(F))$ and $B(\SL_n(F))$ as the Iwahori subgroups of $\GL_n(F)$ and $\SL_n(F)$ respectively. Recall that these consist of the matrices that are upper-triangular modulo $t$ in $\GL_n(A)$ and $\SL_n(A)$ respectively.

\begin{lemma} \label{fpf-aff-invol-two-orbits}
Suppose $b^Th_w^+b = h_w^+$, where $b \in B(\GL_n(F))$ and $w \in \widetilde {\mathcal{I}_n^0}$ are such that $h_w^+$ has no non-zero entries on the diagonal. Then $\det(b) = 1$, i.e., $b \in B(\SL_n(F))$.

\begin{proof}
Write $w = \bar w \tau^{(c_1, c_2, \dots, c_n)}$ as before, where $\bar w \in S_n$.
Since $\bar w$ is fixed-point-free in $S_n$, we can write $\bar w$ as $(a_1,b_1)(a_2,b_2)\dots (a_{n/2},b_{n/2})$, where $\{a_1, a_2, \dots ,a_{n/2}, b_1, b_2, \dots, b_{n/2}\} = [1,n]$, and $a_j < b_j$ for $j \in [1,n/2]$. Denote the diagonal entries of $b$ as $d_{11}, d_{22}, \dots, d_{nn} \in A^*$, and their respective non-zero constant terms as $c_{11}, c_{22}, \dots, c_{nn} \in \K^*$.

Notice that the equation $b^Th_w^+b = h_w^+$ can be rewritten as $((h_w^+)^{-1}b^Th_w^+)b = 1$. Since $h_w^+$ is a symmetric monomial matrix, left and right multiplying $(h_w^+)^{-1}$ and $h_w^+$ to a matrix permutes rows and columns of that matrix respectively, with multiplication of integral powers of $t$ to rows and columns. Since $w^{-1} = w^*$, for $j \in [1,n/2]$, left multiplying $(h_w^+)^{-1}$ to $b^T$ moves the entry $d_{a_ja_j}$ and $d_{b_jb_j}$ to the $(b_j, a_j)$ and $(a_j, b_j)$ entries respectively, with the multiplication of the same integral power of $t$ times $-i$. 

Next, right multiplying $h_w^+$ to $(h_w^+)^{-1}b^T$ moves the entry $d_{a_ja_j}$ and $d_{b_jb_j}$ to the $(b_j, b_j)$ and $(a_j, a_j)$ entries respectively, canceling the integer power of $t$ and the $-i$ multiples in the $(b_j, a_j)$ and $(a_j, b_j)$ entries of $(h_w^+)^{-1}b^T$. Since $b$ and $(h_w^+)^{-1}b^Th_w^+$ are both upper triangular modulo $t$, the constant term of the $(a_j,a_j)$ and $(b_j,b_j)$ entries in $(w^{-1}b^Tw)b$ is $c_{a_ja_j}c_{b_jb_j}$. Since $((h_w^+)^{-1}b^Th_w^+)b = 1$, $c_{a_ja_j}c_{b_jb_j} = 1$ for $j \in [1,n/2]$.

Since $\det(h_w^+) = \det(b^Th_w^+b) = \det(b)^2\det(h_w^+)$, it holds that $\det(b) = 1$ or $-1$. Again, since $b \in B(\GL_n(F))$, $\det(b)$ has a constant term equal to the product of the constant terms of the diagonal entries of $b$. Since we have concluded in the previous paragraph that $c_{a_ja_j}c_{b_jb_j} = 1$ for $j \in [1,n/2]$, $\det(b)$ has a constant term equal to $1$, and hence $\det(b) = 1$.
\end{proof}
\end{lemma}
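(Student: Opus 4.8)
The plan is to show that the value $\det b$, which the hypothesis forces to lie in $\{1,-1\}$, in fact equals $1$, by reading off the constant terms of the diagonal entries of $b$ from the relation $b^T h_w^+ b = h_w^+$. Taking determinants in that relation gives $(\det b)^2\det(h_w^+)=\det(h_w^+)$, and since $h_w^+$ is invertible and $A=\KPt$ is an integral domain of characteristic $\neq 2$, we get $\det b\in\{1,-1\}$; in particular $\det b$ is a constant. On the other hand, because $b$ is upper triangular modulo $t$, every permutation other than the identity contributes to the Leibniz expansion of $\det b$ a term lying in $tA$, so the constant term of $\det b$ equals $\prod_{i=1}^n c_{ii}$, where $c_{ii}\in\K^*$ is the constant term of $b_{ii}$. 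Thus it suffices to prove $\prod_{i=1}^n c_{ii}=1$.

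To do this I would rewrite the hypothesis as $\bigl((h_w^+)^{-1}b^T h_w^+\bigr)\,b=1_n$ and set $M:=(h_w^+)^{-1}b^T h_w^+$, so that $M=b^{-1}$. Since $h_w^+$ has no nonzero diagonal entry, $\bar w$ is fixed-point-free, and since $w\in\widetilde {\mathcal{I}_n^0}$ it is also an involution; write $\bar w=(a_1,b_1)\cdots(a_{n/2},b_{n/2})$. The matrix $h_w^+$ has nonzero entry $it^{c_j}$ in position $(\bar w(j),j)$, so conjugation by it is an explicit row/column permutation with scalar twists, and a direct computation gives $M_{k\ell}=t^{c_\ell-c_k}\,b_{\bar w(\ell),\bar w(k)}$; on the diagonal the $t$-exponent is $c_k-c_k=0$ and the scalar $i$ cancels, so $M_{kk}=b_{\bar w(k),\bar w(k)}$. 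Now $b$ and $M=b^{-1}$ both lie in the Iwahori subgroup, hence are upper triangular modulo $t$; therefore in $(Mb)_{kk}=\sum_j M_{kj}b_{jk}$ every summand with $j\neq k$ has vanishing constant term, and comparing constant terms with $(1_n)_{kk}=1$ yields $c_{\bar w(k),\bar w(k)}\,c_{kk}=1$ for every $k$.

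Finally, evaluating this identity at $k=a_j$ gives $c_{a_j a_j}c_{b_j b_j}=1$ for each $j\in[1,n/2]$, and multiplying these $n/2$ identities (whose index pairs partition $[1,n]$) yields $\prod_{i=1}^n c_{ii}=1$, which combined with the first paragraph gives $\det b=1$. The one step that needs genuine care is the conjugation computation for $M$ — in particular checking that the row/column permutation induced by $h_w^+$ sends $b_{\bar w(k),\bar w(k)}$ to the $(k,k)$ position with trivial $t$- and scalar-twist — together with the essential use of the \emph{involution} property of $\bar w$: a hypothetical longer cycle in $\bar w$ would only let one conclude $\prod_i c_{ii}=\pm1$, which is not enough. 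Everything else is the routine interplay between membership in the Iwahori subgroup and the "upper triangular modulo $t$" condition.
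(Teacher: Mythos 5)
Your proposal is correct and follows essentially the same route as the paper's proof: rewrite the hypothesis as $\bigl((h_w^+)^{-1}b^T h_w^+\bigr)b = 1_n$, use the monomial structure of $h_w^+$ to see that the $(k,k)$ entry of $M := (h_w^+)^{-1}b^T h_w^+$ is (up to trivial $t$-twist and $i$-cancellation) the diagonal entry $b_{\bar w(k),\bar w(k)}$, exploit that both $b$ and $M = b^{-1}$ lie in the Iwahori subgroup so only the diagonal terms survive when one takes constant terms of $(Mb)_{kk}$, and combine the resulting relations $c_{a_j a_j}c_{b_j b_j}=1$ with the determinant constraint $\det b = \pm 1$. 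Your explicit closed formula $M_{k\ell}=t^{c_\ell-c_k}\,b_{\bar w(\ell),\bar w(k)}$ is a cleaner way to package the same row/column-permutation computation that the paper describes verbally, and your closing remark about why the involution property of $\bar w$ is genuinely needed is a nice observation that the paper leaves implicit.
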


The following is the proof of Theorem~\ref{sortho-orbit-thm-intro}, concerning the double coset representatives in $\SO_n(F) \backslash \SL_n(F) / B$. Recall that $G = \SL_n(F)$ and $K = \SO_n(F)$. 

A equivalent statement of Theorem~\ref{sortho-orbit-thm-intro} is the following decomposition:
\[ G = \bigsqcup_{\substack{w \in \widetilde {\mathcal{I}_n^0}\\ \bar w \text{ not fpf}}} Kg_w B \sqcup \bigsqcup_{\substack{w \in \widetilde {\mathcal{I}_n^0}\\ \bar w \text{ fpf}}} (Kg_{w}^+B \sqcup Kg_{w}^-B),\]
where ``fpf" is the abbreviation of ``fixed-point-free".
In other words, a set of distinct double coset representatives of $K\backslash G/B$ can be constructed in the following way: for each $w  = \bar w \tau^{(c_1, c_2, \dots, c_n)} \in \widetilde {\mathcal{I}_n^0}$,
include $g_w$ if $\bar w$ contains a fixed point, and otherwise include both $g_{w}^+$ and $g_{w}^-$.

\begin{proof}[Proof of Theorem~\ref{sortho-orbit-thm-intro}]
As in the proof of Theorem~\ref{ortho-orbit-thm-intro}, there is a bijection between $K\backslash G$ and $\{g^Tg: g \in \SL_n(F)\}$. Under this bijection, the orbits for the right $B$-action $b: h \mapsto b^T hb$ on the set $\{g^Tg: g \in \SL_n(F)\}$ correspond to right $B$-orbits on $K\backslash G$. By a suitable series of right $B$-actions, we can transform any matrix of the form $g^Tg$ with a symmetric monomial matrix of determinant $1$. The inductive process to accomplish this is the same as in the proof of Theorem~\ref{ortho-orbit-thm-intro}, 
with the difference that we perform calculations for the $2$-by-$2$ blocks using the tools provided by Proposition~\ref{sorthogonaln=2-gl2}.

Now we know that each $B$-orbit of $\{g^Tg: g \in \SL_n(F)\}$ contains a symmetric monomial matrix with the determinant equal to $1$, denoted by $h$. Through another series of right $B$-actions with diagonal matrices, we can reduce each non-zero entry on the diagonal to an integral power of $t$ and each non-diagonal non-zero entry into $i$ or $-i$ times an integral power of $t$. This process can be outlined as follows: suppose the submatrix $\begin{psmallmatrix} 0 & t^m c \\ t^m c & 0\end{psmallmatrix}$, where $m \in \ZZ$ and $1 \neq c \in A^*$, appears in the $j$-th and $k$-th rows and columns of $h$. In this case, proceed with one of the following steps:
\ben
\item Find a non-zero diagonal entry, located in row and column $\ell \neq j, k$, which has not yet been reduced to an integral power of $t$. After the right $B$-action of a diagonal matrix in $\SL_n(F)$ that has $ic^{-1}$ in the $(j, j)$ entry, $-ic$ in the $(\ell, \ell)$ entry, and $1$ in all other diagonal entries, the $(j,k)$ and $(k,j)$ entries of the resulting matrix are equal to $it^m$. This reduction process takes the following form within the $j$-th, $k$-th, and $\ell$-th rows and columns:
\begin{equation} \label{so-adjust-entry1}
 \begin{pmatrix} ic^{-1} & 0 & 0 \\
0 & 1 & 0 \\
0 & 0 & -ic \end{pmatrix}
\begin{pmatrix} 0 & t^m c & 0 \\
t^m c & 0 & 0 \\
0 & 0 & d \end{pmatrix}
\begin{pmatrix} ic^{-1} & 0 & 0 \\
0 & 1 & 0 \\
0 & 0 & -ic \end{pmatrix} = 
\begin{pmatrix} 0 & i t^m  & 0 \\
i t^m & 0 & 0 \\
0 & 0 & dc^2 \end{pmatrix},
\end{equation}
where $d \in F$.
\item Find another pair of non-zero non-diagonal entries, such as the $(\ell_1, \ell_2)$ and $(\ell_2, \ell_1)$ entries, which have not yet been reduced to an integral power of $t$. After the right $B$-action of a diagonal matrix in $\SL_n(F)$ with $ic^{-1}$ in the $(j, j)$ entry, $-ic$ in the $(\ell_1, \ell_1)$ entry, and $1$ in all other diagonal entries, the $(j,k)$ and $(k,j)$ entries of the resulting matrix are equal to $it^m$. This reduction process takes the following form within the $j$-th, $k$-th, $\ell_1$-th, and $\ell_2$-th rows and columns:
\begin{equation}\label{so-adjust-entry2}
 \begin{pmatrix} ic^{-1} & 0 & 0 & 0  \\
0 & 1 & 0 & 0 \\
0 & 0 & -ic & 0\\
0 & 0 & 0 & 1 \end{pmatrix}
\begin{pmatrix} 0 & t^m c & 0 & 0  \\
t^m c & 0 & 0 & 0 \\
0 & 0 & 0 & d\\
0 & 0 & d & 0 \end{pmatrix}
\begin{pmatrix} ic^{-1} & 0 & 0 & 0  \\
0 & 1 & 0 & 0 \\
0 & 0 & -ic & 0\\
0 & 0 & 0 & 1 \end{pmatrix} = 
\begin{pmatrix} 0 & i t^m  & 0 & 0 \\
i t^m & 0 & 0  & 0\\
0 & 0 & 0 & -icd \\
0 & 0 & -icd & 0\end{pmatrix}, 
\end{equation}
where $d \in F$.
\een
Suppose an entry $t^m c$, where $m \in \ZZ$ and $1 \neq c \in A^*$, appears in the $(j, j)$ entry of $h$. In this case, locate another non-zero diagonal entry, situated in row and column $\ell \neq j$, which has not yet been reduced to an integral power of $t$ (if it exists). After the right $B$-action of a diagonal matrix in $\SL_n(F)$ with $c^{-1/2}$ in the $(j, j)$ entry, $c^{1/2}$ in the $(\ell, \ell)$ entry, and 1 in all other diagonal entries, the $(j,j)$ entry of the resulting matrix is equal to $t^m$. This reduction process is applied to the $j$-th and $\ell$-th rows and columns in the following manner:
\begin{equation}\label{so-adjust-entry3}
\begin{pmatrix} c^{-1/2} & 0 \\
 0 & c^{1/2} \end{pmatrix}
\begin{pmatrix} t^m c & 0\\
 0 & d \end{pmatrix}
\begin{pmatrix} c^{-1/2} & 0 \\
 0 & c^{1/2} \end{pmatrix} = 
\begin{pmatrix} t^m  & 0\\
 0 & dc \end{pmatrix}, \text{ where } d \in F.
 \end{equation}

Suppose $h$ contains at least one non-zero diagonal entry. Begin by reducing the non-diagonal non-zero entries using \eqref{so-adjust-entry1} and \eqref{so-adjust-entry2}, followed by reductions on the non-zero diagonal entries using \eqref{so-adjust-entry3}. The determinant of the resulting matrix equals $1$, and each $2$-by-$2$ block of the form $\begin{psmallmatrix} 0 & it^m \ it^m & 0\end{psmallmatrix}$ has a determinant of $t^{2m}$. Consequently, the remaining entry on the diagonal must be some integral power of $t$, and the resulting matrix must align with the form $h_w$ as defined in Definition~\ref{hw}.

Alternatively, if $h$ contains no non-zero diagonal entry, reduce the non-diagonal non-zero entries not located in the first row and column using \eqref{so-adjust-entry1} and \eqref{so-adjust-entry2}. In the resulting matrix, suppose the $(1, j)$ and $(j, 1)$ entries are non-zero for some $j$ and equal to $t^m c$ for some $c \in A^*$. Considering that the determinant remains $1$ and each $2$-by-$2$ block of the form $\begin{psmallmatrix} 0 & it^m \\ it^m & 0\end{psmallmatrix}$ yields a determinant of $t^{2m}$, it follows that $-c^2 = 1$, leading to $c = i$ or $-i$. Consequently, the resulting matrix is either in the form $h_w^+$ or $h_w^-$ as outlined in Definition~\ref{hw-plus-minus}.

As per Lemma~\ref{sqrt-sortho-affineperm}, each double coset in $K \backslash G/B$ includes at least one of the matrices $g_w, g_w^+$, or $g_w^-$. Through arguments similar to the ones presented in the proof of Theorem~\ref{ortho-orbit-thm-intro}, it becomes evident that $g_w$ is not in the same double coset as $g_{v}^+$ or $g_{v}^-$ when $v \neq w$.

The only fact that remains to be checked is that for $w \in \widetilde {\mathcal{I}_n^0}$ such that $\bar w$ is fixed-point-free in $S_n$, the $g_{w}^+$ and $g_{w}^-$ do not lie in the same double coset. 

Suppose there exists $b \in B \subset \SL_n(A)$ such that 
\[b^T (g_{w}^+)^T g_{w}^+ b = b^T h_w^+ b = h_w^- = (g_{w}^-)^T g_{w}^-.\]
Notice that
\[ h_w^-= dh_w^+d,\]
where $d = \diag(-1, 1,1, \dots, 1)$. It follows that \[(bd)^T h_w^+ bd = h_w^+.\]
 By Lemma~\ref{fpf-aff-invol-two-orbits}, $\det(bd) = 1$. However, $\det(d) = -1$. Therefore, $\det(b) = -1$, contradicting the assumption that $b \in B \subset \SL_n(A)$.

Therefore, we conclude that $g_{w}^+$ and $g_{w}^-$ do not lie in the same double coset.
\end{proof}

\section{Orbits of the symplectic group}\label{sect4-ref}
In this section, we focus on the orbits of the symplectic group over the affine flag variety.
 
Define 
\[ J = \begin{pmatrix} 0 &1_n \\ -1_n & 0 \end{pmatrix}.\] 
Here, $1_n$ refers to the $n$-by-$n$ identity matrix. Note that $J^T = J^{-1} = -J$. Notice also we are considering $(2n)$-by-$(2n)$ matrices now since symplectic groups are only defined in even dimensions.

Now we consider the case where $G = \GL_{2n}(F)$ with 
\[K = \Sp_{2n}(F) = \{g \in \GL_{2n}(F) : g^TJg = J\}.\]
The following lemma is well-known.
\begin{lemma}
The set  $\{g^TJg : g \in G\}$ is equal to the set $\{ h \in G: h = -h^T\}$.
\end{lemma}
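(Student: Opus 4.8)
The plan is to prove the two inclusions separately, the first being routine and the second amounting to the classical fact that all nondegenerate alternating bilinear forms over a field are equivalent. For the inclusion $\{g^TJg : g \in G\} \subseteq \{h \in G : h = -h^T\}$, I would simply note that if $h = g^TJg$ with $g \in G$, then $h^T = g^TJ^Tg = -g^TJg = -h$ since $J^T = -J$, and $h$ is invertible as a product of invertible matrices. For the reverse inclusion, let $h \in \GL_{2n}(F)$ with $h^T = -h$, and regard $h$ as the Gram matrix of the bilinear form $\langle x, y\rangle := x^Thy$ on $F^{2n}$. This form is nondegenerate because $h$ is invertible, and it is alternating: for any $x \in F^{2n}$ the scalar $x^Thx$ equals its transpose $x^Th^Tx = -x^Thx$, so $x^Thx = 0$ because $\operatorname{char}\K \neq 2$.

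The main step is then to build a symplectic basis by the usual inductive Gram--Schmidt procedure for alternating forms, which works verbatim over any field and in particular over $F = \KLt$. First I would pick any nonzero $u_1 \in F^{2n}$; nondegeneracy makes the functional $x \mapsto u_1^Thx$ nonzero, so there is $w$ with $u_1^Thw \neq 0$, and I set $v_1 := (u_1^Thw)^{-1}w$, so that $u_1^Thv_1 = 1$ and $v_1^Thu_1 = -1$. Since $u_1^Thu_1 = 0 \neq 1$, the vectors $u_1, v_1$ are independent, and letting $W := \{x \in F^{2n} : u_1^Thx = v_1^Thx = 0\}$ we get $F^{2n} = Fu_1 \oplus Fv_1 \oplus W$ with $\dim W = 2n-2$ and the restriction of the form to $W$ again nondegenerate and alternating. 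By induction on $n$, $W$ has a basis $u_2, v_2, \dots, u_n, v_n$ in which the form has Gram matrix $\bigoplus_{j=2}^n \left(\begin{smallmatrix} 0 & 1 \\ -1 & 0\end{smallmatrix}\right)$. Ordering the full basis as $(u_1, \dots, u_n, v_1, \dots, v_n)$ produces a matrix $P \in \GL_{2n}(F)$ whose columns are these vectors and which satisfies $P^ThP = J$. Hence $h = (P^{-1})^TJ(P^{-1})$, and taking $g := P^{-1} \in G$ gives $h = g^TJg$, as desired.

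I do not expect a serious obstacle here. The only point worth flagging is the contrast with the orthogonal case: there the image $\{g^Tg : g \in G\}$ is a \emph{proper} subset of the invertible symmetric matrices, cut out by the square-determinant condition of Lemma~\ref{gtransposeg_n}, whereas every nondegenerate alternating form over a field is equivalent to the standard one, so no arithmetic condition on $F = \KLt$ intervenes and the image is \emph{all} of $\{h \in G : h = -h^T\}$. As an alternative to the coordinate-free argument, one could give a hands-on proof parallel to Lemma~\ref{charsym1}: row- and column-reduce $h$ by simultaneous elementary operations $h \mapsto u^Thu$ to bring it to a block-antidiagonal form $\bigoplus \left(\begin{smallmatrix} 0 & 1 \\ -1 & 0\end{smallmatrix}\right)$ and then permute the blocks, the only thing to check being that each pivot entry can be rescaled to $1$, which holds because $F$ is a field; I would present whichever version fits the surrounding exposition better.
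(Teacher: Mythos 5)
The paper states this lemma without proof, labeling it ``well-known,'' so there is no argument in the text to compare against. Your proof is correct and is the standard one: the forward inclusion is the one-line computation with $J^T = -J$, and the reverse inclusion is the symplectic Gram--Schmidt construction, which builds a basis $(u_1,\dots,u_n,v_1,\dots,v_n)$ with Gram matrix $J$ purely from nondegeneracy and the alternating property, using only that $F$ is a field. The details all check out: $u_1^T h u_1 = 0$ gives independence of $u_1, v_1$; the decomposition $F^{2n} = Fu_1 \oplus Fv_1 \oplus W$ with $W$ the two-functional kernel is exactly the hyperbolic-plane splitting; and the induction bottoms out at $n=0$. The observation you flag at the end is worth retaining in spirit: unlike Lemma~\ref{gtransposeg_n}, no square-determinant or square-root condition on $F = \KLt$ enters, because rescaling a single pivot to $1$ needs only inverses in $F$, not square roots; this is precisely why the paper can cite the symplectic statement as folklore while the orthogonal analogue required Lemmas~\ref{charsym1} and~\ref{charsym2}. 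Your alternative ``hands-on'' row/column-reduction sketch would also work and would more closely parallel the style of Lemma~\ref{charsym1}, but either version is a legitimate filling-in of the omitted proof.
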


We now give the necessary definition for the combinatorial indexing set for the double cosets in $K\backslash G/ B$. Recall from Definition~\ref{def-fpf-ex-aff-tw-invol} that
the set $\skewsym_{2n}$ consists of all skew-symmetric $2n$-by-$2n$ monomial matrices whose non-zero entries above the diagonal are integral powers of $t$.
\begin{remark}
Define the set of \defn{fixed-point-free extended affine twisted involutions} $\IIfpf$ to be the subset of $\widetilde{S}_{2n}^+$ consisting of symmetric affine permutation matrices with no non-zero diagonal entries.
Every matrix $h \in \skewsym_{2n}$ can be indexed by $w \in \IIfpf$, as stated in the following definition.
\end{remark}

\begin{definition}
For any $w \in \IIfpf$, define the skew-symmetric matrix $h^{\mathsf{sk}}_w= -(h^{\mathsf{sk}}_w)^T$ by assigning a negative sign to all the non-zero entries in the lower triangular part of $w$. 
\end{definition}
It holds that $h^{\mathsf{sk}}_w \in \skewsym_{2n}$ for any $w \in \IIfpf$. As an example suppose $w = (1\,2)(3\,4) \tau^{(2,2,-2,-2)} \in \IIfpf$. Then 
\[h^{\mathsf{sk}}_w =  \begin{pmatrix}
0 & t^2& 0 & 0 \\
 -t^2 & 0 & 0 & 0 \\
0 & 0 & 0 & t^{-2} \\
0 & 0 &  -t^{-2} & 0
\end{pmatrix}.
\]
The following definition concerns the double coset representatives in $ K \backslash G/B$. Notice that $J = h^{\mathsf{sk}}_{w_J} \in \skewsym_{2n}$, where $w_J = (1\,\, n+1) (2 \,\, n+2) \dots (n \,\, 2n) \in S_{2n}$. Also recall that any fixed-point-free involution $w$ in $S_{2n}$ has the same cycle-type as $w_J$, so $w$ must be in the same conjugacy class as $w_J$. We specify a particular $\sigma_w \in S_{2n}$ such that $\sigma_w^{-1} w_J \sigma_w = w$ as follows:
\ben
\item Write $w$ as a product of $n$ disjoint $2$-cycles. Set $\sigma_0 = \text{id} \in S_{2n}$, and $w_0 = w$.
\item Inductively for $j \in [1,n]$, if $w_{j-1}(j) = j+n$ then set $\sigma_{j} = \sigma_{j-1}$ and $w_j = w_{j-1}$, and otherwise set $\sigma_{j} = \sigma_{j-1} (j+n \, \, \, w_{j-1}(j))$, and 
$w_j = (j+n \, \, \, w_{j-1}(j)) w_{j-1} (j+n \, \,\, w_{j-1}(j)).$
\item Finally define $\sigma_w$  to be the permutation $\sigma_n$. 
\een
Because $\sigma_w^{-1} w_J \sigma_w = w$, 
it follows that the skew-symmetric matrix $\sigma_w^{T} J \sigma_w$ has the same non-zero positions as the (symmetric) permutation matrix of $w$.

For the following definition let $w = \bar w \tau^{\mathbf{c}} \in \IIfpf$, and write $\bar w = (i_1\,j_1)(i_2\,j_2) \dots (i_n\,j_n)$ as a product of $n$ disjoint $2$-cycles with $i_1 < i_2 < \dots < i_n$ and $i_k < j_k$ for each $k \in [1,n]$.
\begin{definition}\label{def-gw-sp}
Define $\sigma_w \in S_{2n}$ as above.
Form $\sqrt{\mathbf{c}} \in \ZZ^n$ from $\mathbf{c}$ by changing the entries $c_{j_k}$  to $0$ for all $k \in [1,n]$. Let $s \in S_{2n}$ be the product of the cycles $(i_k \, j_k)$ for which the $(i_k, j_k)$-entry of $\sigma_w^T J \sigma_w$ is equal to $-1$. Finally define $g_w \in \GL_{2n}(F)$ to be the affine permutation matrix of  $\sigma_w \tau^{\sqrt{\mathbf{c}}}s$.
\end{definition}
As an example, suppose $w = (1\,2)(3\,4) (5\,6) \tau^{(1,1,-3,-3,2, 2)}$. Notice that $(3\,5)(2\, 4) w_J (2\,4)(3\, 5) = (1\,2)(3\,4) (5\,6) = \bar w$. Therefore $\sigma_w = (2\,4)(3\, 5)$. Now $\sqrt{(1,1,-3,2,-3, 2)} = (1,0,-3,2,0,0)$, and 
\[ \sigma_w^T J \sigma_w = \begin{pmatrix} 
0& 1 & 0 & 0 & 0 & 0\\
-1 & 0 & 0 & 0 & 0 & 0\\
0 & 0 & 0&-1 & 0 & 0\\
0 & 0 & 1& 0 & 0 & 0\\
0 & 0 & 0 & 0 & 0 & 1\\
0 & 0 & 0 & 0 & -1 & 0
\end{pmatrix}.\]
Therefore $s = (3\,4)$ and hence 
\begin{align*}
 g_w &= \begin{pmatrix} 
1& 0 & 0 & 0 & 0 & 0\\
0 & 0 & 0 & 1& 0 & 0\\
0 & 0 & 0&0 & 1 & 0\\
0 & 1& 0& 0 & 0 & 0\\
0 & 0 & 1 & 0 & 0 &0 \\
0 & 0 & 0 & 0 & 0 & 1
\end{pmatrix}
 \begin{pmatrix} 
t& 0 & 0 & 0 & 0 & 0\\
0 & 1 & 0 & 0& 0 & 0\\
0 & 0 & t^{-3}&0 & 0 & 0\\
0 & 0& 0& 1 & 0 & 0\\
0 & 0 & 0 & 0 & t^2 &0 \\
0 & 0 & 0 & 0 & 0 & 1
\end{pmatrix}
 \begin{pmatrix} 
1& 0 & 0 & 0 & 0 & 0\\
0 & 1 & 0 & 0& 0 & 0\\
0 & 0 & 0&1 & 0 & 0\\
0 & 0& 1& 0 & 0 & 0\\
0 & 0 & 0 & 0 & 1 &0 \\
0 & 0 & 0 & 0 & 0 & 1
\end{pmatrix}\\
&= \begin{pmatrix} 
t& 0 & 0 & 0 & 0 & 0\\
0 & 0 & 1 & 0& 0 & 0\\
0 & 0 & 0&0 & t^2 & 0\\
0 & 1& 0& 0 & 0 & 0\\
0 & 0 & 0 & t^{-3} & 0 &0 \\
0 & 0 & 0 & 0 & 0 & 1
\end{pmatrix}.
\end{align*}
Notice that $g_w^T J g_w = \begin{pmatrix} 
0& t & 0 & 0 & 0 & 0\\
-t & 0 & 0 & 0 & 0 & 0\\
0 & 0 & 0&t^{-3} & 0 & 0\\
0 & 0 & -t^{-3}& 0 & 0 & 0\\
0 & 0 & 0 & 0 & 0 & t^2\\
0 & 0 & 0 & 0 & -t^2 & 0
\end{pmatrix} =  h^{\mathsf{sk}}_{w}$.

A result similar to Lemma~\ref{sqrt-ortho-affineperm} and \ref{sqrt-sortho-affineperm} holds for this $g_w$.
\begin{lemma}\label{sqrt-sp-affineperm}
It holds that $g_w^TJg_w =  h^{\mathsf{sk}}_w$ for all $w \in  \IIfpf$.
\end{lemma}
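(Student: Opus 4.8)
The plan is to compute $g_w^T J g_w$ directly from the factorization of affine permutation matrices $g_w = \sigma_w\,\tau^{\sqrt{\mathbf c}}\,s$ furnished by Definition~\ref{def-gw-sp}. Write $D := \tau^{\sqrt{\mathbf c}}$ for the associated diagonal matrix. Since the identification of affine permutations with affine permutation matrices is multiplicative, and since $\sigma_w$ and $s$ are honest permutation matrices with $\sigma_w^T=\sigma_w^{-1}$, $s^T=s^{-1}=s$, while $D^T=D$, one gets
\[ g_w^T J g_w \;=\; s\,D\,(\sigma_w^T J \sigma_w)\,D\,s. \]
Set $M := \sigma_w^T J \sigma_w = \sigma_w^{-1} J \sigma_w$. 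Because $J^T=-J$, the matrix $M$ is skew-symmetric, and because $\sigma_w^{-1} w_J \sigma_w = \bar w = (i_1\,j_1)\cdots(i_n\,j_n)$ while $J$ is the signed permutation matrix of $w_J$, every entry of $M$ vanishes except $M_{i_k j_k}$ and $M_{j_k i_k}=-M_{i_k j_k}$, each lying in $\{1,-1\}$; this is the observation recorded just before Definition~\ref{def-gw-sp}.

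Next I would feed in two elementary inputs. First, since $w\in\IIfpf$ its affine permutation matrix is symmetric, so the $(i_k,j_k)$ and $(j_k,i_k)$ entries $t^{c_{j_k}}$ and $t^{c_{i_k}}$ agree, i.e. $c_{i_k}=c_{j_k}$ for every $k$; as $\sqrt{\mathbf c}$ is obtained from $\mathbf c$ by zeroing the coordinates indexed by the $j_k$'s (and the indices $i_1,\dots,i_n,j_1,\dots,j_n$ are pairwise distinct), this gives $(\sqrt{\mathbf c})_{i_k}=c_{i_k}$ and $(\sqrt{\mathbf c})_{j_k}=0$. Hence the $\{i_k,j_k\}\times\{i_k,j_k\}$ block of $DMD$, with rows and columns ordered so that $i_k$ precedes $j_k$, is $\begin{psmallmatrix} 0 & M_{i_k j_k}t^{c_{i_k}} \\ -M_{i_k j_k}t^{c_{i_k}} & 0\end{psmallmatrix}$, and $DMD$ vanishes off these blocks. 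Second, conjugation by $s$ acts blockwise: by construction $s$ contains the transposition $(i_k\,j_k)$ exactly when $M_{i_k j_k}=-1$, and conjugating $\begin{psmallmatrix} 0 & -u \\ u & 0\end{psmallmatrix}$ by $\begin{psmallmatrix} 0 & 1 \\ 1 & 0\end{psmallmatrix}$ returns $\begin{psmallmatrix} 0 & u \\ -u & 0\end{psmallmatrix}$. Thus whether $M_{i_k j_k}$ equals $+1$ (the block is untouched) or $-1$ (the block is conjugated by the swap), the $\{i_k,j_k\}$ block of $g_w^T J g_w = s(DMD)s$ equals $\begin{psmallmatrix} 0 & t^{c_{i_k}} \\ -t^{c_{i_k}} & 0\end{psmallmatrix}$, and all entries outside these blocks are $0$.

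Finally I would compare with $h^{\mathsf{sk}}_w$: the affine permutation matrix of $w=\bar w\tau^{\mathbf c}$ carries $t^{c_{i_k}}$ in position $(j_k,i_k)$ and $t^{c_{j_k}}=t^{c_{i_k}}$ in position $(i_k,j_k)$, and negating the entries strictly below the diagonal turns its $\{i_k,j_k\}$ block into exactly $\begin{psmallmatrix} 0 & t^{c_{i_k}} \\ -t^{c_{i_k}} & 0\end{psmallmatrix}$, with every other entry zero; so $g_w^T J g_w$ and $h^{\mathsf{sk}}_w$ agree block by block, which is the claim. The only point requiring genuine care is that $D$ and $s$ need not commute (indeed $D$ is typically not constant on the two-element orbits of $s$), so one must apply the two copies of $D$ around $M$ first and only afterwards conjugate by $s$; the sign bookkeeping in that last conjugation is precisely what the definition of $s$ via the signs of $\sigma_w^T J \sigma_w$ is designed to absorb, and I expect it to be the only real obstacle, the rest being formal manipulation of monomial matrices.
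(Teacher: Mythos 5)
Your proof is correct. The paper does not actually supply an argument for this lemma (it only records a worked numerical example before stating it), so your direct verification fills a genuine gap, and it does so in the natural way suggested by the paper's structure: use the multiplicativity of the identification $\widetilde S_{2n}^+\leftrightarrow\{\text{affine permutation matrices}\}$ to write $g_w^TJg_w = s\,D\,(\sigma_w^{-1}J\sigma_w)\,D\,s$ with $D=\tau^{\sqrt{\mathbf c}}$, observe that $M:=\sigma_w^{-1}J\sigma_w$ is skew-symmetric with support exactly on the positions $(i_k,j_k),(j_k,i_k)$ and entries $\pm1$, use the symmetry of the affine permutation matrix of $w$ to get $c_{i_k}=c_{j_k}$ (so $D_{i_ki_k}=t^{c_{i_k}}$, $D_{j_kj_k}=1$), compute each $2\times 2$ block of $DMD$, and then check that conjugation by $s$ flips exactly the blocks where $M_{i_kj_k}=-1$, normalizing every block to $\begin{psmallmatrix} 0 & t^{c_{i_k}} \\ -t^{c_{i_k}} & 0\end{psmallmatrix}$. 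That agrees entry-by-entry with $h_w^{\mathsf{sk}}$, whose $(i_k,j_k)$ and $(j_k,i_k)$ entries are $t^{c_{i_k}}$ and $-t^{c_{i_k}}$ by the definition (negate entries strictly below the diagonal, and $i_k<j_k$). The one point you flag as delicate --- that $D$ and $s$ need not commute, so one must conjugate $M$ by $D$ before conjugating by $s$ --- is handled correctly by your factorization, and your identification of the role of $s$ as precisely the sign-correcting permutation built into Definition~\ref{def-gw-sp} is exactly right.
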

For $w \in \IIfpf$, define $\mathcal{O}_w = \Sp_{2n}(F) g_w B = Kg_wB$.
Recall that Theorem~\ref{sp-orbit-thm-intro} is equivalent to stating that
the map 
$h_w^{\mathsf{sk}} \mapsto \mathcal{O}_w $
is a bijection between $\skewsym_{2n}$ and  $ K \backslash G/B$.

\begin{proof}[Proof of Theorem~\ref{sp-orbit-thm-intro}]
The proof is similar to the $\O_n$-case.

The quotient $K\backslash G$ is in bijection with the set $ \{g^TJg : g \in G\}$ via the map $Kg \mapsto g^TJg$. The set $ \{g^TJg : g \in G\}$ contains all the skew-symmetric matrices in $G$, and is closed under the right $B_{2n}$-action $(g^TJg)\cdot b := b^Tg^TJgb$, which commutes with the right $B$-action $Kg\cdot b := Kgb$ via the bijection.

Given $h = g^TJg$ where $g \in G$, we consider the non-zero upper-triangular entries in $h$ with the least order. Among these least order entries, let $(i,j)$ be the position of the one that is leftmost and uppermost along its column and row. Denote the $(i,j)$-entry of $h$ by $h_{ij}$.
In other words, all the entries above $h_{ij}$ have strictly greater order, and all the entries below have an order that is not less than that of $h_{ij}$.

It always holds that $i \neq j$ since $h$ is skew-symmetric and $h_{ii} = 0$ for $1\leq i \leq n$. Without loss of generality, assume $i > j$ since $h_{ij} = h_{ji}$. By a similar right action of a diagonal matrix, we can reduce $h_{ij}$ and $h_{ji}$ to be powers of $t$. Applying the right actions of elementary matrices as described in the proof of Theorem~\ref{ortho-orbit-thm-intro}, we can eliminate all the entries in the $i$-th and $j$-th columns and rows except $h_{ij} = h_{ji}$. 
We can further conjugate by a diagonal matrix so that the resulting matrix contains $t^{\ord(h_{ij})}$ as the only non-zero entry in the $i$-th and $j$-th columns.

By applying the above procedure $n$ times, we can conclude that there exists $b \in B$ such that $b^Thb$ is a fixed-point-free extended affine twisted involution, with equal powers of $t$ reflected along the main diagonal. Write $h = g^TJg$ for some $g\in G$, and denote the fixed-point-free extended affine twisted involution $b^Thb$ as $w_g$, such that $h = h^{\mathsf{sk}}_{w_g} \in \skewsym_{2n}$.  

The above construction proves the existence of $w_g$ for $K g B$ with $g \in G$, and the construction of $g_w$ given before the theorem provides the desired property of $g_w$ for a given fixed-point-free extended affine twisted involution. Thus, we have proven that the map $w \mapsto \mathcal{O}_w$ is a surjection.

It remains to show that the map is an injection. Suppose $\mathcal{O}_w = \mathcal{O}_{w'}$ for some $w, w' \in \IIfpf$. A argument similar to  the proof of Theorem~\ref{ortho-orbit-thm-intro} applies. We observe that 
\[ h^{\mathsf{sk}}_{w'} \in \mathcal{O}_{w'} = \mathcal{O}_{w} = \{b^Th_wb : b \in B\}.\] 
Therefore, we have $h^{\mathsf{sk}}_{w'} = b^Th^{\mathsf{sk}}_wb$ for some $b \in B$. By suitable multiplications of diagonal matrices on the left of $h^{\mathsf{sk}}_{w'}$ and $h^{\mathsf{sk}}_w$ respectively, we have $w' = b'wb$, where $b' \in B'$, the opposite Iwahori subgroup. 
Similar to Lemma~\ref{uniq-orbit-ortho}, we have $w' = w$ and hence $h^{\mathsf{sk}}_w = h^{\mathsf{sk}}_{w'}$ by the affine Bruhat decomposition \cite[Theorem 2.16]{IM}. 
Therefore, elements $h^{\mathsf{sk}}_w$ in $\skewsym_{2n}$ give rise to distinct double cosets $K g_w B$.
\end{proof}

\addcontentsline{toc}{section}{Bibliography}
\printbibliography

\end{document}